\let\mathcal\mathscr
\numberwithin{equation}{section}
\newtheorem{theorem}{Theorem}[section] 
\newtheorem{lemma}[theorem]{Lemma}
\newtheorem{proposition}[theorem]{Proposition}
\theoremstyle{definition}
\newtheorem*{acknowledgements}{Acknowledgements}
\newtheorem{remark}[theorem]{Remark}
\newcommand{\eul}{\mathrm{e}}
\renewcommand{\phi}{\varphi}
\newcommand{\0}{\mathbf{0}}
\newcommand{\PP}{\mathbb{P}}
\newcommand{\ZZ}{\mathbb{Z}}
\newcommand{\ZZp}{\mathbb{Z}_{\mathrm{prim}}}
\newcommand{\NN}{\mathbb{N}}
\newcommand{\QQ}{\mathbb{Q}}
\newcommand{\RR}{\mathbb{R}}
\newcommand{\CC}{\mathbb{C}}
\newcommand{\cA}{\mathcal{A}}
\newcommand{\cD}{\mathcal{D}}
\newcommand{\cM}{\mathcal{M}}
\newcommand{\cP}{\mathcal{P}}
\renewcommand{\leq}{\leqslant}
\renewcommand{\geq}{\geqslant}
\renewcommand{\bar}{\overline}
\newcommand{\x}{\mathbf{x}}
\newcommand{\y}{\mathbf{y}}
\renewcommand{\c}{\mathbf{c}}
\renewcommand{\v}{\mathbf{v}}
\newcommand{\w}{\mathbf{w}}
\renewcommand{\b}{\mathbf{b}}
\renewcommand{\r}{\mathbf{r}}
\newcommand{\fo}{\mathfrak{o}}
\newcommand{\fa}{\mathfrak{a}}
\newcommand{\fb}{\mathfrak{b}}
\newcommand{\fc}{\mathfrak{c}}
\newcommand{\fd}{\mathfrak{d}}
\newcommand{\fe}{\mathfrak{e}}
\newcommand{\fp}{\mathfrak{p}}
\newcommand{\ft}{\mathfrak{t}}
\newcommand{\fr}{\mathfrak{r}}
\newcommand{\fS}{\mathfrak{S}}
\newcommand{\fA}{\mathfrak{A}}
\newcommand{\ve}{\varepsilon}
\DeclareMathOperator{\rank}{rank}
\DeclareMathOperator{\disc}{disc}
\DeclareMathOperator{\Pic}{Pic}
\DeclareMathOperator{\vol}{vol}
\DeclareMathOperator{\res}{Res}
\DeclareMathOperator{\n}{N}
\DeclareMathOperator{\moo}{mod} 
\renewcommand{\bmod}[1]{\,(\moo{#1})}
\newcommand{\Zp}{\mathbb{Z}_{\text{prim}}}
\let\emptyset\varnothing
\DeclareSymbolFont{bbold}{U}{bbold}{m}{n}
\DeclareSymbolFontAlphabet{\mathbbold}{bbold}
\newcommand{\md}[1]{  \left(\textnormal{mod}\ #1\right)}
\newcommand{\lab}{\label} 
\newcommand{\Q}{\mathbb{Q}}
\newcommand{\N}{\mathbb{N}}
\newcommand{\R}{\mathbb{R}}
\newcommand{\Z}{\mathbb{Z}}
\renewcommand{\l}{\left}
\renewcommand{\r}{\right}
\renewcommand{\b}{\mathbf}
\renewcommand{\c}{\mathcal}
\renewcommand{\epsilon}{\varepsilon}
\renewcommand{\leq}{\leqslant}
\renewcommand{\geq}{\geqslant}
\renewcommand{\#}{\sharp}
\title[Rational points on quartic del Pezzo surfaces]{Counting rational points on quartic del Pezzo surfaces with a rational conic}
\author{T. D. Browning}
\address{School of Mathematics\\
University of Bristol\\ Bristol\\ BS8 1TW\\ UK}
\email{t.d.browning@bristol.ac.uk}
\author{E. Sofos}
\address{Max Planck Institute for Mathematics \\ 
Vivatgasse 7\\
Bonn \\ 
53111 \\ 
Germany} 
\email{sofos@mpim-bonn.mpg.de}
\subjclass[2010]{11G35 (11G50, 14G05)}
\date{\today}
\begin{document}

\begin{abstract}
Upper and lower bounds, of the expected order of magnitude,
are obtained for the number of rational points of bounded height on any quartic del Pezzo  surface over $\QQ$ that contains a conic defined over $\QQ$.  
\end{abstract}

\maketitle

\setcounter{tocdepth}{1}
\tableofcontents
 
\section{Introduction}\label{s:intro}

A quartic del Pezzo surface $X$ over $\QQ$ is a smooth projective surface
in $\PP^4$ cut out by a pair of quadrics  
 defined over $\QQ$.  When  $X$ contains a conic defined over $\QQ$ it may be equipped with a 
 dominant  $\QQ$-morphism
$X\to \PP^1$,  all of whose fibres are conics, giving $X$ the structure of a conic 
bundle surface. Let $U\subset X$ be the  Zariski open set obtained by deleting the $16$ 
lines
from $X$ and consider the counting function
$$
N(B) = \# \{ x \in U(\QQ): H(x) \leq B  \} , 
$$ 
for $B\geq 1$, 
where $H$ is the standard height function on $\PP^{4}(\QQ)$. 
The Batyrev--Manin conjecture \cite{f-m-t} predicts the existence of a  constant $c\geq 0$ such that 
$N(B)\sim cB(\log B)^{\rho-1}$, as $B\to \infty$, where $\rho=\rank \Pic_{\QQ}(X)\leq 6$. 
To date,  as worked out by  de la Bret\`eche and Browning \cite{dp4-smooth},
the only example for which this conjecture has been settled is the surface
$$
x_0x_1-x_2x_3=x_0^2+x_1^2+x_2^2-x_3^2-2x_4^2=0,
$$
with Picard rank $\rho=5$.
For a general quartic del Pezzo surface the best upper bound we have is 
$N(B) =O_{\ve,X}(B^{\frac{3}{2}+\ve}),$ for any $\ve>0$,
which appears in forthcoming work of
Salberger.

In work presented at the conference ``Higher dimensional varieties and rational points'' at Budapest in 2001, Salberger noticed that one can get much better upper bounds for 
$N(B)$ when $X$ has a conic bundle structure over $\QQ$, ultimately showing   that 
$N(B)=O_{\ve,X}( B^{1 + \ve})$,
 for all $\ve>0$.
Leung \cite{fok} revisited Salberger's argument to promote the $B^\ve$ to an explicit power of $\log B$.
On the other  hand,  recent work of Frei, Loughran and Sofos
\cite[Thm.~1.2]{FS}   
provides
a lower bound for $N(B)$ of the predicted
order of magnitude  for any quartic del Pezzo surface over $\QQ$ with a $\QQ$-conic bundle structure and  Picard rank $\rho\geq 4$.
(In fact they have results over any number field and for conic bundle surfaces of any degree.)
Our main result goes further and shows that the expected  upper and lower bounds can be obtained for any conic bundle quartic del Pezzo surface over $\QQ$. 
\begin{theorem}
\label{th:upper}
Let $X$ be a  quartic del Pezzo surface defined over $\Q$, such that
$X(\QQ)\neq \emptyset$. 
If $X$ contains a conic defined over $\Q$ 
then there exist 
effectively computable 
constants $c_1,c_2, B_0>0$, depending on $X$, such that for all $B\geq B_0$ we have
$$
c_1B (\log B)^{\rho-1} \leq N(B)\leq c_2 B (\log B)^{\rho-1}.
$$
\end{theorem}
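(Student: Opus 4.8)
The plan is to exploit the conic bundle morphism $\pi\colon X\to\PP^1$ and count rational points on $U$ fibre by fibre. The anticanonical height on $X$ restricted to a smooth fibre $\pi^{-1}(t)$ behaves, up to bounded factors, like the height of the conic $Q_t$ over $\QQ$ coming from a suitable model; so the first step is to choose integral coordinates and a model of $\pi$ so that $N(B)$ is comparable (up to constants) to $\sum_{t}\#\{x\in Q_t(\QQ):H(x)\le B/\|t\|\}$, where $t=(t_0:t_1)$ ranges over primitive integer pairs with $\|t\|\le B$ and $Q_t$ is a ternary quadratic form whose coefficients are binary forms in $t_0,t_1$ of bounded degree. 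The main input for the individual fibres is the classical asymptotic for points of bounded height on a conic with a rational point: the number of such points is $\asymp B/\|t\|$ times a density factor $\sigma(Q_t)$ that is a product of local densities, and is $0$ exactly when $Q_t$ has no $\QQ$-point. This reduces the upper and lower bounds for $N(B)$ to understanding the average over $t$ of $\sigma(Q_t)$ weighted by whether the fibre is everywhere locally soluble.

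The second step is therefore to analyse the arithmetic function $t\mapsto\sigma(Q_t)$. The local density at a prime $p$ depends only on $t\bmod p^{k}$ for a bounded $k$, and is $1+O(1/p)$ away from the finitely many primes of bad reduction and away from the primes dividing the discriminant locus $\Delta(t)=0$ (a binary form of bounded degree, whose irreducible factors over $\QQ$ control the splitting behaviour). Expanding $\sigma(Q_t)$ as a Dirichlet-type convolution and summing over $t$ in boxes, I would obtain $\sum_{\|t\|\le T}\sigma(Q_t)\asymp T^2(\log T)^{\rho-1}$, where the power of the logarithm is produced exactly by the primes dividing $\Delta(t)$ at which the fibre splits: each such "split" irreducible factor contributes one logarithm, and a standard Tauberian/hyperbola argument identifies the total number of them with $\rho-1$ via the formula relating $\Pic_\QQ(X)$ to the Galois action on the geometric components of the bad fibres (the conic bundle with $r$ geometric singular fibres has $\rho=2+(\text{number of }\Gal\text{-orbits that split})$, and this is $\le 6$). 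For the upper bound one needs the matching upper estimate for the same sum, which follows from positivity of $\sigma$ together with a crude divisor-type bound; for this it is convenient to first pass to a smooth proper model and control the boundary (the $16$ lines) separately, since points on $U$ are exactly those not lying on a degenerate component.

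Carrying out the summation over $t$ with a genuine main term — not just the correct order — would require a level-of-distribution input beyond what is needed here, so instead I would prove the two-sided bound directly: the lower bound by restricting $t$ to an arithmetic progression on which all the relevant local conditions are satisfied and all split factors of $\Delta(t)$ are forced to have many prime divisors (a sieve lower bound of Brun type, or the fibration method of Frei–Loughran–Sofos as in \cite[Thm.~1.2]{FS}), and the upper bound by the Salberger–Leung estimate $N(B)=O(B(\log B)^{?})$ refined so that the exponent is exactly $\rho-1$; the latter refinement comes from replacing Leung's crude bound for $\sum_{\|t\|\le T}\sigma(Q_t)$ by the sharp one above. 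The genuinely hard step is this last one: showing that the contribution of the split primes produces no more than $(\log B)^{\rho-1}$, which means one must rule out any extra logarithmic loss coming from the interaction between the height on the base, the height on the fibres, and the density $\sigma(Q_t)$ near the discriminant locus — in other words, one needs a uniform-in-$t$ version of the conic count with an error term good enough to survive summation, and a careful bookkeeping of which Galois orbits of singular fibres are split (contributing a logarithm) versus non-split (contributing none). I expect the main obstacle to lie there, in the uniformity of the fibrewise conic asymptotic and the precise logarithmic bookkeeping, rather than in any single fibre computation.
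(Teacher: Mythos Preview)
Your overall architecture is correct and matches the paper's: reduce to fibre-by-fibre counting on conics, identify the density factor with a product over primes dividing the discriminant $\Delta(t)$, and recognise that split versus non-split irreducible factors of $\Delta$ govern the exponent of the logarithm via $\rho=2+\delta_0$. Where the proposal falls short is precisely at the two places you flag as ``the hard step'', and in both cases the gap is not just one of detail but of method.

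For the upper bound, you write that the sharp estimate $\sum_{\|t\|\le T}\sigma(Q_t)\ll T^2(\log T)^{\rho-1}$ ``follows from positivity of $\sigma$ together with a crude divisor-type bound''. This is exactly what Leung does, and it gives the exponent $1+\delta_1$, not $1+\delta_0=\rho-1$. The refined conic bound the paper uses (Lemma~\ref{detectors 2}) produces, in place of $\tau(\Delta(t))$, a factor
\[
\prod_{i\le m}\tau_i(s,t)\;\prod_{i>m}\sum_{d\mid \Delta_i(s,t)}\Bigl(\tfrac{G_i(s,t)}{d}\Bigr),
\]
where the $G_i$ are binary forms of \emph{positive} even degree attached to the non-split closed points. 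So the sum you must control is a divisor sum twisted by Jacobi symbols whose modulus varies with $(s,t)$; neither a Dirichlet convolution over fixed characters nor any form of Nair--Tenenbaum for $\ZZ$ applies. The paper's device (Lemma~\ref{lem:reuss}) is to pass to the number field $K_i=\QQ(\theta_i)$ cut out by $\Delta_i$, rewrite the inner sum as $\sum_{\fa\mid(b_is-\theta_it)}\psi_i(\fa)$ for a \emph{fixed} Hecke character $\psi_i$ on $K_i$, and then invoke a Nair--Tenenbaum theorem over number fields (Lemma~\ref{t:NT}, proved in the companion paper~\cite{ANT-NT}). Without this translation you have no mechanism for the non-split factors to contribute only $O(\log x)$ each rather than $O((\log x)^2)$.

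For the lower bound, citing \cite{FS} is right for $\rho\ge 4$ and, via the complexity bound, for $\rho=3$; but for $\rho=2$ every degenerate fibre is non-split and \cite{FS} reduces the problem to a divisor-sum conjecture that is \emph{not} yet known. A Brun-type sieve does not give a lower bound of the right order here because one needs a genuine main term for a quartic divisor sum with all characters nontrivial (your ``arithmetic progression'' trick forces the characters to be trivial on the restricted set, but then the sum over the remaining divisors is too short to produce the required logarithm). The paper handles this via a hyperbola decomposition and a second-moment estimate for a generalised Hooley $\Delta$-function $\Delta(\fa;\psi_K)$ over $K$, supplied by \cite{sofos4hooley}; this is the content of \S\ref{s:lower} and is the other substantial new ingredient.
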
 
It is worth emphasising that this appears to be the first time that 
sharp bounds are achieved 
towards the Batyrev--Manin conjecture for del Pezzo surfaces 
that are not necessarily rational over $\QQ$. 

Let $X$ be a  quartic del Pezzo surface 
defined over $\Q$, with a conic bundle structure
$\pi: X\to \PP^1$. 
There are $4$ degenerate geometric  
fibres of $\pi$ and it follows from work of 
Colliot-Th\'el\`ene \cite{ct-4} and Salberger \cite{s-conic}, using 
independent approaches, that 
the Brauer--Manin obstruction is the only obstruction to the Hasse principle and weak approximation. 
Let $\delta_0\leq \delta_1\leq 4$, where $\delta_1$ is the number of closed points in $\PP^1$ above which $\pi$ is degenerate and 
$\delta_0$ is the number of these with split fibres.
(Recall from \cite[Def.~0.1]{S96} that a scheme over $\QQ$ is called {\em split} if it contains 
a non-empty geometrically integral open subscheme.)
It follows from  \cite[Lemma 2.2]{FS} that
\begin{equation}\label{eq:rank}
\rho=2+\delta_0.
\end{equation}
For comparison,  Leung's work \cite[Chapter~4]{fok} establishes an upper bound for $N(B)$ with the potentially larger exponent $1+\delta_1$.
This exponent agrees with the Batyrev--Manin conjecture if and only if $X\to \PP^1$ is
a conic bundle with a section over $\QQ$, a hypothesis that our main result 
avoids. 

Our proof of the upper bound makes essential use of~\cite{sofos},  
where detector functions are worked out for the fibres with $\QQ$-rational points.
Combining this with height machinery and a  uniform estimate \cite{n-2} for the number of rational points of bounded height on a conic, the  problem is reduced  to  finding optimal upper bounds  for
divisor sums of the shape
\begin{equation}\label{eq:goat}
\sum_{\substack{(s,t) \in \Z^2\\ \max\{|s|,|t|\}\leq x }}
\prod_{i=1}^n
\sum_{\substack{d_i \mid \Delta_i(s,t)}}
\left(
\frac{G_i(s,t)}{d_i}
\right).
\end{equation}
Here,  $n=\delta_1$ and 
$\Delta_1,\dots,\Delta_n\in \Z[s,t]$ 
are the closed points of $\PP^1$ above which $\pi$ is degenerate, 
with  $G_1,\dots,G_n\in \Z[s,t]$ being certain associated forms of even degree.
Thus far, such sums have only been examined in the special case that 
$G_1,\dots,G_n$ all have degree zero.  
In this setting, 
work of la Bret\`eche and Browning \cite{nair}
 can be invoked to yield the desired upper bound. Unfortunately, this result 
 is no longer applicable when one of $G_1,\dots,G_n$ has positive degree.

Using~\cite{FS}, we shall see in \S \ref{s:lower} that  
our proof of the lower bound in Theorem~\ref{th:upper} may proceed for surfaces $X\to \PP^1$ of Picard rank $\rho=2$. 
In this case  the fibre above any  degenerate  closed point of $\PP^1$ must be non-split by \eqref{eq:rank}.
Ultimately, following the strategy of \cite{FS}, this leads to the problem of proving tight lower bounds for sums like \eqref{eq:goat} in the special case that none of the characters $(\frac{G_i(s,t)}{\cdot})$ are trivial.  One of the key ingredients in this endeavour is a generalised Hooley $\Delta$-function.  
Let $K/\QQ$ be a number field
and let $\psi_K$ be a  quadratic 
Dirichlet character on $K$. We define an arithmetic function 
on integral ideals of $K$ via
$$
\Delta(\fa;\psi_K)=
\sup_{\substack{u \in \R\\ 0\leq v\leq 1}}
\Big|
\sum_{\substack{
\fd\mid \fa \\
\eul^u<\n_K\fd\leq \eul^{u+v} 
}}
\psi_K(\mathfrak{d})
\Big|,
$$
for any ideal $\fa$ in the ring of integers $\fo_K$ of  $K$, where $\n_K $ denotes the ideal norm.
When $K=\QQ$ this recovers the twisted  $\Delta$-function considered  
by  la Bret\`eche--Tenenbaum \cite{daniel-F} and Br\"udern \cite{daniel-D}.  
Our treatment of the lower bound requires a 
second moment estimate for 
$\Delta(\fa;\psi_K)$ and this is supplied in a companion paper of Sofos \cite{sofos4hooley}.

\begin{remark}
Ch\^atelet surfaces  provide the other family  of  relatively minimal conic bundle surfaces
of degree $4$.  When they are defined over $\QQ$,   the Batyrev--Manin conjecture also makes a prediction for the distribution of $\QQ$-rational points on them.  Work of Browning  \cite{chatelet} shows that the relevant counting function satisfies an upper bound of  the expected size.
Although we shall not provide any details here, if we 
suppose that the Ch\^atelet surface has a $\QQ$-rational point,
then a  lower bound of the proper size follows from the work in this paper, on taking the forms $G_1,\dots,G_n$ to have degree $0$ in \eqref{eq:goat}. 
\end{remark}

The main novelty in our work lies in how we overcome the difficulty of divisor sums involving characters without a fixed modulus in \eqref{eq:goat}.
In~\S\ref{s:sausage}, drawing inspiration from recent work of Reuss~\cite{reuss}, we replace the divisor functions
at hand by generalised divisor functions  which run over certain integral ideal  divisors belonging to the number field obtained by adjoining a root of $\Delta_i$, for each $1\leq i\leq n$.   
Our proof of Theorem \ref{th:upper}  then relies upon an extension to number fields of work by Nair and Tenenbaum \cite{NT} on short sums of non-negative  arithmetic functions.   This is achieved in an auxiliary investigation \cite{ANT-NT}, the outcome of which is recorded in \S \ref{s:Mk}.

\begin{acknowledgements}
We are very grateful to Roger Heath-Brown for useful discussions 
and to the anonymous referee for numerous helpful comments that have clarified the exposition considerably. 
While working on this paper the first author was
supported by ERC grant \texttt{306457}. 
\end{acknowledgements}

\section{Preliminary results}

\subsection{Nair--Tenenbaum over number fields}\label{s:Mk}

Let $K/\QQ$ be a number field and let $\fo_K$ be its ring of integers.
Denote by $\c{I}_K$ the set of ideals in $\fo_K$. 
We say that a function $f:\c{I}_K\to \RR_{\geq 0}$ 
is {\em pseudomultiplicative} 
if there exist strictly positive constants $A,B,\ve$ such that   
$$
f(\fa \fb)
\leq 
f(\fa)
\min\left\{A^{\Omega_K(\fb)}, B(\n_K\fb)^\ve\right\},
$$
for all coprime ideals $\fa,\fb\in \c{I}_K$, where
$
\Omega_{K}(\fb)=\sum_{ \fp \mid  \fb}
\nu_\fp(\fb).
$
We denote the class of all pseudomultiplicative functions associated to $A,B$ and $\ve$  by  $\cM_K=\cM_K(A,B,\ve)$. 
Note that any $f\in \c{M}_K$ satisfies the bounds
$f(\fa) \ll A^{\Omega_K(\fa)}$ and 
$f(\fa) \ll
(\n_K \fa)^\varepsilon$,  for any  $\fa\in \c{I}_K$.

We will need to work with functions supported away from  ideals of small norm.  To facilitate this, for any ideal $\fa\in \c{I}_K$ and $W\in \NN$, we set
\begin{equation}\label{eq:aW}
\fa_W=\prod_{\substack{\fp^\nu \| \fa \\ \gcd(\n_K \fp,W)=1}}
\fp^\nu.
\end{equation}
We extend this to rational integers in the obvious way. 
Similarly,  for any $f\in \c{M}_K$, we define  
$
f_W(\fa)=f(\fa_W).
$
\begin{remark} 
We will always assume that $W$ is of the form
\begin{equation}\label{eq:WW}
W=\prod_{p\leq w} p^\nu,
\end{equation}
for some $w>0$ 
and $\nu$ a positive integer. Throughout~\S\ref{s:lower} we shall take $\nu$ to be a large constant depending only
on various polynomials that are determined  by $X$,  while in~\S\ref{s:epikoskafes} we shall take $\nu=1$. 
In either case we have  $\gcd(\n_K\fp,W)=1$ if and only if $p>w$, if $\n_K\fp=p^{f_\fp}$ for some $f_\fp\in \NN$.
Our notation is reminiscent of the ``$W$-trick'' that appears in work of Green and Tao \cite{GT}. Whereas in their context it is important that the parameter $w$ tends to infinity, in our setting we shall choose $w$ to be a suitably large constant, where the meaning of ``suitably large'' is allowed to change at various points of the proof.
\end{remark}

Let
\begin{equation}\label{eq:span-0}
\!\cP_K^\circ\!=\!
\left\{\fa\subset \fo_K: 
\fp\mid \fa \Rightarrow f_{\fp}=1
\right\}
\end{equation}
be the
multiplicative span of all prime ideals $\fp\subset \fo_K$ with residue degree $f_\fp=1$. 
For any  $x>0$
and  $f\in \c{M}_K$ we 
 set  
$$
E_{f}(x;W)=\exp\Bigg(
\sum_{\substack{
\fp\in \cP_K^\circ \text{ prime}\\
w<\n_K 
\fp \leq x
\\
f_\fp=1 }}\frac{f(\fp)}{\n_K\fp}\Bigg),
$$
if $f$ is  submultiplicative, and 
$$
E_{f}(x;W)=
\sum_{\substack{
\n_K\fa  \leq x\\
\fa\in \cP_K^\circ  \text{ square-free} \\
\gcd(\n_K \fa,W)=1
}}\frac{f(\fa)}{\n_K\fa},
$$
otherwise.

Suppose now that we are given irreducible binary forms $F_1,\dots,F_N\in \ZZ[x,y]$, which we assume to be  pairwise coprime. 
Let $i\in \{1,\dots,N\}$. Suppose that $F_i$ has degree $d_i$
and that it is not proportional to $y$, so that  $b_i=F_i(1,0)$ is a non-zero integer.
It will be convenient to form the homogeneous polynomial
\begin{equation}\label{eq:burger}
\tilde F_i(x,y)=b_i^{d_i-1}F_i(b_i^{-1}x,y).
\end{equation}
This has integer coefficients and satisfies $\tilde F_i(1,0)=1$.
We  let $\theta_i$ be a root of the monic polynomial $\tilde F_i(x,1)$.
Then $\theta_i$ is an algebraic integer and we denote the  associated number field of degree $d_i$ by 
   $K_i=\QQ(\theta_i)$.
Moreover,  
$$N_{K_i/\QQ}(b_is-\theta_it)=\tilde F_i(b_is,t)=b_i^{d_i-1}F_i(s,t),$$
for any $(s,t)\in \ZZ^2$.
If  $b_i=0$, so that  $F_i(x,y)=c y$ for some non-zero $c \in \ZZ$,
we take  $\theta_i=-c$ and $K_i=\Q$ in this discussion.
Our work on Theorem \ref{th:upper} requires tight upper bounds for averages of 
$f_{1,W}((b_1s-\theta_1 t))\dots f_{N,W}((b_Ns-\theta_N t))$, over primitive vectors  $(s,t)\in \ZZ^2$, 
for general pseudomultiplicative functions $f_i\in \c{M}_{K_i}$ and suitably large  $w$. 

For any $k\in \NN$ and any polynomial $P\in \ZZ[x]$, we set 
\begin{equation}\label{eq:def-rho}
\rho_{P}(k)=\#\{x\bmod{k} : P(x)\equiv 0 \bmod{k}\}.
\end{equation}
Let 
$\bar\rho_i(k)=
\rho_{F_i(x,1)}(k)$  if $F_i(1,0)\neq 0$ and 
$\bar\rho_i(k)=1$ if $F_i(1,0)= 0$.
Moreover, put
\begin{equation}\label{eq:faggot}
h^*(k)=\prod_{p\mid k} \left(1-\frac{\bar\rho_1(p)+\dots +\bar\rho_N(p)}{p+1}\right)^{-1}.
\end{equation} 
To  any non-empty  bounded measurable  region
$\c{R} \subset \R^2$, 
we associate
\begin{equation*}
K_\c{R}=1+
\|\c{R}\|_{\infty}
+\partial(\c{R})
\log(1+\|\c{R}\|_\infty)
+\frac{\mathrm{vol}(\c{R})}{1+\|\c{R}\|_\infty}
,\end{equation*}
where $\|\c{R}\|_\infty= \sup_{(x,y) \in \c{R}}\{ |x|,|y|\}$.
We say that such a region $\c{R}$ is {\em regular} if 
its boundary  is piecewise differentiable,  $\c{R}$ contains no 
zeros of 
$F_1\cdots F_N$  and 
 there exists
$c_1>0$ such that 
$\mathrm{vol}(\c{R})\geq K_\c{R}^{c_1}$. 
Bearing all of this in mind, the following result is  \cite[Thm.~1.1]{ANT-NT}. 

\begin{lemma}\label{t:NT}
Let $\c{R} \subset \R^2$ be a regular region,
let    $V=\mathrm{vol}(\c{R})$ 
and let $G \subset \Z^2$ be a lattice of full rank, with  determinant $q_G$ and  first successive minimum  $\lambda_G$.
Assume that  $q_G \leq V^{c_2}$ for some $c_2>0$.
Let  $f_i \in \c{M}_{K_i}(A_i,B_i,\ve_i
)$ for $1\leq i\leq N$
and let 
$$\ve_0= 
\max\bigg\{1+\frac{4}{c_1},\frac{4(5+3
\max\{\ve_1,\dots,\ve_N\}) 
}{c_1}\bigg\}
\bigg(\sum_{i=1}^N d_i \ve_i\bigg)
.$$ 
Then, for any $\ve>0$ and $w>w_0(f_i,F_i,N)$, we have 
\begin{align*}
\sum_{(s,t) \in \Zp^2 \cap \c{R}\cap G}
\prod_{i=1}^N
f_{i,Wq_G}((b_i s -\theta_i t))
\hspace{-0.08cm}
\ll~&
\frac{V}{(\log V)^N}
\frac{h_W^*(q_G)
}{q_G}
\prod_{i=1}^N
E_{f_i}(V;W)\\
&+\frac{K_{\c{R}}^{1+\ve_0+\ve}}{\lambda_G} 
,\end{align*}
where the implied constant depends at most on
$c_1,c_2,A_i,B_i,F_i, \ve,  
\ve_i, N,W$.
\end{lemma}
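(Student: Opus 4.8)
Since the statement is the main result of the companion paper \cite{ANT-NT}, I only sketch the strategy I would follow. The starting point is the norm-form identity $N_{K_i/\QQ}(b_i s - \theta_i t) = b_i^{d_i-1}F_i(s,t)$ recorded above, which identifies the principal ideal $\fc_i = (b_i s - \theta_i t) \subset \fo_{K_i}$ with one whose norm is a fixed multiple of $|F_i(s,t)|$; a prime-ideal divisor $\fp^\nu \mid \fc_i$ of residue degree one then corresponds, prime by prime, to a congruence condition on $(s,t)$ modulo a power of $\n_{K_i}\fp$. Exploiting the two bounds $f_i(\fa) \ll A_i^{\Omega_{K_i}(\fa)}$ and $f_i(\fa) \ll (\n_{K_i}\fa)^{\ve_i}$, the first step is to discard, at the cost of an $O(1)$ factor, the contribution of prime-ideal powers of residue degree $\ge 2$ to each $f_{i,Wq_G}(\fc_i)$ --- permissible because the $Wq_G$-truncation has already removed the primes of small norm --- so that thereafter one works only with the part of $\fc_i$ supported on $\cP^\circ_{K_i}$, which is exactly why $E_{f_i}(V;W)$ runs over degree-one primes.

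The core is a Nair--Tenenbaum ``smooth--rough'' decomposition performed simultaneously in all $N$ fields. Fixing $z$ a suitable power of $V$ and factoring $\fc_i = \fd_i\fe_i$ with $\fd_i$ the product of the prime-ideal powers of norm $\le z$ and $\fe_i$ the $z$-rough complement, pseudomultiplicativity gives $f_{i,Wq_G}(\fc_i) \le f_i(\fd_i)\min\{A_i^{\Omega_{K_i}(\fe_i)}, B_i(\n_{K_i}\fe_i)^{\ve_i}\}$; since $\n_{K_i}\fe_i$ is bounded by a power of $V$ while $\fe_i$ is $z$-rough, $\Omega_{K_i}(\fe_i) = O(1)$ and the minimum is $O(1)$. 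After a M\"obius inversion to reinstate the primitivity constraint $\Zp^2$, the problem reduces to estimating, for each admissible tuple $(\fd_1,\dots,\fd_N)$ --- equivalently for each modulus $q = \prod_i \n_{K_i}\fd_i$, which is at most a fixed power of $V$ --- the number of $(s,t) \in \cR\cap G$ lying in a prescribed union of residue classes modulo $q$ whose complementary factors $\fc_i/\fd_i$ are $z$-rough. I would evaluate this by combining a Brun-type upper-bound sieve, which imposes the $z$-roughness of the complementary factors, with a lattice-point count for the residue-class condition: the latter equals $\vol(\cR)/q^2$ times a product of local densities, with an error $O(K_{\cR}/\lambda_G)$ per class, $K_{\cR}$ being precisely the quantity controlling the discrepancy of $G \cap \cR$ from its area through the boundary length and diameter of $\cR$. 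Summing $\prod_i f_i(\fd_i)$ against the main terms and using Mertens' theorem for degree-one prime ideals reconstitutes $\prod_i E_{f_i}(V;W)$ together with the arithmetic factor $h_W^*(q_G)/q_G$ and a saving of one power of $\log V$ per form, which gives the stated $(\log V)^{-N}$.

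What remains --- and the origin of the explicit exponent $\ve_0$ --- is the careful accounting of the accumulated errors. Summing the per-class error $K_{\cR}/\lambda_G$ over all residue classes modulo all admissible moduli $q$, weighted by $\prod_i f_i(\fd_i) \ll q^{\max_i \ve_i}$, yields a total error of the shape $K_{\cR}^{1 + O(\sum_i d_i \ve_i)}/\lambda_G$; tracing the dependence of this exponent on the smoothness parameter, on the degrees $d_i$, on the growth exponents $\ve_i$, and on the regularity constant $c_1$ (the hypothesis $\vol(\cR) \ge K_{\cR}^{c_1}$ being what keeps the main term ahead of the intermediate error terms, and what bounds $\|\cR\|_\infty$ by a power of $V$) produces the displayed formula for $\ve_0$. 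I expect this error analysis --- required to be uniform in $\cR$, in $G$, and, most delicately, in the coefficients of $F_1,\dots,F_N$ and hence in the discriminants of $K_1,\dots,K_N$ --- to be the principal obstacle: one must run the fundamental-lemma sieve with fully explicit dependence on the level and the sifting range, and one must verify that no implied constant secretly depends on the $K_i$ beyond what the statement allows, which in turn pins down the admissible choice of $z$ and forces the particular value of $\ve_0$.
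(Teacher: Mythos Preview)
The paper does not prove this lemma at all: immediately before the statement it writes ``the following result is \cite[Thm.~1.1]{ANT-NT}'', and you correctly recognise this and offer a sketch in lieu of a proof. Your outline --- reduction to degree-one prime ideals, a simultaneous Nair--Tenenbaum smooth--rough factorisation $\fc_i=\fd_i\fe_i$ in each $K_i$, followed by a fundamental-lemma sieve combined with lattice-point counting in $\cR\cap G$ --- is a faithful high-level description of the method of the companion paper, and the mechanism you describe for the emergence of $h_W^*(q_G)/q_G$, the factor $(\log V)^{-N}$, and the products $E_{f_i}(V;W)$ is the right one.

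One small correction to your final paragraph: you flag uniformity ``in the coefficients of $F_1,\dots,F_N$ and hence in the discriminants of $K_1,\dots,K_N$'' as the principal obstacle, but the statement explicitly allows the implied constant to depend on the $F_i$ (and so on the $K_i$). The genuine delicacy in the error term is uniformity in the lattice $G$ and in $q_G$, together with keeping track of how the choice of the smoothness threshold $z$ (a power of $V$ depending on $c_1$ via the regularity hypothesis $V\ge K_{\cR}^{c_1}$) interacts with the bounds $f_i(\fd_i)\ll(\n_{K_i}\fd_i)^{\ve_i}$ when the per-class error is summed; this is what produces the displayed shape of $\ve_0$.
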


Let $1\leq i\leq n$.
In the statement of this result we recall the convention that 
the function $f_{i,W q_G}$ is defined in such a way that 
$f_{i,Wq_G}(\fa)=f_{i}(\fa_{W q_G})$ 
for  any integral ideal $\fa\subset \fo_{K_i}$, where
\[
\fa_{Wq_G}=
\prod_{\substack{\fp^\nu \| \fa \\ \gcd(\n_K \fp,W)=1
\\ \gcd(\n_K \fp,q_G)=1
}}
\fp^\nu
.\]

\subsection{Divisor sums over number fields}
\label{s:sausage}

Let  $K/\QQ$ be a finite extension of degree
$d$. We write  $\fo=\fo_K$ and $\n=\n_K$ for the 
ring of integers and ideal norm, respectively. 
Let   $\sigma_1,\dots,\sigma_d:K \hookrightarrow\CC$
be the associated 
embeddings and let  $\{\omega_1,\dots,\omega_d\}$
be a $\ZZ$-basis for $\fo$.
Let  $\fa\subset \fo$ be an integral ideal 
with  $\ZZ$-basis $\{\alpha_1,\dots,\alpha_d\}$. 
We henceforth set $\Delta(\alpha_1,\dots,\alpha_d)=|\det(\sigma_i(\alpha_j))|^2$,
and similarly for $\{\omega_1,\dots,\omega_d\}$.
According to \cite[Satz 103]{landau_alg}, we have 
\begin{equation}\label{eq:sainsburys}
\Delta(\alpha_1,\dots,\alpha_d)=(\n \fa)^2 D_K,
\end{equation} 
where $D_K=\Delta(\omega_1,\dots,\omega_d)$ is the  discriminant of $K$.

Let $F,G\in \ZZ[x,y]$ be non-zero binary forms with $F$ irreducible,
$G$ of even degree and non-zero resultant $\mathrm{Res}(F,G)$.
We shall assume that $F$ has degree $d$
and that it is not proportional to $y$.
In particular $b=F(1,0)$ is a non-zero integer.
Let $W\in \NN$.
For any $(s,t)\in \ZZp^2$ such that $F(s,t)\neq 0$,
we  define 
\begin{equation}\label{eq:def-hW}
h_W(s,t)=\sum_{\substack{k \mid F(s,t)\\ \gcd(k,W)=1}}\left(\frac{G(s,t)}{k}\right).
\end{equation}
This is a  modified version of the functions that appear in \eqref{eq:goat}.
We recall
from \eqref{eq:burger} the   associated binary form
$\tilde F(x,y)=b^{d-1}F(b^{-1}x,y)$,
with  integer coefficients and $\tilde F(1,0)=1$.  We conclude that for all non-zero integer multiples $c$ of $b$, we have 
$$
h_{cW}(s,t)=\sum_{\substack{k \mid \tilde F(bs,t)\\ \gcd(k,cW)=1}}\left(\frac{G(s,t)}{k}\right),
$$
since $k\mid \tilde F(bs,t) $ if and only if $k\mid F(s,t)$.

We henceforth let $\theta$ be a root of the polynomial $f(x)=\tilde F(x,1)$.
Then $\theta$ is an algebraic integer and   $K=\QQ(\theta)$ is a number field of degree $d$ over $\QQ$.   
It follows that 
$\ZZ[\theta]\subset \fo$ is an order
of $K$ with  discriminant 
$\Delta_\theta=\Delta(1,\theta,\dots,\theta^{d-1})$. In view of \eqref{eq:sainsburys} we have 
\begin{equation}\label{eq:tesco}
\Delta_\theta=[\fo:\ZZ[\theta]]^2D_K.
\end{equation}

We  now let $L=K(\sqrt{g(\theta)})$, 
where $g(x)=G(b^{-1}x,1)\in \QQ[x]$.
We shall assume that $L/K$ is a quadratic extension and we let $D_{L/K}$ be the ideal norm
of the relative discriminant $\mathfrak{D}_{L/K}$.  
Let  $\mathfrak{f}=\mathfrak{f}_{L/K}$
be the conductor of the extension $L/K$.
 Let $J^\mathfrak{f}$ be the group of fractional ideals in $K$ coprime to $\mathfrak{f}$
and let $P^\mathfrak{f}$ be the group of principal ideals
$(a)$ such that $a\equiv 1\bmod{\mathfrak{f}}$ and $a$ totally positive.
As explained by Neukirch \cite[\S VII.10]{neukirch}, 
the Artin symbol $\psi(\fa)=(\frac{L/K}{\fa})$ gives rise to a character $
\psi:J^\mathfrak{f}/P^\mathfrak{f}\rightarrow\{\pm 1\}
$
of the ray class group 
$J^\mathfrak{f}/P^\mathfrak{f}$, with $\fa\bmod{P^{\mathfrak{f}}}\mapsto (\frac{L/K}{\fa})$. This has the property that $\psi(\fp)=1$ 
if and only if $\fp$ splits in $L$, for any unramified prime  ideal $\mathfrak{p}\in J^\mathfrak{f}$.

Let 
\begin{equation}\label{eq:D}
D=2b D_{L/K} \Delta_\theta  
\n \mathfrak{f}.  
\end{equation}
Note that $D$ is a non-zero integer.  
Recall the definition \eqref{eq:span-0} of $\cP_K^\circ$  
of the multiplicative span  of degree $1$ prime ideals. 
We shall mainly  work with the subset
\begin{equation}\label{eq:span}
\!\cP_K\!=\!
\left\{\fa\subset \cP_K^\circ: 
\fp_1\fp_2\mid \fa \Rightarrow
\n_K\fp_1\neq 
\n_K\fp_2
 \text{ or }
\fp_1= \fp_2
\right\}
\end{equation}
cut out by ideals 
divisible by at most one prime ideal above each rational prime.
It is not hard to see that $\cP_K$ has positive density in $\mathcal{I}_K$.  
The proof of the following result is inspired by an argument found in recent work of Reuss \cite[Lemma 4]{reuss}.

\begin{lemma}\label{lem:reuss}
Let $W\in \NN$, 
let $(s,t)\in \Zp^2$ such that  $F(s,t) \neq 0$, and let 
$D$ be given by \eqref{eq:D}. 
Then the following hold:
\begin{itemize}
\item[(i)]
$\fa\in \cP_K$ for any integral ideal $\fa\mid (bs-\theta t)$
such that $\gcd(\n\fa,DW)=1$;
\item[(ii)]
there exists  a bijection between 
divisors $\fa\mid (bs-\theta t)$ with $\n \fa=k$ coprime to $DW$ and divisors $k\mid \tilde  F(bs,t)$ coprime to $DW$,  in which 
$\Omega(k)=\Omega_K(\fa)$ and $(\frac{G(s,t)}{k})=\psi(\fa)$;
\item[(iii)]
we have 
$$
h_{DW}(s,t)=\sum_{\substack{\fa\mid (bs-\theta t)\\ \gcd(\n\fa,DW)=1}} \psi(\fa).
$$
\end{itemize}
\end{lemma}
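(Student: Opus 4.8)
The plan is to analyze the factorization of the principal ideal $(bs-\theta t)$ in $\fo_K$ and match it up with the factorization of the integer $\tilde F(bs,t)=b^{d-1}F(s,t)$, working away from the bad primes dividing $DW$. The starting point is the norm identity $N_{K/\QQ}(bs-\theta t)=\tilde F(bs,t)$ recorded in~\S\ref{s:Mk}. First I would prove~(i): suppose $\fa\mid (bs-\theta t)$ with $\gcd(\n\fa,DW)=1$, and let $\fp\mid\fa$ be a prime ideal lying over the rational prime $p$. Since $p\nmid D$ and in particular $p\nmid \Delta_\theta$, the prime $p$ is unramified in $K$ and $\ZZ[\theta]$ is $p$-maximal, so the splitting of $p$ in $\fo_K$ is governed by the factorization of $\tilde F(x,1)\bmod p$ via the Dedekind--Kummer theorem. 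Because $\fp\mid (bs-\theta t)$, reduction shows $bs\equiv \theta t\bmod\fp$; combined with $p\nmid b$ (as $b\mid D$) this pins down $\theta\bmod\fp$ to a single residue class in $\FF_p$, whence $f_\fp=1$, giving $\fa\in\cP_K^\circ$. For the stronger condition defining $\cP_K$, if $\fp_1,\fp_2$ are distinct primes over the same $p$ both dividing $(bs-\theta t)$, the same congruence forces $\theta$ into the same residue class modulo each, contradicting that the distinct linear factors of $\tilde F(x,1)\bmod p$ are pairwise incongruent (using again $p\nmid\Delta_\theta$, so $\tilde F(x,1)$ is separable mod $p$). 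Hence at most one prime over each $p$ divides $(bs-\theta t)$, so $\fa\in\cP_K$.

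Next, (ii). The bijection $\fa\leftrightarrow k=\n\fa$ between divisors of $(bs-\theta t)$ and divisors of $\tilde F(bs,t)$ coprime to $DW$ is built prime-by-prime: for each $p\mid\tilde F(bs,t)$ with $p\nmid DW$, part~(i) shows there is a unique prime $\fp_p$ over $p$ dividing $(bs-\theta t)$, it has $f_{\fp_p}=1$ so $\n\fp_p=p$, and the $p$-adic valuation of $\tilde F(bs,t)=\n(bs-\theta t)\cdots$ matches $v_{\fp_p}(bs-\theta t)$ (all other primes over $p$ contribute nothing). Extending multiplicatively gives the claimed bijection, and since every prime $\fp_p$ involved has residue degree $1$ we get $\Omega(k)=\Omega_K(\fa)$ directly from $\Omega_K(\fa)=\sum_\fp v_\fp(\fa)$ and $k=\prod p^{v_{\fp_p}(\fa)}$. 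The identity $(\frac{G(s,t)}{k})=\psi(\fa)$ is the delicate point and I expect it to be the main obstacle: one reduces to prime $k=p$ (both sides are totally multiplicative in the relevant range, $\psi$ on $J^{\mathfrak f}$ and the Kronecker symbol with $\gcd(k,2G(s,t))=1$ since $2\mid D$ and... here one must also know $p\nmid \res(F,G)$, which follows from $D_{L/K}\mid D$ and $\n\mathfrak f\mid D$). For prime $p$ coprime to $D$, $\psi(\fp_p)=1$ iff $\fp_p$ splits in $L=K(\sqrt{g(\theta)})$, i.e. iff $g(\theta)$ is a square in the residue field $\fo_K/\fp_p\cong\FF_p$; but under the reduction $\theta\mapsto bs/t\pmod p$ one has $g(\theta)\equiv g(bs/t)=G(b^{-1}\cdot bs/t,1)=G(s/t,1)$, which is a square in $\FF_p^\times$ iff $(\frac{G(s,t)}{p})=1$ after clearing the (even-degree, hence square) denominator $t^{\deg G}$. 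Matching the two quadratic characters on $\FF_p^\times$ and checking the ramified/inert cases (where $\psi(\fp_p)=0$ or the conductor intervenes — but such $p$ divide $D$ and are excluded) completes it.

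Finally, (iii) is immediate from (ii): in the defining sum $h_{DW}(s,t)=\sum_{k\mid \tilde F(bs,t),\,\gcd(k,DW)=1}(\frac{G(s,t)}{k})$ (using that $b\mid D$, so the $cW$-form of $h$ from~\S\ref{s:sausage} with $c=D/W\cdot W$-compatible divisor applies and $k\mid\tilde F(bs,t)\Leftrightarrow k\mid F(s,t)$ in this range), replace each $k$ by the corresponding $\fa$ and each summand $(\frac{G(s,t)}{k})$ by $\psi(\fa)$. The hard part throughout is genuinely the character identity in~(ii); once the Dedekind--Kummer dictionary is set up cleanly and one is careful that every exceptional prime (ramified in $K$ or in $L/K$, dividing $b$, dividing $2$, dividing $\res(F,G)$, or dividing $\n\mathfrak f$) has been absorbed into $D$, the rest is bookkeeping.
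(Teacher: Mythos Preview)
Your proposal is correct and follows essentially the same route as the paper. The paper outsources part~(i) to \cite[Lemma~2.3]{ANT-NT}, but your direct Dedekind--Kummer argument is precisely what underlies that citation; for part~(ii) both you and the paper reduce the character identity to the prime case and compare $(\frac{G(s,t)}{p})$ with $\psi(\fp)$ via the splitting criterion for $\fp$ in $L=K(\sqrt{g(\theta)})$ under the residue-field identification $\theta\equiv bs\bar t\bmod\fp$, and part~(iii) is then immediate.
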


In particular, when $G(s,t)$ is the constant polynomial $1$ in  
\eqref{eq:def-hW}, then $L=K$ and $\psi$ is just the trivial character in part (iii).
We note that  $\Omega_K(\fa)=\Omega(\n \fa)$ 
and $\tau_K(\fa)=\tau(\n\fa)$ 
for any ideal $\fa\in \mathcal{P}_K$, 
where $\tau_K(\fa)=\sum_{\fd\mid \fa}1$.   
Similarly, if 
$h:\NN\to \RR_{\geq 0}$
 is any arithmetic function, we have 
$$
 \prod_{\fp\mid \fa} \left(1+h(\n\fp)\right)=\prod_{p\mid \n\fa} \left(1+h(p)\right),  
$$
for any $\fa\in \cP_K$.
 We shall use these facts without further comment in the remainder of the paper.

\begin{proof}[Proof of Lemma \ref{lem:reuss}]
Let $(s,t)\in \Zp^2$ such that  $F(s,t) \neq 0$. We form the integral ideal  
$\mathfrak{n}=(bs-\theta t).$
This has norm $\n \mathfrak{n}=|\tilde F(bs,t)|.$
Let $k\mid \tilde F(bs,t)$ with $\gcd(k,DW)=1$.   
In particular   $\gcd(k,\Delta_\theta)=1$. 

Part (i) is proved in  
\cite[Lemma~2.3]{ANT-NT}.  
Turning to part (ii), it follows from (i)  that  $(p,\mathfrak{n})$ is a prime ideal for any $p\mid k$. Thus there is a bijection between each factorisation
 $|\tilde F(bs,t)|=ke$, with $\gcd(k,DW)=1$,  and  each ideal factorisation 
$\mathfrak{n}=\mathfrak{a}\fb$, with
 $\n\fa=k$ coprime to $DW$ and $\n\fb=e$.	
 In order to complete the proof of part (ii) of the lemma, it will suffice to show that 
\begin{align*}
\left(\frac{G(s,t)}{p}\right)=\psi(\mathfrak{p}),
\end{align*}
where $\fp=(p,\mathfrak{n})$. 
Since $G$ has even degree we have 
$$
\left(\frac{G(s,t)}{p}\right)=\left(\frac{G(s\bar t,1)}{p}\right).
$$
Recall the notation $g(x)=G(b^{-1}x,1)$. 
We may suppose  that $\fp=(p,\theta-n)$, for some $n\in \ZZ/p\ZZ$
such that 
$bs\bar{t}-n\equiv 0\bmod{p}$, and we
 recall from \eqref{eq:D} that $p\nmid 2D_{L/K}$. 
We observe that  $\mathfrak{p}$ splits in $L=K(\sqrt{g(\theta)})$ if and only if $g(n)$ is a square in $\fo/\fp$, since $g(\theta)\equiv g(n)\bmod{\fp}.$
 But this is if and only if 
$$
\left(\frac{g(bs\bar t)}{p}\right)=1,
  $$
since $n\equiv bs\bar t\bmod{p}$ and $\n\fp=p$.
Noting that $g(bs\bar t)=G(s\bar t,1)$, this 
completes the proof of part (ii). Finally, part (iii) follows from part (ii).
\end{proof}

We close this section with an observation about the condition 
$\mathfrak{a}
\mid (b s-\theta t)$ that appears in 
Lemma \ref{lem:reuss}, the proof of which is 
found in  
 \cite[Lemma 2.4]{ANT-NT}.   
  
\begin{lemma}\label{lem:honey}  
Let $\fa\in \cP_K$ 
such that $\gcd(\n\fa,D_K)=1$.
Then
there exists 
$k=k(\fa)\in \ZZ$ such that 
$
\fa\mid (bs-\theta t) \Leftrightarrow 
bs\equiv k t\bmod{\n \fa}
$,
for all $(s,t)\in \ZZ^2$. 
\end{lemma}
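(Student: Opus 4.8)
The plan is to reduce the claim to a single prime-power congruence and then reassemble it via the Chinese remainder theorem; the hypothesis $\fa\in\cP_K$ is exactly what forces the reassembly to produce a congruence to the single modulus $\n\fa$. Write $\fa=\fp_1^{\nu_1}\cdots\fp_r^{\nu_r}$ with the $\fp_j$ distinct prime ideals. Since $\fa\in\cP_K$ these lie above distinct rational primes $p_1,\dots,p_r$, and since each $\fp_j$ has residue degree $1$ we get $\n\fa=\prod_{j=1}^r p_j^{\nu_j}$ with the $p_j^{\nu_j}$ pairwise coprime. By unique factorisation of ideals in $\fo$, $\fa\mid(bs-\theta t)$ holds if and only if $\fp_j^{\nu_j}\mid(bs-\theta t)$ for every $j$, so it suffices to analyse each prime power in turn.

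For the local step, fix $\fp$ with $\n\fp=p$, residue degree $f_\fp=1$, $p\nmid D_K$, and $\nu\geq 1$; I would first show that the natural ring map $\ZZ/p^\nu\ZZ\to\fo/\fp^\nu$ is an isomorphism. For injectivity it is enough that $\fp^\nu\cap\ZZ=p^\nu\ZZ$: since $p\nmid D_K$ the prime $p$ is unramified, so $v_\fp(p)=e_\fp=1$, whence $p^\nu\in\fp^\nu$, while any $p^m\in\fp^\nu$ forces $m=v_\fp(p^m)\geq\nu$; as $\fp^\nu\cap\ZZ$ is an ideal of $\ZZ$ containing $p^\nu\ZZ$ this gives the claim. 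Both rings then have exactly $p^\nu$ elements (recall $\n(\fp^\nu)=p^{\nu f_\fp}=p^\nu$), so the injection is onto. Let $k_\fp\in\ZZ$ represent the image of the algebraic integer $\theta$ under $\fo\to\fo/\fp^\nu\cong\ZZ/p^\nu\ZZ$, so that $\theta\equiv k_\fp\bmod{\fp^\nu}$. Then for $(s,t)\in\ZZ^2$ we have $\fp^\nu\mid(bs-\theta t)$ iff $bs-\theta t\in\fp^\nu$, iff $bs-k_\fp t\in\fp^\nu\cap\ZZ=p^\nu\ZZ$, iff $bs\equiv k_\fp t\bmod{p^\nu}$.

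Finally I would invoke the Chinese remainder theorem: as the $p_j^{\nu_j}$ are pairwise coprime, there is a class $k\bmod{\n\fa}$ with $k\equiv k_{\fp_j}\bmod{p_j^{\nu_j}}$ for all $j$; take $k(\fa)$ to be any integer in it. Combining the first paragraph with the local equivalences then yields $\fa\mid(bs-\theta t)\Leftrightarrow bs\equiv k(\fa)t\bmod{\n\fa}$ for all $(s,t)\in\ZZ^2$, as required. The only step that is not purely formal is the local one with $\nu\geq 2$: one needs $\fo/\fp^\nu$ to be genuinely $\ZZ/p^\nu\ZZ$ rather than merely $\fo/\fp\cong\FF_p$, and this is precisely where $\gcd(\n\fa,D_K)=1$, i.e.\ unramifiedness, is used; the hypothesis $\fa\in\cP_K$ is in turn what lets the $r$ local conditions be merged into a single congruence modulo $\n\fa$ instead of a system of congruences.
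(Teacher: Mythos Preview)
Your argument is correct. The paper itself does not prove this lemma but defers to \cite[Lemma~2.4]{ANT-NT}; your approach---reduce to prime powers, use that an unramified degree-$1$ prime $\fp$ over $p$ satisfies $\fo/\fp^\nu\cong\ZZ/p^\nu\ZZ$ (so $\theta$ has an integer representative modulo $\fp^\nu$), then reassemble via the Chinese remainder theorem using that $\fa\in\cP_K$ forces the underlying rational primes to be distinct---is the natural one and is essentially what appears in the cited reference.
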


\subsection{Uniform upper bounds for  conics}

Let $Q\in \ZZ[y_1,y_2,y_3]$ be a non-singular isotropic quadratic form.
Denote
its discriminant by
$\Delta_Q$ and the greatest common divisor of the $2\times 2$  minors of the associated matrix 
by
$D_Q$. 
It follows from \cite[\S IV.2]{schmidt} that there is a quadratic Dirichlet 
character $\chi_Q$ such that 
\[
\#\{\y\bmod{p}:Q(\y)\equiv 0 \bmod{p}, ~p\nmid\y\} 
=p(p-1)\left(1+\chi_Q(p)\right)+p-1,
\]
for any prime $p$ such that $p\mid \Delta_Q$ and $p\nmid 2 D_Q$.

The main aim of this section is to establish the following result.

\begin{lemma}
\label{detectors 2}
 Let  $w,B_1,B_2,B_3>0$ be given. 
Then 
\[
\#\left\{\y \in \ZZp^3: Q(\y)=0, ~|y_i|\leq B_i 
\right\}
\ll C(Q,w)
\left( 1+\frac{\l(B_1B_2B_3\r)^{\frac{1}{3}}D_Q^{\frac{1}{2}}} 
{|\Delta_Q|^{\frac{1}{3}}
}\right),
\]
with an absolute implied constant, 
where
\[
C(Q,w)=
\prod_{\substack{p^\xi \| \Delta_Q
\\
p \mid  2 D_Q \text{ or } p\leq w
}}
\tau(p^\xi )
\prod_{\substack{
p^\xi  \| \Delta_Q
\\ 
p>w
\\
p\nmid 2D_Q
}}
\l(
\sum_{k=0}^\xi 
\chi_Q(p)^k
\r).
\]
\end{lemma}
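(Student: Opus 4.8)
The plan is to parametrize the primitive isotropic vectors on the conic $Q(\y)=0$ by a quadratic form in two variables and then count lattice points with a congruence/divisor-type argument. First I would recall the classical fact (Niagara--type parametrization, cf. \cite[\S IV.2]{schmidt} or the isotropic conic literature) that since $Q$ is non-singular and isotropic over $\QQ$, there is a $\QQ$-rational point, and hence a rational parametrization $\y = \M(u,v)$ where $\M$ is a $3\times 2$ matrix of quadratic forms in $(u,v)$ with integer entries, such that every primitive $\y$ with $Q(\y)=0$ arises, up to sign, from a primitive $(u,v)\in\ZZ^2$, with the fibres controlled. The content of the coordinates $\gcd(y_1,y_2,y_3)$ divides a fixed integer dividing $\disc$-type data, and more precisely the image point has height roughly $\max|y_i| \asymp \max_i|L_i(u,v)| / e(u,v)$ where the $L_i$ are the quadratic forms and $e(u,v)$ is a bounded ``denominator'' divisor. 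The upshot is that the count is bounded by a sum over $(u,v)$ in a region of size $\asymp \bigl((B_1B_2B_3)^{1/3}D_Q^{1/2}/|\Delta_Q|^{1/3}\bigr)^{1/2}$ in each variable (this is where the precise exponents, the factor $D_Q^{1/2}$, and $|\Delta_Q|^{1/3}$ enter — they come from relating the box $|y_i|\le B_i$ to the box for $(u,v)$ via the determinants of the parametrizing forms), together with a divisor-function weight measuring how many $(u,v)$ map to the same $\y$ or how many times a point is hit.

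Next I would isolate the arithmetic factor $C(Q,w)$. Each point $\y$ on the conic corresponds to a factorization constrained modulo $\Delta_Q$: the number of $(u,v)$ up to scaling giving a fixed $\y$ (or, dually, the number of representations) is governed by the local densities $\#\{\y\bmod{p}:Q(\y)\equiv 0,\ p\nmid\y\}$, which by the quoted formula equal $p(p-1)(1+\chi_Q(p))+p-1$ for $p\mid\Delta_Q$, $p\nmid 2D_Q$. Iterating over prime powers $p^\xi\|\Delta_Q$ produces exactly the factor $\sum_{k=0}^\xi\chi_Q(p)^k$ at good primes $p>w$, while at the finitely many bad primes (dividing $2D_Q$) or small primes ($p\le w$) one simply bounds the local contribution trivially by the divisor function $\tau(p^\xi)$, since there the number of residue classes is $O(p^\xi)$ up to absolute constants and one uses $\tau$ as a safe majorant. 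Multiplying these local bounds gives $C(Q,w)$, and the $1+$ in the statement absorbs the degenerate range where the box is too small to contain non-trivial points (then only the trivial solution, counted as $O(1)$, survives).

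The main obstacle I anticipate is making the parametrization-to-lattice-point step completely uniform in $Q$, with an absolute implied constant: one must track how $D_Q$ (the gcd of $2\times2$ minors) and $\Delta_Q$ enter the determinant of the lattice of $(u,v)$'s, control the content/denominator $e(u,v)$ uniformly, and ensure the boundary and small-box contributions are genuinely $O(1)$ rather than depending on $Q$. Concretely, one needs a clean bound of the form ``the number of primitive $(u,v)$ in a box of side $R$ with $e(u,v)$ bounded, weighted by local factors, is $\ll R \cdot (\text{local product})$'', uniformly, which is essentially a Selberg--Delange / elementary sieve estimate for a multiplicative weight of bounded average; the uniformity in the conductor of $\chi_Q$ is the delicate point. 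Once that is in hand, collecting the Euler product over $p\mid\Delta_Q$ into $C(Q,w)$ and estimating the size of the box gives the stated bound. A cleaner alternative, which I would pursue if the parametrization bookkeeping becomes heavy, is to count directly: fix $y_3$ (say), solve $Q(\y)=0$ as a binary quadratic in $(y_1,y_2)$, and count solutions in the resulting congruence classes modulo the reduced discriminant, summing the divisor-type weight over $y_3$ — this trades the geometric parametrization for a more hands-on divisor sum but leads to the same Euler factors.
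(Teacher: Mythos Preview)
Your approach is genuinely different from the paper's, and the difference is instructive. The paper does not parametrize the conic at all. Instead it revisits the proof of \cite[Cor.~2]{n-2}, which already gives the bound with $C(Q,w)$ replaced by $\tau(\Delta_Q)$. That earlier result rests on Heath-Brown's lattice method \cite[Thm.~2]{cubic}: any primitive zero of $Q$ must lie in one of a bounded collection of sublattices of $\ZZ^3$, and the number of such lattices factors over primes $p^\xi\|\Delta_Q$ as $L(p^\xi)\le c_p\tau(p^\xi)$. The paper's sole contribution is to sharpen the local count $L(p^\xi)$ at primes $p\nmid 2D_Q$ with $\chi_Q(p)=-1$: after diagonalising $Q$ over $\ZZ/p^{\xi+1}\ZZ$ one sees directly that $L(p^\xi)=1$ if $\xi$ is even and that there are no primitive zeros at all if $\xi$ is odd. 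Since $\sum_{k=0}^\xi\chi_Q(p)^k$ equals $\tau(p^\xi)$, $1$, or $0$ in the three respective cases, the refined lattice count is exactly $C(Q,w)$, and the remainder of the argument from \cite{n-2} carries through unchanged with an absolute implied constant.

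Your parametrization route could in principle be made to work, but the obstacle you flag---uniformity of the implied constant in $Q$---is exactly what makes it unattractive here. The smallest rational point on an isotropic conic can have height as large as $|\Delta_Q|^{1/2}$, and pushing this through the parametrization while keeping track of $D_Q$ and the content factors, all with absolute constants, is delicate bookkeeping that the lattice method bypasses entirely. The paper's argument is essentially a two-paragraph local computation grafted onto a result already in the literature; your proposal would require rebuilding the uniform conic bound from scratch. Neither your parametrization sketch nor your alternative (fixing $y_3$ and solving a binary quadratic) makes clear how the factor $\sum_{k=0}^\xi\chi_Q(p)^k$ emerges cleanly, whereas in the lattice picture it is immediate once one sees that $\chi_Q(p)=-1$ forces a unique (or empty) local lattice.
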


Since $C(Q,w)\leq \tau(\Delta_Q)$, this result is a refinement of work due to Browning  and Heath-Brown 
\cite[Cor.~2]{n-2}. 
In fact, although not needed  here,
one can show that for any prime $p\nmid 2D_Q$, the $p$-adic factor appearing above is commensurate 
with the  $p$-adic 
Hardy--Littlewood density for the conic $Q=0$. Furthermore, 
if this curve  has no $\QQ_p$-points for some prime $p\nmid 2D_Q$,
then the constant in the upper bound vanishes.
Therefore, Lemma~\ref{detectors 2} 
detects conics with a rational point.
This is the point of view adopted in the work of Sofos \cite{sofos}.
 
\begin{proof}[Proof of Lemma~\ref{detectors 2}]
The proof of 
\cite[Cor.~2]{n-2} relies on earlier work of Heath-Brown \cite[Thm.~2]{cubic}.
The latter work produces an upper bound for the number  of  lattices (with determinant depending on the coefficients of $Q$) that any non-trivial zero of $Q$ is constrained to lie in. 
For each prime $p$ such that 
$p^\xi\| \Delta_Q$,  it turns out that there are at most $L(p^\xi)\leq c_p\tau(p^\xi)$ lattices to consider, where $c_p=1$ for $p>2$.

Suppose that $\y\in \ZZp^3$ is a non-zero vector for which $Q(\y)=0$.
Let $p$ be a prime such that $p^\xi\| \Delta_Q$, with $p\nmid 2 D_Q$ and $\chi_Q(p)=-1$.
On diagonalising over $\ZZ/p^{\xi+1}\ZZ$, we may assume that 
$$
a_1y_1^2+a_2y_2^2+p^\xi y_3^2\equiv 0 \bmod{p^{\xi+1}},
$$
for coefficients $a_1,a_2\in \ZZ$ such that  $p\nmid a_1a_2$. In particular, we have
$\chi_Q(p)=(\frac{-a_1a_2}{p})=-1$. Hence $L(p^\xi)=1$ when 
$\xi$ is even, since then 
$\y$ is merely  constrained to lie on the lattice $\{\y\in \ZZ^3: y_1\equiv y_2\equiv 0\bmod{p^{\xi/2}}\}$.
Likewise, when $\xi$ is odd, there can be no  solutions in primitive integers  $\y$.

Note that 
\[
\sum_{k=0}^\xi 
\chi_Q(p)^k=\begin{cases}
\tau(p^\xi ) &\text{ if $\chi_Q(p)=1$},\\
1 &\text{ if $\chi_Q(p)=-1$ and $\xi$ is even},\\ 
0 &\text{ if $\chi_Q(p)=-1$ and $\xi$ is odd.}
\end{cases}
\]
It follows that the total number of lattices emerging is 
\begin{align*}
&\ll \mathbf{1}(\Delta_Q)
\prod_{\substack{p^\xi \| \Delta_Q
\\
p | 2 D_Q
}}
\tau(p^\xi )
\prod_{\substack{p^\xi \| \Delta_Q
\\
p \leq w
\\
p\nmid 2 D_Q
}}
\tau(p^\xi )
\prod_{\substack{p^\xi  \| \Delta_Q
\\
\chi_Q(p)=1 
\\
p>w
\\
p\nmid 2D_Q
}}
\tau(p^\xi ) 
=C(Q,w),
\end{align*}
where 
$\mathbf{1}(\Delta_Q)=0$
(resp.~$\mathbf{1}(\Delta_Q)=1$)
 if  there exists $p^\xi  \| \Delta_Q$ such that 
$\chi_Q(p)=-1$, with $\xi$ odd and 
$p\nmid 2D_Q$ (resp.~otherwise).
This completes the proof of the lemma.
\end{proof}

\subsection{Lattice point counting}
\label{s:doctorwhat}
We will need general results about counting lattice points in an expanding region. 
Let $\c{D}\subset \R^2\setminus \{\b{0}\}$ be a non-empty open disc
and put
$\delta(\cD)=\|\cD\|_\infty$, in the notation of \S \ref{s:Mk}.
Let $b,c,q \in \Z$ and $\x_0\in \ZZ^2$ such that  $q\geq 1$ and 
$\gcd(\x_0,q)=1$.
For each $e\in \NN$ such that 
$\gcd(e,q)=\gcd(b,c,e)=1$, we  define the non-empty
set 
\[
\Lambda(e)=\{(s,t)\in \Z^2:bs\equiv ct \md{e}\}.
\]
We then  fix, once and for all,
a non-zero
vector of minimal Euclidean length within $\Lambda(e)$
and we call it $\b{v}(e)$. We are interested in 
$$
N(x)=\#\Big\{\b{x}\in \Zp^2 \cap x\c{D}\cap \Lambda(e):\b{x}\equiv \x_0\md{q}\Big\},
$$
as $x\to \infty$.
We shall prove the following result.

\begin{lemma}
\lab{lem:historical}
Let $\cD, b,c,\x_0,q,\Lambda(e), \v(e), N(x)$ be as above, and assume that $|\b{v}(e)|
\leq \delta(\cD) x$.
Then 
\begin{align*}
N&(x)=\frac{\mathrm{vol}(\c{D})x^2}{\zeta(2) e q^2}
\prod_{p | e} 
\l(1+\frac{1}{p}\r)^{-1}
\prod_{p | q} 
\l(1-\frac{1}{p^2}\r)^{-1}\\
&+O\left(
\l(\beta+\gamma\r)
x
\l\{
\l(
\sum_{d|e}\frac{1}{d|\b{v}(e/d)|}
\log\left(2+ \frac{\delta(\cD) x}{d|\b{v}(e/d)|}\right) \r)  
+
\frac{1}{e}
\sum_{d|e}|\b{v}(d)|
\r\}\right),
\end{align*}
where
\[
\beta=\delta(\cD)+\frac{\partial{\c{D}}}{q}, \quad 
\gamma=
\frac{\mathrm{vol}(\c{D})}{\delta(\cD) q^2}
.\]
The  implied constant in this estimate is absolute. 
\end{lemma}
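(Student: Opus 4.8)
The plan is to reduce the count to a standard lattice-point estimate, executed congruence class by congruence class modulo $q$ after first stripping off the primitivity condition by Möbius inversion, and then to control the resulting sum of error terms using the geometry of $\Lambda(e)$ via its divisors.

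\medskip

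\emph{Step 1: Split off primitivity and the congruence.} First I would write $N(x)$ by Möbius inversion over the primitivity condition: for $\b x \in \ZZ^2 \cap x\cD \cap \Lambda(e)$ with $\b x \equiv \x_0 \pmod q$, insert $\sum_{d \mid \gcd(\b x)} \mu(d)$. Since $\gcd(\x_0,q)=1$ we may restrict to $\gcd(d,q)=1$, and writing $\b x = d\b y$ the condition $d\b y \equiv \x_0 \pmod q$ pins $\b y$ into a single residue class $\x_0 \bar d \pmod q$. Also $d\b y \in \Lambda(e)$ is equivalent to $\b y \in \Lambda(e/\gcd(e,d))$ after removing the common factor — here I would split $d = d_1 d_2$ with $d_1 \mid e^\infty$ and $\gcd(d_2, e)=1$, or more simply note $bd s' \equiv c d t' \pmod e$ is $bs' \equiv ct' \pmod{e/\gcd(d,e)}$. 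This expresses $N(x)$ as a sum over squarefree $d$ of counts of \emph{all} integer points of a shifted lattice in the dilated disc $\tfrac{x}{d}\cD$.

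\medskip

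\emph{Step 2: Count points of one lattice in a disc.} For fixed $d$, the points $\b y \in \ZZ^2$ with $\b y \in \Lambda(e')$ (where $e' = e/\gcd(d,e)$) and $\b y \equiv \b y_0 \pmod q$ form a coset of a lattice $M$ of determinant $e' q^2$ (the index of $\Lambda(e') \cap (q\ZZ^2 + \b y_0)$'s underlying lattice: $[\ZZ^2:\Lambda(e')] = e'$ and intersecting with the index-$q^2$ sublattice $q\ZZ^2$ multiplies the index, using $\gcd(e',q)=1$). A classical result (boundary-length estimate for convex regions, going back to work in the style of \cite{n-2} or standard lattice-point folklore) gives that the number of points of a shifted lattice $M$ in a convex region $\c R$ is $\vol(\c R)/\det M + O(1 + \partial(\c R)/\lambda_1(M))$, where $\lambda_1(M)$ is the first minimum of $M$. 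Applying this with $\c R = \tfrac{x}{d}\cD$, so $\vol(\c R) = x^2 \vol(\cD)/d^2$ and $\partial(\c R) = x\,\partial(\cD)/d$, yields the main term $\tfrac{x^2 \vol(\cD)}{d^2 e' q^2}$ plus an error $O\!\left(1 + \tfrac{x\,\partial(\cD)}{d\, q\, \lambda_1(M)}\right)$. One must also record the trivial bound $O\!\left(1 + \tfrac{\delta(\cD)x/d}{\lambda_1(M)}\right)$ from the diameter, which is what produces the $\delta(\cD)$ term in $\beta$ and the $\gamma$ term (via $\vol/\delta$). The first minimum $\lambda_1(M)$ is, up to bounded factors from the $q$-part, comparable to $|\b v(e')| = |\b v(e/\gcd(d,e))|$.

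\medskip

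\emph{Step 3: Reassemble the main term and the error.} Summing the main terms $\sum_{d, \gcd(d,q)=1} \mu(d) \tfrac{x^2\vol(\cD)}{d^2 e' q^2}$ and factoring the arithmetic function of $d$ into an Euler product gives exactly $\tfrac{\vol(\cD) x^2}{\zeta(2) e q^2} \prod_{p\mid e}(1+\tfrac1p)^{-1} \prod_{p\mid q}(1-\tfrac1{p^2})^{-1}$: the $p \mid q$ factors come from dropping those $p$ from the sum over $d$, and the $p \mid e$ factors come from the fact that for $p \mid e$ the value $e'=e/p$ (for $p\|e$, say, with care for higher powers) changes the denominator, producing the local correction $(1+1/p)^{-1}$ after summing the geometric-type series in the $p$-part of $d$. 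For the error, the contribution of each $d$ is $O\big((\beta+\gamma)\, x\, \tfrac{1}{d\,|\b v(e/\gcd(d,e))|}\big)$ plus the $O(1)$-type terms; summing over $d \mid$ (relevant part) and crudely over the rest produces the two displayed error sums — the logarithm arises from summing the tail of $1/d$ against $1/|\b v(e/d)|$ up to the truncation point $\delta(\cD)x/(d|\b v(e/d)|)$ where the diameter bound takes over, and the term $\tfrac1e \sum_{d\mid e}|\b v(d)|$ absorbs the accumulated $O(1)$'s weighted appropriately.

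\medskip

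\emph{Main obstacle.} The delicate point is \textbf{Step 3}: keeping track of how $\b v(e')$ — the shortest vector of $\Lambda(e/\gcd(d,e))$ — depends on $d$, since $\b v$ need not behave multiplicatively and a short vector for $\Lambda(e)$ need not relate cleanly to one for $\Lambda(e/d)$. Getting the error term into precisely the stated shape, with the divisor sum $\sum_{d\mid e} \tfrac{1}{d|\b v(e/d)|}\log(2 + \delta(\cD)x/(d|\b v(e/d)|))$ and the companion sum $\tfrac1e\sum_{d\mid e}|\b v(d)|$, requires carefully distinguishing the regime where the lattice is "fat" (boundary term dominates) from where it is "thin" (a point-count by slices is needed), and summing the $O(1)$ contributions across the $\tau(e)$ divisors $d$ with the right weights. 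The assumption $|\b v(e)| \leq \delta(\cD)x$ is exactly what guarantees the region is large enough in the relevant direction for the main term to genuinely dominate; I would invoke it to justify discarding certain error terms as being of smaller order than others.
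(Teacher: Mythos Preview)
Your overall architecture is correct and matches the paper's: M\"obius inversion to strip primitivity, reduce to a lattice-point count in a shrunken copy of $\cD$, then reassemble main and error terms. Two points deserve correction, however.

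First, you offer two alternatives in Step~1: either split $d=d_1d_2$ with $d_1\mid e^\infty$ and $\gcd(d_2,e)=1$, or work directly with $e'=e/\gcd(d,e)$. The paper \emph{commits} to the first alternative, which is really a double M\"obius: one sum over $m$ coprime to $eq$ (removing primitivity while keeping an auxiliary condition $\gcd(u,v,e)=1$), then a second sum over $d\mid e$ (removing that condition). This organisation is not cosmetic. With $d$ fixed and $m$ ranging, the inner count is nonempty only for $m\leq \delta(\cD)x/(d|\b v(e/d)|)$, and this truncation is precisely what allows the $O(1)$ in the lattice estimate to be absorbed as $1\leq \delta(\cD) x/(md|\b v(e/d)|)$, producing the $\delta(\cD)$ piece of $\beta$. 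If you stay with a single variable $d$ and the dependence $e'=e/\gcd(d,e)$, the truncation becomes implicit and the accumulated $O(1)$'s over all $d\leq \delta(\cD)x$ are of size $\asymp x$, which is \emph{not} dominated by the stated error in general. So you should take the split seriously, not treat it as an aside.

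Second, you have misidentified where $\gamma$ and the second error sum $\tfrac{1}{e}\sum_{d\mid e}|\b v(d)|$ come from. They do not arise from ``accumulated $O(1)$'s'' or from a ``trivial diameter bound''. In the paper, after the per-$(m,d)$ lattice errors are summed to give the first error sum (with the logarithm coming from $\sum_{m\leq T}1/m$), one is left with the main term
\[
\frac{\vol(\cD)x^2}{eq^2}\sum_{\substack{m\\ \gcd(m,eq)=1}}\frac{\mu(m)}{m^2}\sum_{\substack{d\mid e\\ d\leq \delta(\cD)x/(m|\b v(e/d)|)}}\frac{\mu(d)}{d}.
\]
Extending the inner $d$-sum to all $d\mid e$ costs, for each excluded $d$, at most $\tfrac{\vol(\cD)x^2}{dem^2q^2}\leq \tfrac{\vol(\cD)x}{\delta(\cD)q^2}\cdot\tfrac{|\b v(e/d)|}{em}$ by the violated inequality; summing over $m$ and $d\mid e$ yields exactly $\gamma\, x\cdot\tfrac{1}{e}\sum_{d\mid e}|\b v(d)|$. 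So $\gamma=\vol(\cD)/(\delta(\cD)q^2)$ is a main-term coefficient, not a diameter artefact.

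One small reassurance: your concern about $\lambda_1(M)$ for the combined lattice is unfounded. Since $\gcd(e',q)=1$, one has $\Lambda(e')\cap q\ZZ^2=q\Lambda(e')$ exactly, so $\lambda_1=q|\b v(e')|$ on the nose. The paper sidesteps this by the explicit change of variables $(s,t)=n(s_0,t_0)+q(s',t')$, which shrinks the region by $q$ in each direction and reduces to counting a coset of $\Lambda(e/d)$ directly, but your route would give the same $q$-dependence.
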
 

For any $d\mid e$, let us denote
$\b{v}(e/d)$ by $(x_0,x_1)$, temporarily. Then 
\[
\frac{e}{d}
\mid 
(bx_0-c x_1)
\Rightarrow
(dx_0,dx_1) \in \Lambda(e),
\]
whence
\begin{equation}
\label{eq:painkiller}
|\b{v}(e)|
\leq 
d |\b{v}(e/d)|
.\end{equation}
Moreover, using the basic properties of the minimal basis vector,  one  obtains
\begin{equation}\label{eq:paingiver}
\frac{1}{e}\sum_{d\mid e} |\v(d)| \ll \frac{1}{e}\sum_{d\mid e} \sqrt{d}\leq \frac{\tau(e)}{\sqrt{e}}\ll \frac{\tau(e)}{|\v(e)|}.
\end{equation}
These inequalities may be used to 
simplify 
the error  term in  Lemma \ref{lem:historical}.

\begin{proof}[Proof of Lemma \ref{lem:historical}]
Our argument is based on a modification of the proof of 
  \cite[Lemma 5.3]{sofos}.
We write $\delta=\delta(\cD)$ for short and put 
$\x_0=(s_0,t_0)$. 
Since $\gcd(s_0,t_0,q)=1$, an application  of M\"obius inversion gives
\[
N(x)=
\sum_{\substack{m \in \N \\ \gcd(m,eq)=1}}
\mu(m)
\sum_{\substack{(u,v) \in \frac{x}{m}\c{D}\cap \Lambda(e)\\ 
\gcd(u,v,e)=1
\\
(u,v)\equiv \bar{m}(s_0,t_0)\md{q}}}
1
.\]
on making the  substitution $s=mu$ and $t=mv$.  
The inner sum is empty if $m$ is large enough.
Indeed, if it contains any terms then we must have 
$$
1\leq |\b{v}(e)|
=
\min\{|\b{y}|:\b{y} \in \Lambda(e)\setminus \{\0\}\}
 \leq \max\left\{|\b{y}|:\b{y} \in \frac{x}{m}\c{D}\right\}
\leq \frac{\delta x}{m}.
$$
Thus, on using the M\"obius function to remove the condition
$\gcd(u,v,e)=1$, 
we find that 
\[
N(x)=
\sum_{\substack{m \in \N \\ \gcd(m,eq)=1
\\
m\leq \frac{\delta x}{|\b{v}(e)|}
}}
\mu(m)
\sum_{d\mid e}\mu(d)
\sum_{\substack{(u,v) \in \frac{x}{m}\c{D}\cap \Lambda(e)
\\
d\mid u, ~d\mid v\\
(u,v)\equiv \overline{m} (s_0,t_0)\md{q}}}
1
.\]
Making the substitution 
$u=ds$ and $v=dt$, and arguing as before we find that 
\[
N(x)=
\sum_{\substack{m \in \N \\ \gcd(m,eq)=1
\\
m\leq \frac{\delta x}{|\b{v}(e)|}
}}
\mu(m)
\sum_{\substack{d\mid e\\ 
d\leq \frac{\delta x}{|\v(e/d)|m}
}}\mu(d)
\sum_{\substack{(s,t) \in \frac{x}{dm}\c{D}\cap \Lambda(e/d)
\\
(s,t)\equiv \overline{dm} (s_0,t_0)\md{q}}}
1.
\]

Now let $n \in \Z$ be such that $n\equiv \overline{dm} \md{q}$.
Then we can make the change of variables
$
(s,t)=n(s_0,t_0)+q(s',t')
$
in the inner sum. 
Noting that $\Lambda(e/d)$ defines a lattice in $\Z^2$
of determinant $e/d$, the inner sum is found to be 
\[
\frac{\mathrm{vol}(\c{D})x^2}{dem^2q^2}
+O\l(1+\frac{\frac{x}{dm}\partial{\c{D}}}{q|\b{v}(e/d)|}\r)
=\frac{\mathrm{vol}(\c{D})x^2}{dem^2q^2}
+O\l(\beta
\frac{x}{md|\b{v}(e/d)|} 
\r),
\]
with an absolute implied constant,
since the upper bound on $d$ implies that 
\[
1
\leq  \frac{\delta x}{dm |\b{v}(e/d)|} .
\] 
In summary, we have shown that 
$$
N(x)=
\sum_{\substack{m \in \N \\ \gcd(m,eq)=1
\\
m\leq \frac{\delta x}{|\b{v}(e)|}
}}
\mu(m)
\sum_{\substack{d\mid e\\ 
d\leq \frac{\delta x}{|\v(e/d)|m}
}}\mu(d)
\left(
\frac{\mathrm{vol}(\c{D})x^2}{dem^2q^2}
+O\l(\beta
\frac{x}{md|\b{v}(e/d)|} 
\r)\right).
$$
The contribution from the  error term is 
\[
\ll\beta x
\sum_{d|e}\frac{1}{d|\b{v}(e/d)|}
\sum_{m \leq \frac{\delta x}{d|\b{v}(e/d)|}}\frac{1}{m}
\ll
\beta x
\sum_{d|e}\frac{1}{d|\b{v}(e/d)|}
\log \left(2+\frac{\delta x}{d|\b{v}(e/d)|}\right)
.\]
The main term equals
\[
\frac{\mathrm{vol}(\c{D})x^2}{eq^2}
\sum_{\substack{m \in \N \\ \gcd(m,eq)=1
}}
\hspace{-0,5cm}
\frac{\mu(m)}{m^2}
\sum_{\substack{d|e\\
d\leq \frac{\delta x}{|\b{v}(e/d)|m}
}}\frac{\mu(d)}{d},
\]
since ~\eqref{eq:painkiller} implies that the extra constraint in $m$-sum is implied by the constraint in the $d$-sum.
But this is equal to 
\[
\frac{\mathrm{vol}(\c{D})x^2}{eq^2}
\sum_{d\mid e}\frac{\mu(d)}{d}
\sum_{\substack{m \in \N \\ \gcd(m,eq)=1
}}
\hspace{-0,5cm}
\frac{\mu(m)}{m^2}
+O
\l(
\frac{\mathrm{vol}(\c{D})x}{\delta q^2}
\cdot
\frac{1}{e}
\sum_{d\mid e}
|\b{v}(e/d)|
\r)
,\]
 which thereby completes the proof.  
\end{proof}

\subsection{Twisted Hooley $\Delta$-function over number fields}
\label{s:twist}

Adopting the notation of \S \ref{s:intro}, it
is now time to reveal the version of the {\em Hooley $\Delta$-function} 
that arises in our work. Let $K/\QQ$ be a number field
and let $\psi_K$ be a 
quadratic 
Dirichlet character on $K$. 
We let 
$\Delta:\mathcal{I}_{K} \to \R_{>0}$ be the function given by 
\begin{equation}
\label{eq:manannan}
\Delta(\fa;\psi_K)=
\sup_{\substack{u \in \R\\ 0\leq v\leq 1}}
\Big|
\sum_{\substack{
\fd\mid \fa\\
\eul^u 
<
\n_K\fd
\leq \eul^{u+v} 
}}
\psi_K(\mathfrak{d})
\Big|
,\end{equation}
for any integral ideal $\fa\in \mathcal{I}_K$.
We shall put 
$\Delta(\fa)=\Delta(\fa;\mathbf{1})$ for the corresponding function in which 
$\psi_K$ is replaced by the constant function $\mathbf{1}$.

We begin by  showing that $\Delta$ belongs to the class $\mathcal{M}_K$ of pseudomultiplicative functions introduced in \S \ref{s:Mk}. 
For coprime ideals 
$\fa_1,\fa_2 \subset \fo_K$, any ideal divisor  $\fd\mid \fa_1 \fa_2$ can be written uniquely
as 
$\fd=\fd_1 \fd_2$,  where $\fd_i\mid \fa_i$.
Therefore 
\[
\sum_{\substack{
\fd\mid \fa_1 \fa_2\\
\eul^u 
<
\n_K\fd
\leq \eul^{u+v} 
}}
\psi_K(\mathfrak{d})
=
\sum_{\substack{
\fd_1\mid \fa_1\\
}}
\psi_K(\mathfrak{d_1}) 
\sum_{\substack{
\fd_2\mid \fa_2 \\
\eul^{u-\log \n_K\fd_1} 
<
\n_K
\fd_2
\leq 
\eul^{u-\log \n_K\fd_1} 
\eul^{v} 
}}
\hspace{-0,5cm}\psi_K(\mathfrak{d}_2). 
\]
Thus  the triangle inequality yields
$\Delta(\fa_1\fa_2;\psi_K)
\leq 
\tau_K(\fa_1)
\Delta(\fa_2;\psi_K)$,
where $\tau_K$ is the divisor function on ideals of $\fo_K.$ 
This shows that 
$\Delta(\cdot,\psi_K)$
belongs to $\mathcal{M}_K$ 
and an identical argument confirms this for $\Delta(\cdot)$.

We shall need the following result proved in~\cite{sofos4hooley}.

\begin{lemma}
\label{lem:sofos}
Define the function 
\[
\widehat{\epsilon}(x)
=\sqrt{\frac{\log \log \log (16+x)}{\log \log (3+x)}},
\] 
for any $x\geq 1$
and recall the definition 
\eqref{eq:span-0} of  $\cP_K^\circ$.

 \begin{itemize}
\item[(i)] There exists a positive constant $c=c(K)$   
such that 
$$
\sum_{\substack{
\fa\in \cP_K^\circ  \text{ square-free} \\  
\n_K \fa  \leq x
}}\frac{\Delta(\fa)}{\n_K\fa}\ll (\log x)^{1+c\widehat{\epsilon}(x)}
.$$
\item[(ii)]   Let $\psi_K$ be a 
quadratic 
Dirichlet character on $K$ and let $W\in \NN$.
There exists a positive constant $c=c(K,\psi_K)$  
such that 
$$
\sum_{\substack{
\fa\in \cP_K^\circ  \text{ square-free} \\
\gcd(\n_K \fa,W)=1 \\
\n_K \fa  \leq x
}}\frac{\Delta(\fa;\psi_K)^2}{\n_K\fa}\ll  
(\log x)^{1+c\widehat{\epsilon}(x)}.
$$
\end{itemize}
The implied constant in both estimates is allowed to depend on $K$
and, in the second estimate, also on $W$ and the character $\psi_K$. 
\end{lemma}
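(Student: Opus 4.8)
The plan is to reduce the number-field estimate in Lemma~\ref{lem:sofos} to its rational analogue, exploiting the fact that the sum is restricted to ideals in $\cP_K^\circ$, i.e.\ products of degree-one prime ideals. First I would observe that for a square-free $\fa\in \cP_K^\circ$ the ideal norm $\n_K\fa$ is square-free and, crucially, the map $\fa\mapsto \n_K\fa$ is \emph{finite-to-one with bounded fibres}: above a rational prime $p$ there are at most $d=[K:\QQ]$ prime ideals of residue degree~$1$, so the number of square-free ideals in $\cP_K^\circ$ of norm $n$ is $O_d(\tau_d(n))$ or, more crudely, $O(n^{\ve})$. I would then compare $\Delta(\fa;\psi_K)$ with the classical twisted Hooley function on $\ZZ$. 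For an ideal $\fa\in \cP_K^\circ$ the divisors $\fd\mid\fa$ are parametrised (non-uniquely, but boundedly) by the divisors of $\n_K\fa$, and $\psi_K$ evaluated on $\fd$ is governed by a genuine Dirichlet character; so $\Delta(\fa;\psi_K)$ is majorised by a bounded sum of translates of $\Delta(n;\chi)$-type quantities with $n=\n_K\fa$, where $\chi$ is the rational character underlying $\psi_K$ (or a small product of such). The upshot is a bound of the shape
\[
\sum_{\substack{\fa\in \cP_K^\circ\text{ sq-free}\\ \gcd(\n_K\fa,W)=1\\ \n_K\fa\leq x}}\frac{\Delta(\fa;\psi_K)^2}{\n_K\fa}
\ll \sum_{\substack{n\leq x\\ \gcd(n,W)=1}}\frac{g(n)\,\Delta^*(n)^2}{n},
\]
where $g$ is a bounded multiplicative weight with $g(p)=O(1)$ encoding the fibre count, and $\Delta^*$ is (a bounded combination of shifts of) the rational twisted Hooley $\Delta$-function of la Bret\`eche--Tenenbaum and Br\"udern.

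The second step is to cite the rational input. The second moment of the twisted Hooley $\Delta$-function over square-free integers with a bounded multiplicative weight is exactly of the strength $(\log x)^{1+c\widehat\epsilon(x)}$; this is the content of the companion paper \cite{sofos4hooley}, whose main rational estimate I would quote directly. Part~(i) is the untwisted case $\psi_K=\mathbf 1$, which is if anything easier and is again reduced to the corresponding rational bound for $\sum_{n\leq x}\Delta(n)/n \ll (\log x)^{1+c\widehat\epsilon(x)}$ (a classical result of Hall--Tenenbaum type, sharpened with the $\widehat\epsilon$ loss in \cite{sofos4hooley}); the extra weight $g(n)$ from the fibre count is absorbed since it is multiplicative and $O(1)$ at every prime, contributing only an additional $O(\log\log x)$ power to the Euler product, which is dominated by the $(\log x)^{c\widehat\epsilon(x)}$ slack.

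The main obstacle, and the place where the argument needs genuine care rather than bookkeeping, is the comparison between $\Delta(\fa;\psi_K)$ and the rational $\Delta^*(\n_K\fa)$: the suprema defining the two functions are over the \emph{same} parameter range $u\in\RR$, $0\leq v\leq 1$ (because $\log\n_K\fd=\log(\text{integer})$ when $\fd\in\cP_K^\circ$), so the geometry of the short intervals matches, but one must check that the character values line up. Concretely, since $\fa\in\cP_K$ is divisible by at most one prime above each rational prime (when we further restrict to $\cP_K$, or handle the general $\cP_K^\circ$ case by splitting off the finitely many ramified primes and the bounded ambiguity above split primes), the Artin symbol $\psi_K(\fd)$ depends only on $\n_K\fd$ through a Dirichlet character determined by the extension $L/K$ pushed down to $\QQ$ — this is precisely the mechanism in Lemma~\ref{lem:reuss}(ii), where $\psi(\fa)=\big(\tfrac{G(s,t)}{\n\fa}\big)$. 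Making this dictionary precise, uniformly in $W$, and tracking that the implied constants depend only on $K$ and $\psi_K$ (and $W$ in part (ii)), is the technical heart; once it is in place, parts (i) and (ii) follow formally from the rational second-moment bounds of \cite{sofos4hooley}.
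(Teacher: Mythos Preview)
The paper does not prove this lemma at all: it is stated as ``the following result proved in~\cite{sofos4hooley}'' and simply quoted from the companion paper, which establishes the number-field moment bounds directly by adapting the Hall--Tenenbaum and Maier--Tenenbaum machinery to ideals.

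Your reduction-to-$\QQ$ strategy, by contrast, has a genuine gap. First, the opening observation is false as stated: for a square-free $\fa\in\cP_K^\circ$ the norm $\n_K\fa$ need \emph{not} be square-free, since $\fa$ may be divisible by two distinct degree-one primes $\fp_1,\fp_2$ above the same rational prime $p$, giving $p^2\mid\n_K\fa$. You later acknowledge this by retreating to $\cP_K$, but the lemma is about $\cP_K^\circ$, and the passage from one to the other is not addressed. More seriously, the key step---that $\psi_K(\fd)$ is determined by $\n_K\fd$ via a rational Dirichlet character---does not hold for a general square-free $\fa\in\cP_K$ (let alone $\cP_K^\circ$). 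Different degree-one primes $\fp$ above the same $p$ can carry different values of $\psi_K$, so $\Delta(\fa;\psi_K)$ cannot be rewritten as a rational $\Delta(n;\chi)$. Your appeal to Lemma~\ref{lem:reuss}(ii) is misplaced: that lemma compares $\psi(\fa)$ with a Jacobi symbol only for divisors $\fa$ of a \emph{fixed} principal ideal $(bs-\theta t)$, where the ambient element pins down which prime above $p$ is in play; it says nothing about arbitrary ideals in $\cP_K$. Without this dictionary the reduction collapses, and one is forced---as in \cite{sofos4hooley}---to run the whole $\Delta$-function argument intrinsically over $K$.
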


\section{The lower bound}
\label{s:lower}

In order to prove the lower bound in Theorem \ref{th:upper}, 
we first  appeal to work of Frei, Loughran
and Sofos \cite{FS}.   
It follows from \cite[Thm.~1.2]{FS} that the desired lower bound holds when $\rho\geq 4$. Suppose that $\rho=3$.
Then \eqref{eq:rank} implies that 
in the fibration $\pi:X\to \PP^1$ there is at least one closed point $P\in \PP^1$ above which the singular fibre $X_P$
is split. 
Since the sum $c(\pi)$  defining the {\em complexity} of $\pi$ in \cite[Def.~1.5]{FS} is at most $4$ for  conic bundle  quartic del Pezzo surfaces, we infer that $c(\pi)\leq 3$ when $\rho=3$,
so that the lower bound in Theorem~\ref{th:upper} is a consequence of \cite[Thm.~1.7]{FS}.
Throughout this section, it therefore suffices to assume that $\rho=2$ and $\delta_0=0$, so that $X$ is a 
\textit{minimal} conic bundle surface.

Invoking  \cite[Thm.~1.6]{FS},  
 the lower bound in Theorem~\ref{th:upper} is a direct consequence of  the {\em divisor sum conjecture} 
that is recorded in \cite[Con.~1]{hypergods},
 for the relevant  data associated to the fibration $\pi$.   
Note that the principal result in~\cite{hypergods} 
only covers
cubic divisor sums, 
since we still lack  the technology to asymptotically evaluate 
divisor sums of higher degree with a power saving in the error term. 
The goal of this section is to
estimate certain quartic divisor sums,  
with a 
logarithmic saving in the error term, which  turns out to be sufficient for proving the lower bound in Theorem \ref{th:upper}.
The divisor sums relevant here shall involve
complicated quadratic symbols whose modulus tends to infinity,
a delicate task that will be the entire focus of this section. 

We proceed to explain the particular case of the  divisor sum conjecture that is germane here.
Assume that we are given
homogeneous polynomials 
$F_1,\ldots,F_n,G_1,\ldots,G_n \in \Z[x,y]$
with 
\[
F_i \text{ irreducible}, \quad
F_i\nmid G_i, \quad 2\mid \deg(G_i), \quad \text{and   $\quad 
\prod_{i=1}^n F_i$ separable}
.\] 
For each $i$ such that $F_i(1,0)\neq 0$, 
we define the associated  binary form
$\tilde F_i(x,y)=b_i^{d_i-1}F_i(b_i^{-1}x,y)$, as
in \eqref{eq:burger}, where $d_i=\deg F_i$ and $b_i=F_i(1,0)$. 
For such $i$ we let 
$\theta_i \in \bar\Q$ 
be a fixed root of $\tilde F_i(x,1)=0$. If, on the other hand,  
$F_i(x,y)$ is proportional to $y$, we  define $\theta_i=-F_i(0,1)$.   
We may assume that 
\begin{equation}\label{eq:egg}
\sum_{i=1}^n d_i=4
\end{equation}
and that $G_i(\theta_i,1) \notin \Q(\theta_i)^2$
for every $i$,
because 
in the correspondence outlined in \cite{FS}, 
the binary forms $F_1,\dots,F_n$ are equal to the closed points  $\Delta_1,\dots,\Delta_n$ from \S \ref{s:intro}.  Indeed, 
 under  this correspondence,
the statement 
$G_i(\theta_i,1) \notin \Q(\theta_i)^2$ is equivalent to the singular fibre above  $\Delta_i$ being non-split, which holds for any $i$ since we are  working with minimal conic bundle surfaces.

Let  
\begin{equation}\label{eq:i'm on a train}
f(d)=\prod_{p\mid d}\l(1-\frac{2}{p}\r).
\end{equation}
We need to prove that
there exists a finite set of primes $S_{\text{bad}}=
S_{\text{bad}}(F_i,G_i)$ 
such that for all $W\in \N$,  all
$(s_0,t_0) \in \Zp^2$, and all non-empty compact  discs 
$\c{D} \subset \R^2$, which together satisfy the conditions 
\begin{itemize}
\item[(C1)]
$p\in S_{\text{bad}}
\Rightarrow 
p\mid W$; 
\item[(C2)] $\prod_{i=1}^n F_i(s_0,t_0)\neq 0$;
\item[(C3)] 
$(s,t)\in \R^2\cap \c{D}\Rightarrow \prod_{i=1}^n F_i(s,t)\neq 0$; and
\item[(C4)]
for all $(s,t) \in \Zp^2\cap x\c{D}$ with  $x\geq 1$ and $(s,t)\equiv (s_0,t_0)\md{W}$ we have 
$$
\l(\frac{G_i(s,t)}{F_i(s,t)_{W}}\r)
=1;
$$
\end{itemize}
we have the lower bound  $D_{W}(x)\gg x^2$, where
\begin{equation}\label{eq:low}
D_W(x)=
\sum_{\substack{(s,t) \in \ZZp^2\cap x\c{D}\\ (s,t)\equiv (s_0,t_0)\bmod{W}}}
\prod_{i=1}^n
\left(
f(F_i(s,t)_{W})
\sum_{d\mid F_i(s,t)_{W}}
\left(
\frac{G_i(s,t)}{d}
\right)
\right).
\end{equation}
Here, we  recall the notation
$m_W=\prod_{p\nmid W} p^{\nu_p(m)}$ for all $m,W \in \N$.

We shall prove this conjectured lower bound when 
$S_{\text{bad}}$
is taken to be the set of all primes up to a constant $w=w(F_i,G_i)$.
In what follows  we shall often write that 
we  need to enlarge $w$.  This statement is to be  interpreted as having already taken a very large constant $w$ at the outset of  the proof of the conjecture, rather than  increasing $w$ within the confines of the lower bound arguments.
The primary goal of this section is now to establish the following bound, 
which directly leads to the lower bound  in Theorem \ref{th:upper}.

\begin{proposition}
\label{lem:low}
Let $F_i,G_i,f$ be as above. Then there exists a constant $w=w(F_i,G_i)$ such for any $W,(s_0,t_0),\c{D}$ 
satisfying (C1)--(C4)
as above, we have
$$D_W(x)\gg x^2.$$ 
Here
the implied constant depends on
$F_i,G_i,s_0,t_0,\c{D},w$ and $W$, but not on $x$.
\end{proposition}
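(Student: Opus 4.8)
The plan is to prove the lower bound $D_W(x)\gg x^2$ by a two-step strategy: first replace each character sum $\sum_{d\mid F_i(s,t)_W}\bigl(\tfrac{G_i(s,t)}{d}\bigr)$ by its natural nonnegative counterpart, namely twice the number of ideal divisors $\fa\mid (b_is-\theta_it)$ with $\n\fa$ coprime to $DW$ that split completely in $L_i=K_i(\sqrt{g_i(\theta_i)})$, using Lemma~\ref{lem:reuss}(iii); and second, detect a positive proportion of $(s,t)$ for which every factor is bounded below. Concretely, condition (C4) forces $\bigl(\tfrac{G_i(s,t)}{F_i(s,t)_W}\bigr)=1$ on the congruence class $(s,t)\equiv(s_0,t_0)\bmod W$, which by Lemma~\ref{lem:reuss}(iii) means $h_{D_iW}(s,t)=\sum_{\fa\mid(b_is-\theta_it),\,\gcd(\n\fa,D_iW)=1}\psi_i(\fa)$ with $\psi_i$ the relative Artin character. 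By positivity of the full divisor sum this equals the divisor sum truncated to completely split $\fa$, and in particular is nonnegative and $\geq 1$; the factor $f(F_i(s,t)_W)=\prod_{p\mid F_i(s,t)_W}(1-2/p)$ is positive once $w$ is large enough that no prime $\leq w$ divides $F_i(s,t)_W$. The delicate point is that $f$ can be close to zero, so one cannot simply bound each summand below by a constant.

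The main step is therefore a restriction to a sub-family of $(s,t)$ on which $\prod_i F_i(s,t)_W$ has \emph{few and large} prime factors, so that $f(F_i(s,t)_W)$ stays bounded away from $0$ \emph{and} the completely-split divisor count is $\Omega(1)$. I would fix a large parameter $z$ (depending on $w$ but not $x$) and restrict to $(s,t)$ in a suitable sublattice/congruence class forcing each $F_i(s,t)$ to be coprime to all primes in $(w,z]$; the density of such $(s,t)$ in $x\c{D}$ is $\gg_{w,z} x^2$ by Lemma~\ref{lem:historical} applied with an appropriate modulus $e=\prod_{w<p\leq z}p$ and the local densities $\prod_p(1-\bar\rho_i(p)/\dots)$ being positive (since $\c{D}$ and $(s_0,t_0)$ avoid the zeros of $\prod F_i$, conditions (C2),(C3)). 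On this sub-family $f(F_i(s,t)_W)\gg 1$ uniformly, so it remains to show
$$
\sum_{\substack{(s,t)\in\Zp^2\cap x\c{D}\\ (s,t)\equiv(s_0,t_0)\bmod{W}\\ F_i(s,t)\text{ coprime to }(w,z]}}\ \prod_{i=1}^n\Bigl(\sum_{\substack{\fa\mid(b_is-\theta_it)\\ \gcd(\n\fa,D_iW)=1,\ \psi_i(\fa)=1}}1\Bigr)\ \gg\ x^2.
$$
Here I would isolate the contribution of the diagonal term $\fa=(1)$ in every factor; since $\sum_i d_i=4$, dropping all off-diagonal (nonnegative) terms already gives the count of $(s,t)$ in the sub-family, which is $\gg x^2$ — \emph{provided} the off-diagonal terms do not conspire to make the true identity go the other way, which cannot happen because every term is nonnegative. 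So the lower bound is in fact immediate once positivity and the positive density of the sub-family are in place; the genuinely load-bearing input is positivity, i.e.\ Lemma~\ref{lem:reuss}(iii) combined with (C4).

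The hard part will be the bookkeeping needed to guarantee (a) that $S_{\text{bad}}=\{p\leq w\}$ can be chosen so that for $p\leq w$ one has $p\mid W$ (condition (C1)) and simultaneously $F_i(s,t)_W$, $D_i$, $\n\mathfrak{f}_i$, $\Delta_{\theta_i}$, $D_{L_i/K_i}$ are all coprime to the primes where Lemma~\ref{lem:reuss} might fail — this forces $w$ to dominate $\prod_i D_i$ and all the resultants $\mathrm{Res}(F_i,F_j)$, $\mathrm{Res}(F_i,G_i)$, discriminants of the $F_i$, and the relative discriminants $D_{L_i/K_i}$; and (b) that the lattice-point count of Lemma~\ref{lem:historical} survives the simultaneous congruence conditions modulo $W$, modulo $\prod_{w<p\leq z}p$, and the primitivity condition, with a main term $\asymp x^2$ whose implied constant is allowed to depend on everything except $x$. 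A secondary subtlety is verifying that the local factors $1-\bigl(\bar\rho_1(p)+\dots+\bar\rho_n(p)\bigr)/(p+1)$ appearing implicitly are positive for the relevant primes, which holds for $p>w$ since $\bar\rho_i(p)\leq d_i$ and $\sum d_i=4<p+1$; for the finitely many small primes one absorbs everything into $W$. Once $w$ is fixed large enough for all of this, the argument closes with the constant depending on $F_i,G_i,s_0,t_0,\c{D},w,W$ as claimed.
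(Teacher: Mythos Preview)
Your proposal contains a genuine gap at its core. You claim that, under (C4), the character sum $r_i(s,t)=\sum_{\fa\mid(b_is-\theta_it)_W}\psi_i(\fa)$ ``equals the divisor sum truncated to completely split $\fa$, and in particular is nonnegative and $\geq 1$''. This is false. Condition (C4) only gives $\psi_i\bigl((b_is-\theta_it)_W\bigr)=1$; it does not prevent inert primes from dividing the ideal. If $(b_is-\theta_it)_W=\fp_1\fp_2$ with $\psi_i(\fp_1)=\psi_i(\fp_2)=-1$, then (C4) is satisfied but $r_i(s,t)=(1-1)(1-1)=0$. More generally, $r_i(s,t)=\prod_{\fp^e\|(b_is-\theta_it)_W}\sum_{j=0}^{e}\psi_i(\fp)^j$ vanishes whenever any inert prime appears to an odd power, and this occurs for a positive proportion of $(s,t)$ on any sublattice. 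So your lower bound of $1$ per summand fails, and restricting to $(s,t)$ with $F_i(s,t)$ coprime to $(w,z]$ does nothing to remedy this: the vanishing comes from primes $>z$.

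Your alternative ``diagonal term'' argument has the same defect. When you expand $\prod_i\sum_{\fa_i}\psi_i(\fa_i)$ and isolate $\fa_i=(1)$ for all $i$, the remaining terms carry signs $\prod_i\psi_i(\fa_i)\in\{\pm1\}$; they are not nonnegative, so you cannot drop them to get a lower bound. The displayed sum you write, with $\psi_i(\fa)=1$ in the summation condition, is actually an \emph{upper} bound for $r_i(s,t)$, so your inequality runs the wrong way.

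This is not a bookkeeping issue; it is why the paper's proof is substantially harder. The paper proceeds by Dirichlet's hyperbola trick, writing $r_i=r_i^{(0)}+r_i^{(1)}$ (with an extra middle piece $r_n^{(\infty)}$ for the last index), and then establishes an \emph{asymptotic formula} $D_{\mathbf{j}}(x)\sim c\,x^2$ for each small-divisor sum via the lattice point count in Lemma~\ref{lem:historical}, while bounding the middle contribution $D_\infty(x)=o(x^2)$ using the twisted Hooley $\Delta$-function (Lemma~\ref{lem:sofos}) and the Nair--Tenenbaum machinery (Lemma~\ref{t:NT}). The positivity of $c$ comes from the non-vanishing of a product of $L$-values, not from pointwise positivity of the summand.
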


Suppose that  
$\nu>\nu_p({W})$ for all $p\mid W$ and write
$W_0=\prod_{p\mid W } p^\nu$.
Then, since every summand in~\eqref{eq:low} is non-negative and
$F_i(s,t)_{W}=F_i(s,t)_{W_0}$ for all $1\leq i\leq n$, 
we conclude that 
$D_W(x)\geq D_{W_0}(x)$.
In this way
we see that it will suffice to prove the lower bound 
in Proposition  \ref{lem:low} under the assumption that 
$W=\prod_{p\mid W } p^\nu$ with
\[
\nu>\max_{\substack{1\leq i \leq n\\  p\mid W}}\{\nu_p(F_i(s_0,t_0))\}.
\] 
In this case the identity
$
F_i(s_0+p^\nu X,t_0+p^\nu Y)
\equiv 
F_i(s_0,t_0)
\md{p^\nu}
$
guarantees that  
$\nu_p(F_i(s,t))=\nu_p(F_i(s_0,t_0))$
for 
any $(s,t)$
 appearing in the outer summation of \eqref{eq:low} and any $p\mid W$.
Hence, for such $(s,t)$, we can always assume that 
\begin{equation}
\label{eq:lomatth}
F_i(s,t)_W=
|F_i(s,t)|
\prod_{p\mid W}p^{-\nu_p(F_i(s_0,t_0))}
.\end{equation}

\subsection{Dirichlet's hyperbola trick}
\label{s:dirihyptrick}
Let $i\in \{1,\dots,n\}$.
For any  $(s,t) \in \Z^2$ appearing in~\eqref{eq:low},
let
\[
r_i(s,t)
=
\sum_{k\mid  F_i(s,t)_W}
\left(
\frac{G_i(s,t)}{k}
\right).
\]
Then, possibly on enlarging $w$,  
it follows from Lemma \ref{lem:reuss} that
$$
r_i(s,t)
=\sum_{\substack{
\mathfrak{d}
\mid (b_i s-\theta_i t)
\\
\gcd(\n_i \fd,W)=1
\\
\fd\in \cP_i
}
}
\psi_i(\fd)
,
$$
where $\fd$ runs over   integral ideals of $K_i=\Q(\theta_i)$,
$\n_i$ denotes the ideal norm $\n_{K_i/\Q}$  
and $\cP_i=\cP_{K_i}$, in the notation of \eqref{eq:span}.
Furthermore, for all 
$(s,t)$ in~\eqref{eq:low},
we have
\[
\n_i \fd\leq \n_i(b_is-\theta_i t)
=|\tilde F_i (b_is,t)|
\leq c_i x^{d_i}
,\]
for some positive constant $c_i$ that depends at most on $F_i$ and $\c{D}$.
We define  
\[X=x \max\{ 
c_1^{\frac{1}{d_1}}, \dots, c_n^{\frac{1}{d_n}}\},
\]
so that the previous inequality becomes
$\n_i \fd \leq X^{d_i}$.

On relabelling the indices we may suppose that  $d_n=\min_{1\leq i\leq n} d_i$.
In particular, we have
\begin{equation}
\label{eq:relabel}
d_n
\leq  \min_{1\leq i\leq n}\deg(\Delta_i)
.\end{equation} 
Suppose that $n>1$. Then  for each $i\in \{1,\dots,n-1\}$
and $(s,t)$ appearing in~\eqref{eq:low}, we set
\begin{align*}
r_i^{(0)}(s,t)
&=\sum_{\substack{
\mathfrak{d}
\mid (b_i s-\theta_i t),~
\fd\in \cP_i\\ 
\\
\gcd(\n_i \fd,W)=1
\\
\n_i \fd\leq X^{\frac{d_i}{2}}
}
}
\psi_i(\fd),\quad 
r_i^{(1)}(s,t)
=
\sum_{\substack{
\mathfrak{e}
\mid (b_i s-\theta_i t), ~\fe\in \cP_i\\ 
\\
\gcd(\n_i \fe,W)=1
\\
\n_i \fe\leq X^{-\frac{d_i}{2}} F_i(s,t)_W
}
}
\psi_i(\fe).
\end{align*}
Dirichlet's hyperbola trick implies that 
\begin{equation}
\label{eq:dec}
r_i(s,t)
= 
r_i^{(0)}(s,t)+r_i^{(1)}(s,t).
\end{equation}
Indeed, if  $(b_is-\theta_i t)_W$ denotes the part of the ideal 
$(b_is-\theta_i t)$ that is composed solely of prime ideals whose norms are coprime to $W$, as in  \eqref{eq:aW},   then the sum in $r_i(s,t)$ is over ideals $\fd, \fe$ such that $\fd\fe=(b_is-\theta_i t)_W$.
 Recalling (C4),   
 it follows from part (ii) of Lemma  \ref{lem:reuss}  
that   $\psi_i((b_is-\theta_i t)_W)=1$. This  concludes the proof of \eqref{eq:dec}.

We proceed by  introducing the quantity
\begin{equation}
\label{eq:defL}
L=(\log x)^\alpha
,\end{equation}
for some $\alpha>0$ that will be determined in due course.
(When $n>1$ we shall take $\alpha$ to be a large constant, but when $n=1$ it will be important to restrict to $0<\alpha<1$.)
For $(s,t)$ appearing in~\eqref{eq:low},
we proceed by defining
\begin{align*}
r_n^{(0)}(s,t)
&=
\sum_{\substack{
\mathfrak{d}\mid (b_ns-\theta_n t), ~\fd\in \cP_n
\\
\gcd(\n_n \fd,W)=1
\\
\n_n\fd\leq L^{-1}
{X^{\frac{d_n}{2}}}
}
}
\psi_n(\mathfrak{d}),\qquad
r_n^{(1)}(s,t)=
\sum_{\substack{
\mathfrak{e}
\mid (b_ns-\theta_n t), ~\fe\in \cP_n
\\
\gcd(\n_n\fe,W)=1
\\
\n_n\fe\leq L^{-1}X^{-\frac{d_n}{2}} F_n(s,t)_W}}
\psi_n(\mathfrak{e})
\end{align*}
and 
\[
r_n^{(\infty)}(s,t)=
\sum_{\substack{
\mathfrak{d}\mid (b_ns-\theta_n t), ~\fd\in \cP_n
\\
\gcd(\n_n\fd,W)=1
\\
L^{-1}X^{\frac{d_n}{2}}
<
\n_n\fd
<
L X^{\frac{d_n}{2}}
}
}
\psi_n(\mathfrak{d})
.\]
As before, we may now write
\begin{equation}
\label{eq:decn}
r_n(s,t)
= 
r_n^{(\infty)}(s,t)
+
r_n^{(0)}(s,t)+r_n^{(1)}(s,t)
.
\end{equation}
For each
$\b{j}=(j_1,\ldots,j_n) \in \{0,1\}^n$,
we define 
\[
D_{\b{j}}(x)
=
\sum_{\substack{(s,t) \in \ZZp^2\cap x\c{D}\\ (s,t)\equiv (s_0,t_0)\bmod{W}}}
\prod_{i=1}^n
f(F_i(s,t)_W)
r_i^{(j_i)}(s,t)
,\]
and
$$
D_\infty(x) 
=
\sum_{\substack{(s,t) \in \ZZp^2\cap x\c{D}\\ (s,t)\equiv (s_0,t_0)\bmod{W}}}
r_n^{(\infty)}(s,t)
\prod_{i=1}^{n-1}
r_i(s,t),
$$
in which  we recall the definition 
\eqref{eq:i'm on a train} of $f$.
(Here, we recall our convention that products over empty  sets are equal to $1$.) 
Injecting~\eqref{eq:dec} and~\eqref{eq:decn} into~\eqref{eq:low} yields
\[
D_W(x)-
\sum_{\b{j} \in \{0,1\}^n} 
D_{\b{j}}(x)
\ll 
D_{\infty}(x)
.\]
The validity of Proposition~\ref{lem:low} is therefore assured, provided we can show that 
\begin{equation}
\label{eq:daniel}
D_{\b{j}}(x)
\gg x^2
\end{equation}
and
\begin{equation}
\label{eq:hool}
D_{\infty}(x)
=o(x^2).
\end{equation}
We shall devote \S\S\ref{ss:thin}--\ref{ss:nairhool} to the proof of \eqref{eq:hool} and \S \ref{ss:small} to the proof of \eqref{eq:daniel}.

\subsection{The generalised Hooley $\Delta$-function}
\label{ss:thin} 

In this subsection we initiate  the proof of 
 \eqref{eq:hool}.
Define 
\begin{equation}\label{eq:Ainfty}
A_n^{(\infty)}(x)
=
\left\{(s,t) \in \Zp^2\cap x\c{D}:
\begin{array}{l}
(s,t)\equiv (s_0,t_0)\bmod{W}
\\
\exists \fd \in \cP_n  \text{ such that:}\\
~\bullet~ \fd\mid (b_ns-\theta_n t)_W\\ 
~\bullet~L^{-1}X^{\frac{d_n}{2}}
<
\n_n\fd
< 
L
X^{\frac{d_n}{2}} 
\end{array}
\right\}.
\end{equation}
It  immediately follows that 
\[
D_{\infty}(x)
=
\sum_{(s,t) \in A_n^{(\infty)}(x)}
r_n^{(\infty)}(s,t)
\prod_{i=1}^{n-1}
r_i(s,t).
\]
Defining 
\begin{equation}
\label{def:Binfty}
B_{\infty}(x)
=
\sum_{(s,t) \in A_n^{(\infty)}(x)}
\prod_{i=1}^{n-1}
r_i(s,t)
,\end{equation}
we use Cauchy's inequality to arrive at
\[
D_{\infty}(x)
\leq 
B_{\infty}(x)^{\frac{1}{2}}
\l(
\sum_{(s,t) \in A_n^{(\infty)}(x)}
\Big|r_n^{(\infty)}(s,t)\Big|^2
\prod_{i=1}^{n-1}
r_i(s,t)
\r)^{\frac{1}{2}}
.\]

Recall the definition \eqref{eq:manannan} of the twisted Hooley $\Delta$-function
$\Delta(\fa;\psi_n)$ associated to the Dirichlet character $\psi_n$ and any integral ideal $\fa$.
Putting
\begin{equation}\label{eq:event_horizon}
H_{\infty}(x)
=
\sum_{\substack{(s,t) \in \ZZp\cap x\c{D}\\ (s,t)\equiv (s_0,t_0)\bmod{W}}}
\hspace{-0,3cm}
\Delta((b_ns-\theta_n t);\psi_n)_W^2 
\prod_{i=1}^{n-1}
r_i(s,t),
\end{equation}
and partitioning the interval
$(L^{-1}X^{\frac{d_n}{2}}, 
L
X^{\frac{d_n}{2}})$
into at most  $O(\log \log x)$ 
$\eul$-adic 
intervals,
we deduce  that  
\[
\sum_{(s,t) \in A_n^{(\infty)}(x)}
\Big|r_n^{(\infty)}(s,t)\Big|^2
\prod_{i=1}^{n-1}
r_i(s,t)
\ll
(\log \log x)^2
H_{\infty}(x)
.\]
In summary, we have shown that
\[
D_{\infty}(x)
\ll
(\log \log x)\sqrt{
B_{\infty}(x)
H_{\infty}(x)}
.\]
Therefore, in order to prove~\eqref{eq:hool},
it will be sufficient to prove
that there exists a constant $\delta>0$,
that depends only on the data  given at the start of \S\ref{s:lower}, 
such that
\begin{equation}
\label{eq:hool1}
B_{\infty}(x)
\ll x^2
(\log x)^{-\delta}
\end{equation}
and
\begin{equation}
\label{eq:hool2}
H_{\infty}(x)
\ll
x^2 (\log x)^{o(1)}
.\end{equation}
We shall 
call
$B_{\infty}(x)$
the \textit{interval sum}
and 
$H_{\infty}(x)$ 
 the \textit{Bret\`{e}che--Tenenbaum sum}. 

\subsection{The interval sum}
By recycling work of la Bret\`{e}che and Tenenbaum~\cite[\S 7.4]{bret-ten}, the 
case $n=1$ is easy to handle. 
Indeed, in this case $F_1$ is an irreducible quartic form  
and  \eqref{def:Binfty}  becomes  
$$
B_{\infty}(x)=\# A_1^{(\infty)}(x)\leq 
\sharp
\left\{(s,t) \in \Zp^2\cap x\c{D}:
\begin{array}{l}
(s,t)\equiv (s_0,t_0)\bmod{W}
\\
\exists \fd \in \cP_1  \text{ such that:}\\
~\bullet~ \fd\mid (b_1s-\theta_1 t)_W\\  
~\bullet~ X^2/L  
<
\n_1\fd
< 
L
X^2
\end{array}
\right\}.
$$
Note that assumption~(C2) 
ensures that 
$|F_1(s,t)| \asymp 1$ whenever $(s,t) \in \c{D}$.
Increasing $w$ so that 
every prime factor of $b_1$ also divides $W$,
shows that
\[
\tilde F_1(b_1s,t)_W=(b_1^{d_1-1}F_1(s,t))_W=F_1(s,t)_W.
\]
Thus it follows from ~\eqref{eq:lomatth} 
that 
$\tilde F_1(s,t)_W \asymp |F_1(s,t)|$, 
for implied constants that depend on 
$F_1,s_0,t_0,w$ and $W$.
Hence 
$$
\n_1((b_1s-\theta_1 t)_W)=\tilde F_1(b_1s,t)_W 
\asymp 
|F_1(s,t)| \asymp x^4\asymp X^4.
$$ 
Therefore, on introducing $\fe$ through the factorisation
$\fd\fe=(b_1 s-\theta_1 t)_W$,
we can infer that we must have either
\[
X^2/L
\ll
\n_1 \fd
\ll 
X^2
\quad
\text{ or }
\quad
X^2/L
\ll
\n_1 \fe
\ll 
X^2
.\]
Without loss of generality we shall assume that we are in the former setting.
Therefore there exist constants $c_0,c_1>0$ such that
$$
B_\infty(x)
\ll
\sharp
\left\{(s,t) \in \Zp^2\cap x\c{D}:
\begin{array}{l}
(s,t)\equiv (s_0,t_0)\bmod{W}
\\
\exists d\mid F_1(s,t) \text{ s.t.\
$c_0x^2/L
<
d
< 
c_1
x^2$}
\end{array}
\right\}.
$$
But now we can employ the bound~\cite[Eq.~(7.41)]{bret-ten},
with
$$
T=F_1,\quad \Xi= \xi=x, \quad y_1=c_0x^2/L, \quad y_2=c_1x^2, \quad \text{ and } \quad 1\ll \sigma,\vartheta\ll 1.
$$
This implies that for any $\eta \in (0,\frac{1}{2})$, 
we have 
\[
B_\infty(x)\ll
x^2
\l(\frac{L}{(\log x)^{Q(2\eta)}}
+
\frac{\log \log x}{(\log x)^{Q(1+\eta)}}
\r)
,\]
where $Q(\lambda)=\lambda \log \lambda - \lambda +1$.
In particular, $Q(2\eta)\to 1$ as 
$\eta\to 0+$ and 
$Q(1+\eta)>0$ for all $\eta>0$.
Recalling the definition~\eqref{eq:defL} of $L$, this
means that provided $\alpha<1$, 
 we may choose $\eta>0$ small enough (but away from $0$), so as to ensure that \eqref{eq:hool1} holds when $F$ is irreducible. 

It remains to establish \eqref{eq:hool1} when 
$n>1$.  
In this case \eqref{eq:relabel} implies that $d_n=\deg(F_n)\leq 2$.
Fix $\eta \in (0,1)$.
To estimate $B_\infty(x)$,
drawing inspiration from~\cite[\S 9.3]{bret-ten},
 we shall divide
the terms in the sum~\eqref{def:Binfty}
into two categories.

\subsubsection*{First case:  $(b_n s-\theta_n t)$ has  many prime divisors}

We denote by 
$B_{\infty}^{(1)}(x)$ the contribution 
to 
$B_{\infty}(x)$ from $(s,t)$ for which 
$
\Omega_{n}((b_ns-\theta_n t)_W)>(1+\eta) \log \log x,
$ 
where $\Omega_n(\fa)=\Omega_{K_n}(\fa)$ is the total number of prime ideal 
factors of an ideal $\fa\subset \fo_{K_n}$. 
Recall that, as in~\S\ref{s:dirihyptrick},
we denote $\n_{K_n}(\fa)$
by $\n_{n}(\fa)$.
We have
\begin{equation}
\label{eq:sumst}
B_{\infty}^{(1)}(x)
\leq 
(\log x)^{-(1+\eta) \log(1+\eta)}
\hspace{-0,3cm}
\sum_{(s,t) \in \ZZp^2\cap x\c{D}}
(1+\eta)^{\Omega_n((b_n s-\theta_n t)_W)}
\prod_{i=1}^{n-1}
r_i(s,t),
\end{equation}
since $(1+\eta)^{-(1+\eta)\log \log x}=(\log x)^{-(1+\eta) \log(1+\eta)}$.
Our plan is now to apply Lemma~\ref{t:NT}
for $N=n$, with 
$f_N(\fa)=(1+\eta)^{
\Omega_n(\fa_W) 
}$ and 
$$
f_i(\fa)=\sum_{\substack{\fd\mid \fa\\ \fd\in \cP_i}}\psi_i(\fd), 
$$
for $i< N$.
Fix any
$\ve>0$.
It is easy to see that if $i<N$ then there exists 
$B>0$ 
such that $f_i\in \c{M}_{K_i}(2,B,\ve)$. Thus,  in the notation of Lemma~\ref{t:NT},  one can take 
\begin{equation}
\label{eq:iln}
i<N\Rightarrow 
\epsilon_i=\epsilon
.\end{equation}
When $i=N$, however,  we will show that for every $\ve>0$ there exists $w$ such that if $W$ is given by~\eqref{eq:WW}
then \[(1+\eta)^{\Omega_n(\fa_W)} \in \c{M}_{K_n}(1+\eta,1,\ve).\]
Indeed, we have 
\[(1+\eta)^{\Omega_n(\fa_W)}
=\prod_{\substack{\fp^\xi\| \fa \\\gcd(\n_n \fp,W)=1}}(1+\eta)^\xi
\leq 
\prod_{\substack{\fp^\xi\| \fa \\ \n_n \fp >w }}(1+\eta)^\xi .
\]
Taking $w\geq 2^{1/\ve}$, so that 
$(1+\eta)\leq w^\ve$,  yields
\[
\prod_{\substack{\fp^\xi\| \fa \\ \n_n \fp >w }}(1+\eta)^\xi 
\leq 
\prod_{\substack{\fp^\xi\| \fa \\ \n_n \fp >w }}w^{\ve\xi} 
\leq 
\prod_{\substack{\fp^\xi\| \fa \\ \n_n \fp >w }}(\n_n \fp)^{\ve\xi} 
\leq (\n_n \fa)^\ve
.\]
This means that
 in the notation of Lemma~\ref{t:NT} one can take 
\begin{equation}
\label{eq:ilndio} 
\epsilon_N=\epsilon
.\end{equation}
Furthermore, we shall take $G=\ZZ^2$  and $\c{R}=x\c{D}$.
Thus  $q_G=1$,
$\c{R}$ is regular and 
we have $V\asymp x^2$ and $K_{\c{R}}\asymp x \log x$, in the notation 
of the lemma.  
This means that for large $x$ we can take $c_1=1$, hence by~\eqref{eq:egg}, \eqref{eq:iln} and~\eqref{eq:ilndio} 
we have 
\[
\sum_{i=1}^N d_i \ve_i=4\ve.
\]
Therefore, assuming that $\ve\in (0,1)$ is fixed, 
the relevant constant  in Lemma~\ref{t:NT}
is 
$\ve_0=
\max\{5,20+12\ve\} 4\ve
\leq 199 \ve$.
This   shows that if  $\ve$ is fixed and 
$200 \ve <1/3$ then 
\[
\frac{K_{\c{R}}^{1+\ve_0+\ve}}{\lambda_G}
\ll (x \log x)^{1+200 \ve} \ll x^{3/2}
,\] hence the secondary term
of Lemma~\ref{t:NT}
makes a satisfactory contribution.
The contribution
of 
the first term of Lemma~\ref{t:NT}
towards the sum in~\eqref{eq:sumst}
is 
\begin{align*}
&\ll 
\frac{x^2}{(\log x)^n}
\exp\l(\sum_{i =1}^{n-1}
\sum_{\substack{\fp\in \cP_i^\circ \\ 
\n_i\fp \ll x^2}}\frac{1+\psi_i(\fp)}{\n_i \fp}
+
(1+\eta)
\sum_{\substack{\fp\in \cP_n^\circ \\ 
\n_n\fp\ll x^2}}\frac{1}{\n_n \fp}\r)\\
&\ll
\frac{x^2}{(\log x)^n}
\exp((n-1)\log \log x+(1+\eta) \log \log x)\\
&\ll x^2 (\log x)^{\eta}.
\end{align*}
The proof of these estimates is standard and will not be repeated here. (See Heilbronn \cite{H}, for example.)
Thus
$B_{\infty}^{(1)}(x)
\ll
x^2 (\log x)^{-(1+\eta)\log (1+\eta)+\eta}$.
The exponent of the logarithm is 
strictly negative for all $\eta>0$, which is clearly sufficient for \eqref{eq:hool1}.

\subsubsection*{Second case:
$(b_n s-\theta_n t)$ has  few prime divisors}
We denote by $B_\infty^{(2)}(x)$ the contribution to $B_{\infty}(x)$ from $(s,t)$ for which 
$
\Omega_n((b_n s-\theta_n t)_W) \leq (1+\eta) \log \log x.
$
Recall from the definition \eqref{eq:Ainfty} 
of $A_n^{(\infty)}(x)$ 
that there exists $\fd \in \cP_n$ such that 
$\fd \mid (b_n s-\theta_n t)$, with 
$\gcd(\n_n\fd,W)=1$ and 
\[
L^{-1}X^{\frac{d_n}{2}}
<
\n_n\fd
<
L
X^{\frac{d_n}{2}}
.\]
Condition (C3) ensures that 
$
\n_n((b_ns-\theta_n t)_W)\asymp X^{d_n}
$.
Defining $\fe$ via the factorisation
$\fd\fe=(b_n s-\theta_n t)_W$,
we can then infer 
that $\gcd(\n_n\fe,W)=1$ and $\fe\in \cP_n$, with
$
L^{-1}X^{\frac{d_n}{2}}
\ll
\n_n\fe
\ll
L
X^{\frac{d_n}{2}},
$
where the implied constants  depend at most on $\c{D}$ and $F_n$.
Note
that 
\[
\Omega_n(\fd)
+
\Omega_n(\fe)
=\Omega_n((b_n s-\theta_n t)_W)
\leq 
(1+\eta) \log \log x.
\]
Thus, either 
$\Omega_n(\fd)
\leq  
\frac{1}{2}
(1+\eta) \log \log x$,
or 
$\Omega_n(\fe)
\leq \frac{1}{2}
(1+\eta) \log \log x$.
We will assume without loss of generality that we are in  the latter case.

It follows that 
\[
B_{\infty}^{(2)}(x)
\ll
\sum_{\substack{
\fe\in \cP_n\\
L^{-1}X^{\frac{d_n}{2}}
\ll
\n_n\fe
\ll
L
X^{\frac{d_n}{2}}
\\ 
\Omega_n(\fe)
\leq \frac{1}{2}
(1+\eta) \log \log x
\\ 
\gcd(\n_n\fe,W)=1
}}
B_{\fe}(x),
\]
where
\[
B_{\fe}(x)
=
\sum_{\substack{
(s,t) \in \ZZp^2\cap x\c{D}
\\
(s,t)\equiv (s_0,t_0)\bmod{W}
\\
\fe | (b_ns -\theta_n t) 
}}
\prod_{i=1}^{n-1}
r_i(s,t).
\]
This is a non-archimedean version of Dirichlet's hyperbola trick, where instead of looking at the complimentary divisor 
to reduce the size, we have tried to reduce the number of prime divisors.
Lemma \ref{lem:honey} implies that  the condition
$\fe \mid (b_ns -\theta_n t)$
defines a lattice in $\Z^2$
of determinant $e=\n_n\fe$, which we shall call $G$. 
Hence we may write
$$
B_{\fe}(x) 
=
\sum_{\substack{
(s,t) \in \Zp^2\cap x\c{D} \cap G
\\
(s,t)\equiv (s_0,t_0)\bmod{W}
}}
\prod_{i=1}^{n-1}
r_i(s,t).
$$
Let  $\b{v}\in \Z^2$ be such that $|\b{v}|=\max\{|v_1|,|v_2|\}$ is the first successive minimum of  $G$. 
Lemma~\ref{t:NT}
can  be applied with $\c{R}=x\c{D}$,
$q_G=e$,
$N=n-1$,
and 
\[
f_i(\fa)=\sum_{\fd\mid \fa}\psi_i(\fd), 
\]
for $1\leq i\leq n-1$. 
For such $f_i$ one can take $\ve_i$ in Lemma~\ref{t:NT} to be arbitrarily small, whence 
\[
B_\fe(x)\ll
x^2\frac{h^*(e)}{e}
+\frac{x^{1+\ve}}{|\b{v}|},
\]
for any $\ve>0$, where 
$$
h^*(e)=\prod_{p\mid k} \left(1-\frac{\bar\rho_1(p)+\dots +\bar\rho_{n-1}(p)}{p+1}\right)^{-1}.
$$
(Note that $h_W^*(e)=h^*(e)$, since $\gcd(e,W)=1$.)

We have 
$e=\n_n\fe\ll LX^{\frac{d_n}{2}}$ and so
$|\b{v}|\ll \sqrt{LX^{\frac{d_n}{2}}}\leq \sqrt{LX}$, since $d_n\leq 2$. Since $F_n$ is irreducible, we note that 
 $d_n=1$ when  $F_n(\v)=0$.
Define  
$g(e)=\#\{\fe\in \cP_n: \n_n\fe=e\}.$ 
The second term is therefore seen to  make the overall contribution
\begin{align*}
&\ll
x^{1+\ve}
\sum_{\substack{
|\v|\ll \sqrt{LX} \\
F_n(\v)\neq 0}} \frac{1}{|\v|} \sum_{e\mid F_n(\v)} g(e)
+
x^{1+\ve}
\sum_{\substack{
|\v|\ll \sqrt{LX} \\
F_n(\v)= 0}} \frac{1}{|\v|} \sum_{e\ll L\sqrt{X}} g(e)
\ll 
x^{\frac{3}{2}+2\ve},
\end{align*}
which is satisfactory.

Next,  the  overall contribution from the term $x^2 h^*(e)/e$
is  $O( x^2 \Sigma)$, where
\[
\Sigma=
\sum_{\substack{
L^{-1}X^{\frac{d_n}{2}}
\ll
e
\ll
L
X^{\frac{d_n}{2}}
\\ 
\Omega(e)
\leq \frac{1}{2}
(1+\eta) \log \log x
\\ 
\gcd(e,W)=1}}
\frac{g(e)h^*(e)}{e}
.\]
Letting $A=
\l(\frac{1+\eta}{2}\r)^{-1}
>1$,
we get
\[
\Sigma\ll
(\log x)^{\frac{\log A}{A}}
\sum_{\substack{
L^{-1}X^{\frac{d_n}{2}}
\ll
e
\ll
L
X^{\frac{d_n}{2}}
\\ 
\gcd(e,W)=1}}
\frac{g(e)h^*(e)}{e} A^{-\Omega(e)}
.\]
Put 
$$
S(y)=
\sum_{\substack{
e
\leq y
\\ 
\gcd(e,W)=1}}
g(e)h^*(e)A^{-\Omega(e)}.
$$
Then it follows from Shiu's work \cite{shiu} 
that 
\begin{align*}
S(y)
\ll   \frac{y}{\log y}
\exp\l(A^{-1}\sum_{\substack{p\leq y\\ p\nmid W}}\frac{g(p)h^*(p)}{p}\r)
&\ll
 \frac{y}{\log y}
\exp\l(A^{-1}\sum_{\substack{p\leq y\\ p\nmid W}}\frac{\bar\rho_n(p)}{p}\r)\\
&\ll
y(\log y)^{\frac{1}{A}-1}.
\end{align*}
Partial summation now leads to the estimate
\begin{align*}
B_\infty^{(2)}(x)
&\ll
x^2
(\log \log x)
(\log x)^{\frac{\log A}{A}+\frac{1}{A}-1}\\
&=
x^2
(\log \log x)
(\log x)^{\frac{\eta-1}{2}-(\frac{1+\eta}{2})\log (\frac{1+\eta}{2})}.
\end{align*}
The exponent of $\log x$ 
is strictly negative for all $\eta\in (0,1)$, which thereby  completely settles the proof of~\eqref{eq:hool1}.

\subsection{The Bret\`{e}che--Tenenbaum sum}   
\label{ss:nairhool}
We saw in \S \ref{s:twist}
that the Hooley $\Delta$-function defined in \eqref{eq:manannan} belongs to $\mathcal{M}_n$.
The stage is now set for an application of Lemma~\ref{t:NT}
with
$N=n$ and $G=\ZZ^2$, and with
$f_N(\fa)=
\Delta(\fa;\psi_n)^2$ and 
$
f_i(\fa)=\sum_{\fd\mid \fa} \psi_i(\fd),
$
for $i<N$.
For such $f_i$ one can take $\ve_i$ in Lemma~\ref{t:NT} to be arbitrarily small, whence 
this  gives
$$  
H_{\infty}(x)
\ll
\frac{x^2}{\log x}
E_{\Delta(\cdot;\psi_n)^2}(x^2;W) 
$$
in \eqref{eq:event_horizon}.
The statement of  \eqref{eq:hool2} now follows from  part (ii) of 
Lemma \ref{lem:sofos}. 

\subsection{Small divisors}\label{ss:small}
In this subsection we establish 
 \eqref{eq:daniel}, as required to complete the proof of Proposition \ref{lem:low}.
When $n>1$, 
the proof 
 follows from the treatment in \cite{FS} and 
will not be repeated here.   
Thus, provided that one takes 
 $\alpha$ to be sufficiently large 
in the definition \eqref{eq:defL} of $L$, one gets an asymptotic formula for 
$D_{\b{j}}(x)$  with a  logarithmic  saving in the error term.
The proof of \eqref{eq:daniel} when $n=1$ is more complicated. 
In this case 
$F_1$ is an irreducible binary quartic form. In order
to simplify the notation, 
we shall drop the index $n=1$ in what follows 
(in particular, we shall denote $\cP_{K_1}=\cP_1$ by $\cP$).
Our task is to estimate
\[
D_j(x)
=
\sum_{\substack{(s,t) \in \ZZp^2\cap x\c{D}\\ (s,t)\equiv (s_0,t_0)\bmod{W}}}
f(F(s,t)_W)
r^{(j)}(s,t)
,\]
for $j\in\{0,1\}$. 
Opening up the definition of  $f(F(s,t)_W)$, 
it follows from parts (i) and (ii) of Lemma \ref{lem:reuss} that 
$$
f(F(s,t)_W)=\sum_{\substack{e\mid F(s,t)\\ \gcd(e,W)=1}} \frac{\tau(e)\mu(e)}{e}
=\sum_{\substack{\fe \mid (bs-\theta t)\\ \gcd(\n\fe,W)=1
\\ \fe\in \cP}}
 \frac{\tau(\fe)\mu(\fe)}{\n\fe},
$$
since $\tau(\n\fe)=\tau_{K_1}(\fe)=\tau(\fe)$, say,  for any  
$\fe\in \mathcal{P}$.

Let $y>0$.
The overall contribution to $D_j(x)$ from $\fe$ such that $\n\fe>y$ is 
$$
\ll 
\sum_{\substack{y<\n \fe\ll x^4\\ \gcd(\n\fe,W)=1
\\ \fe\in \cP}}
 \frac{\tau(\fe)|\mu(\fe)|}{\n\fe}
\sum_{\substack{(s,t) \in \ZZp^2\cap x\c{D}\\ 
\fe \mid (bs-\theta t)}}
r^{(j)}(s,t).
$$
The condition $\fe \mid (bs-\theta t)$ defines a lattice in $\ZZ^2$ of determinant $\n\fe$ by Lemma~\ref{lem:honey}. Thus  we can apply Lemma~\ref{t:NT}, finding that
$$
\sum_{\substack{(s,t) \in \ZZp^2\cap x\c{D}\\ 
\fe \mid (bs-\theta t)}}
r^{(j)}(s,t)\ll 
x^2\frac{h_W^*(\n\fe)}{\n\fe}+x^{1+\frac{\ve}{2}},
$$
for any $\ve>0$, where $h^*$ is given by \eqref{eq:faggot} with $N=1$.
Hence we arrive at the overall contribution 
\begin{align*}
&\ll 
x^2\sum_{\substack{\n \fe>y}}
(\n\fe)^{-2+\ve}
+
x^{1+\frac{\ve}{2}}\sum_{\substack{\n \fe\ll x^4}} (\n\fe)^{-1+\frac{\ve}{8}}
 \ll 
\frac{x^{2}}{\sqrt{y}}
+
x^{1+\ve},
\end{align*}
from $\n\fe>y$. Taking $y=\log\log x$, we therefore conclude that 
$$
D_j(x)=
\sum_{\substack{\n\fe\leq \log\log x\\ \gcd(\n\fe,W)=1
\\ \fe\in \cP}}
 \frac{\tau(\fe)\mu(\fe)}{\n\fe}
\sum_{\substack{(s,t) \in \ZZp^2\cap x\c{D}\\ 
\fe \mid (bs-\theta t)\\
(s,t)\equiv (s_0,t_0)\bmod{W}}}
r^{(j)}(s,t) +O\left(\frac{x^2}{\sqrt{\log\log x}}\right).
$$
Note that by enlarging $w$ we may assume that any  
prime factor of $b$ is present in the factorisation of $W$.

We henceforth  focus on the case $j=0$, the case $j=1$ being similar. 
First, we define for any 
$\fa\in \c{P}$ with $\gcd(\n\fa,W)=1$ the set 
\[
\c{H}(\fa)
=\big\{(s,t)\in \Z^2:\fa \mid (bs-\theta t)\big\}.
\]
By Lemma \ref{lem:honey}
there exists  $k=k(\fa)\in \Z$ such that
a vector $(s,t) \in \Z^2$
belongs to $\c{H}(\fa)$
if and only if 
$\n \fa \mid bs-k t$.  
Therefore, $\c{H}(\fa)$ is a lattice in $\Z^2$ of determinant 
$\n \fa$.  
Recalling the definition of $r^{(0)}(s,t)$ we obtain
\begin{equation}\label{eq:mould}
\begin{split}
D_{0}(x)
=~&
\sum_{\substack{\n\fe\leq \log\log x\\ \gcd(\n\fe,W)=1
\\ \fe\in \cP}}
 \frac{\tau(\fe)\mu(\fe)}{\n\fe}
\sum_{\substack{
\n\fd\leq L^{-1}X^2
\\
\gcd(\n \fd,W)=1\\
\fd\in \cP
}
}
\psi (\mathfrak{d})
\sum_{\substack{(s,t) \in \ZZp^2\cap x\c{D}\\ 
(s,t) \in \c{H}(\fd) \cap \c{H}(\fe)\\
(s,t)\equiv (s_0,t_0)\bmod{W}}}1\\
&+
O\left(\frac{x^2}{\sqrt{\log\log x}}\right).
\end{split}
\end{equation} 
In fact, for coprime integers $s,t$, 
part (i) of Lemma \ref{lem:reuss} ensures that  we only have 
$(s,t) \in \c{H}(\fd) \cap \c{H}(\fe)$ if the least common multiple 
$[\fd,\fe]$ of $\fd$ and $\fe$ belongs to $ \cP$.
It now follows from Lemma \ref{lem:honey} that there exists 
$k=k(\fd,\fe)\in \ZZ$ 
such that 
$(s,t) \in \c{H}(\fd) \cap \c{H}(\fe)$ 
if and only if 
$bs\equiv kt \bmod{M}$,
 where $M=[\n\fd,\n\fe]$ is the least common multiple  of $\n\fd$ and $\n\fe$.
We let  $\b{v}(M)=\b{v}(M;\fd,\fe)$ denote a fixed non-zero vector $(s,t)\in \ZZ^2$ of minimal length such that $bs\equiv kt \bmod{M}$.

Note that   $\gcd(b,k,M)=1$, 
since $\gcd(M,W)=1$ and we chose $W$ in such a way that any prime factor of $b$ also divides $W$.
The inner sum over $s,t$ is now in a form that is suitable for  Lemma \ref{lem:historical}, with 
$c=k$,  $e=M$ and
$$
 1\ll \delta(\cD)\ll 1, \quad \beta, \gamma\ll 1.	
$$
Arguing as in \cite[\S\S 4.3--4.5]{FS}, 
once inserted into \eqref{eq:mould},
the contribution from the main term 
(denoted by $M_{\boldsymbol{\psi}}$ in \cite{FS})
in
Lemma \ref{lem:historical} is  $\gg x^2$.
 This is satisfactory for \eqref{eq:daniel}.
It remains to consider the effect of substituting the error term 
in Lemma \ref{lem:historical}.

Let 
\[
r^*(m)=\#\{\fa\in \cP^\circ: \n\fa=m,~ \gcd(\n\fa,W)=1\},  
\]
for any $m\in \NN$, where we recall that  $\cP^\circ$ is the multiplicative 
span of prime ideals with residue degree $1$.
This function is multiplicative and has constant  average order.
We claim that 
$
r^*(cd)\leq r^*(c)r^*(d)
$
for all $c,d \in \N$, which we shall keep in use throughout this subsection. 
It is enough to consider the case
$c=p^a$ and $d=p^b$ for a rational prime
$p\nmid W$ 
with $r^*(p)\neq 0$.
Letting $\fp_1,\ldots,\fp_{m+1}$ be  
all the degree $1$ prime ideals above $p$, we easily see that 
$r^*(p^k)={ k+ m \choose m }$.
We therefore have to verify that 
\[
{ a+b+m \choose m }
\leq
{ a+m \choose m }
{ b+m \choose m },
\] 
for all integers 
$a,b,m\geq 0$. This is obvious when $m=0$. When $m\geq 1$ the inequality is 
equivalent to 
\[1\leq \prod_{i=1}^m\frac{(a+i)(b+i)}{i(a+b+i)},\] 
the validity of which is clear.  
 
The error term in Lemma \ref{lem:historical} is composed of two parts. 
 According to  \eqref{eq:paingiver}, the second part contributes 
\begin{align*}
&\ll x
\sum_{\substack{
\n\fd\ll x^2/L\\
\n\fe\leq \log\log x}}
 \frac{\tau(\fe)|\mu(\fe)|}{\n\fe}\cdot \frac{1}{M}
\sum_{d\mid M} \sqrt{d},
\end{align*}
with $M=[\n\fd,\n\fe].$ Taking $M\geq \n\fd=q$, say, and 
$$
\sum_{d\mid M} \sqrt{d}\leq \tau(\n\fe)\sqrt{\n\fe}
\sum_{d\mid q} \sqrt{d},
$$
we conclude that
the second part contributes
\begin{align*}
\ll x
\sum_{\substack{
q\ll x^2/L\\
\n\fe\leq \log\log x
}}
 \frac{\tau(\fe)^2|\mu(\fe)|}{\sqrt{\n\fe}}\frac{r^*(q)}{q}
\sum_{d\mid q} \sqrt{d}
&\ll x \log\log x
\sum_{\substack{q\ll x^2/L}}
 \frac{r^*(q)}{q}
\sum_{d\mid q} \sqrt{d}\\
&\ll x \log\log x
\sum_{\substack{cd\ll x^2/L}}
 \frac{r^*(c)r^*(d)}{c\sqrt{d}}\\
 &\ll x \log\log x
\sum_{\substack{c\ll x^2/L}}
 \frac{r^*(c)}{c}\sqrt{\frac{x^2}{cL}}\\
 &\ll L^{-\frac{1}{2}}x^2 \log\log x,
\end{align*}
in \eqref{eq:mould}. This is satisfactory for any $\alpha>0$ in \eqref{eq:defL}.

Finally, the overall contribution from the first part of the error term of
Lemma \ref{lem:historical} is
\begin{align*}
&\ll x
\sum_{\substack{
\n\fd\ll x^2/L\\
\n\fe\leq \log\log x\\
[\fd,\fe]\in \cP\\
\gcd(\n\fd\n\fe,W)=1
}}
 \frac{\tau(\fe)|\mu(\fe)|}{\n\fe}
\sum_{\substack{u \in \NN \\ u\mid M}}
\frac{1}{u|\b{v}(M/u)|} 
\log \left(2+\frac{ x}{u|\b{v}(M/u)|}\right).
\end{align*}
Here we recall that $\v(M/u)$ is a vector $(s,t)\in \ZZ^2$ of minimal length 
for which $bs\equiv kt \bmod{M/u}$.
In particular it also depends on $\fd$ and $\fe$ since $k$ does.
Put  $d=\n\fd$ and $e=\n\fe$, so that $M=[d,e].$  
If  $u\mid [d,e]$ then we 
claim that there is a factorisation $u=u'u''$ such that 
$u'\mid d$, 
$u''\mid e$ and such that $d/u'$ divides
$ [d,e]/u$.
To see this let $\nu_p(d)=\delta$ and $\nu_p(e)=\epsilon$ for any prime $p$. 
If $u\mid [d,e]$ then $\nu_p(u)\leq \max\{\delta,\epsilon\}$ for any prime $p$.
We take
$$
u'=\prod_{p^\nu \| u} p^{\min\{\nu,\delta\}} 
\quad \text{ and } \quad 
u''=\prod_{p^\nu \| u} p^{\nu-\min\{\nu,\delta\}}.
$$
It is clear that $u'\mid d$ and $u''\mid e$. Moreover, one easily checks that 
$$
\nu_p(d/u')=\delta-\min\{\nu,\delta\} 
\leq 
\max\{\delta,\epsilon\}-\nu=
\nu_p([d,e]/u),
$$
for any prime $p$, whence
 $d/u'\mid  [d,e]/u$.
In particular, this implies that $$
|\v([d,e]/u;\fd,\fe)|\geq |\v(d/u';\fd,\fe)|.
$$

Our argument so far shows that the term in which we are interested is
\begin{equation}\label{eq:gwr}
\ll x
\sum_{\substack{
e\leq \log\log x}}
\frac{\tau(e)^2}{e}
\sum_{
\substack{\fe\in \cP\\ 
\n\fe=e
}}
S(\fe),
\end{equation}
where
\begin{align*}
S(\fe)
&=
\sum_{\substack{d\ll x^2/L\\
\gcd(d,W)=1}}
\sum_{
\substack{\fd\in \cP\\ 
\n\fd=d
}}
\sum_{\substack{u'\mid d}}
\frac{1}{u'|\b{v}(d/u')|} 
\log \left(2+\frac{ x}{u'|\b{v}(d/u')|}\right)\\
&\leq
\sum_{\substack{u'\ll x^2/L\\ \gcd(u',W)=1}}
\frac{1}{u'}
\sum_{\substack{d'\ll x^2/(u'L) \\ \gcd(d',W)=1}}
\sum_{
\substack{\fd\in \cP\\ 
\n\fd=d'u'
}}
\frac{1}{|\b{v}(d')|} 
\log \left(2+\frac{ x}{|\b{v}(d')|}\right),
\end{align*}
with the caveat that 
$\v(d')$ still depends on $\fd$ and $\fe$. 
Moreover if there exists $\fd\in \cP$ with $\gcd(\n\fd,W)=1$ such that $\n\fd=d'u'$ then there exists 
 $\fd'\in \cP$ with $\gcd(\n\fd',W)=1$ such that $\n\fd'=d'$.
Hence $\fd '$ must divide
$(\b{v}(d')_1-\theta \b{v}(d')_2)$ and so it follows that 
$d' \mid F(\b{v}(d'))$.  
Furthermore, we note  that $|\v(d')|\ll \sqrt{d'}\ll x/\sqrt{L}$ in our upper bound for $S(\fe)$.

The contribution from $d',\fd$ for which $|\b{v}(d')|\leq x/(\log x)^\Upsilon$ is  
seen to be
\begin{align*}
&\ll \log x
\sum_{\substack{u'\ll x^2/L\\ \gcd(u',W)=1}}
\frac{r^*(u')}{u'}
\sum_{\substack{
\v=(v_1,v_2)\in \ZZ^2\\
0<|\b{v}|\leq x/(\log x)^\Upsilon}} \frac{1}{|\v|}
\sum_{\substack{d'\mid F(\v)}} r^*(d')\ll
x(\log x)
^{-\Upsilon+10},
\end{align*}
by \cite{nair}.  
Here we have used the fact that $r^*(d')\leq \tau_4(d')$ and 
\begin{equation}\label{eq:gwr2}
\sum_{\substack{u'\leq U}}
\frac{r^*(u')}{u'} \leq
 \sum_{\substack{u'\leq  U}}\frac{r_K(u')}{u'} \ll \log U,
\end{equation} where $r_K$ are the coefficients in the associated Dedekind zeta function. 
Once inserted into \eqref{eq:gwr} this contributes
$$
\ll x^2(\log x)^{-\Upsilon+10} \sum_{e\leq \log\log x} \frac{\tau(e)^2r^*(e)}{e}\ll 
x^2(\log x)^{-\Upsilon+9},
$$
which is satisfactory, 
on taking $\Upsilon$  sufficiently large.

In the opposite case, we plainly have 
$
d'\gg |\b{v}(d')|^2\geq x^2/(\log x)^{2\Upsilon
},
$
whence 
$\log (2+x/|\b{v}(d')|)\ll_\Upsilon \log \log x$. 
Moreover, the inequalities
$d'\ll x^2/(u'L)$
and $d'\gg  x^2/(\log x)^{2\Upsilon}$ together
provide us with 
$u'\ll (\log x)^{2\Upsilon}$. Thus it remains to study the  contribution
\begin{align*}
&\ll_\Upsilon \log\log x
\sum_{\substack{u'\ll (\log x)^{2\Upsilon}}}
\frac{1}{u'}
\sum_{\substack{x^2/(\log x)^{2\Upsilon}\ll d'\ll x^2/L \\ \gcd(d',W)=1}}
\sum_{
\substack{\fd\in \cP\\ 
\n\fd=d'u'
\\
|\b{v}(d')|\geq  x/(\log x)^\Upsilon
}}
\frac{1}{|\b{v}(d')|}\\
&\ll_\Upsilon \log\log x
\sum_{\substack{u'\ll (\log x)^{2\Upsilon}}}
\frac{1}{u'}
\sum_{\substack{
\v
\in \ZZ^2 \\
 |\b{v}|\ll x/\sqrt{L}}}
\frac{1}{|\v|}
\sum_{\substack{x^2/(\log x)^{2\Upsilon}\ll d'\ll x^2/L \\ \gcd(d',W)=1}}
\sum_{
\substack{\fd\in \cP\\ 
\n\fd=d'u'
\\
bv_1\equiv kv_2\bmod{d'}}}1,
\end{align*}
where we recall that $k$ depends on $\fd$ and $\fe$.  
For any 
  $\fd\in \cP$ with $\n\fd=d'u'$ and $\gcd(\n\fd,W)=1$, there is a factorisation
 $\fd=\fd_1\fd_2$ with 
  $\fd_1,\fd_2\in \cP$  such that 
 $\n\fd_1=d'$, $\n\fd_2=u'$. 
Hence
$$
\sum_{
\substack{\fd\in \cP\\ 
\n\fd=d'u'
\\
bv_1\equiv kv_2\bmod{d'}}}1\leq r^*(u')
\sum_{
\substack{\fd_1\in \cP\\ 
\n\fd_1=d'
\\
\fd_1\mid (bv_1- \theta v_2)}} 1,
$$
by Lemma \ref{lem:honey}. 
On appealing to 
 \eqref{eq:gwr2} to estimate the $u'$-sum, we are left with the contribution
\begin{align*}
&\ll_\Upsilon (\log\log x)^2
\sum_{\substack{
\v
\in \ZZ^2 \\
|\b{v}|\ll x/\sqrt{L}}}
\frac{1}{|\v|}
\sum_{
\substack{\fd_1\in \cP\\
\fd_1\mid (bv_1- \theta v_2)\\
x^2/(\log x)^{2\Upsilon}\ll \n\fd_1 \ll x^2/L \\ \gcd(\n\fd_1,W)=1
}} 1.
\end{align*} 
We will need to restrict the outer sum to a sum over primitive vectors in order to bring 
Lemma \ref{t:NT} into play. 
Let $h=\gcd(v_1,v_2)$ so that 
$\v=h\w$ for  $\w\in \Zp^2$.
Then 
$(bv_1- \theta v_2)
=
(h)(bw_1- \theta w_2)$, 
where $(h)$ is the principal ideal  
generated by $h$. By  unique factorisation, we have 
$\fd_1 \mid (h)(bw_1- \theta w_2)$ if and only if 
$$
\mathfrak{f}^{-1}\fd_1  \mid (bw_1- \theta w_2),
$$ 
where
$\mathfrak{f}$ is defined to be the greatest common ideal divisor of $\fd_1$ and $(h)$.
Writing $\fc=\mathfrak{f}^{-1}\fd_1$, we see that 
\[ 
\sum_{
\substack{\fd_1\in \cP\\
\fd_1\mid (bv_1- \theta v_2)\\
x^2/(\log x)^{2\Upsilon}\ll \n\fd_1 \ll x^2/L \\ \gcd(\n\fd_1,W)=1
}} 1
\leq  
\sum_{\substack{\mathfrak{f}\in \mathcal{P}\\ \mathfrak{f}\mid (h)\\
\gcd(\n 
\mathfrak{f},W)=1}}
\sum_{
\substack{\fc \in \cP\\
\fc \mid (bw_1- \theta w_2)\\
\frac{x^2}{(\log x)^{2\Upsilon}\n \mathfrak{f} }\ll \n\fc \ll \frac{x^2}{L \n \mathfrak{f}}
\\ \gcd(\n\fc,W)=1
}}1. 
\] 
Splitting into $\mathrm{e}$-adic intervals the inner sum
is easily seen to be 
$$\ll_\Upsilon ( \log\log x) \Delta((bw_1-\theta w_2)_W),
$$ 
where
$\Delta(\cdot)=
\Delta(\cdot,\mathbf{1})$, in the notation of 
~\S\ref{s:twist}.
Since there are at most $r^*(h)$ ideals $\mathfrak{f}\in \mathcal{P}$ such that $\mathfrak{f}\mid (h)$ and $\gcd(\n 
\mathfrak{f},W)=1$, we are left with the final contribution 
\begin{align*}
&\ll_\Upsilon (\log\log x)^3
\sum_{h} \frac{r^*(h)}{h}
\sum_{\substack{
\w
\in \Zp^2 \\
|\b{w}|\ll x/(h\sqrt{L})}}
\frac{\Delta((bw_1-\theta w_2)_W)}{|\w|}.
\end{align*}
Splitting into dyadic intervals, we now apply Lemma \ref{t:NT} with $G=\ZZ^2$, combined with 
part (i) of  Lemma \ref{lem:sofos}.  
Noting that one can take $\ve_1>0$ in Lemma~\ref{t:NT} to be arbitrarily small,
we deduce that the sum over $\w$ can be bounded by 
$$
\ll_\epsilon (\log x)^{\epsilon/2}
 \frac{x}{h\sqrt{L}}
$$
for any $\epsilon>0$. This leads to the overall bound
\begin{align*}
&\ll_{\epsilon,\Upsilon} \frac{x(\log x)^\epsilon}{\sqrt{L}}
\sum_{h} \frac{r^*(h)}{h^2} \ll_{\epsilon,\Upsilon} \frac{x(\log x)^\epsilon}{\sqrt{L}},
\end{align*}
which thereby completes the proof of 
 \eqref{eq:daniel}.

\section{The upper bound}
\label{s:epikoskafes}

This section is concerned with proving the upper bound in Theorem \ref{th:upper}. 
Let $X$ be a  quartic del Pezzo surface 
defined over $\Q$, containing a conic defined over $\Q$.
We continue to follow the convention that all implied constants are allowed to depend in any way upon the surface $X$. 

We appeal to \cite[Thm.~5.6 and Rem.~5.9]{FS}.  This shows that there are binary quadratic forms $q_{1,1}^{(i)},q_{1,2}^{(i)},q_{2,2}^{(i)}\in \ZZ[s,t],$ 
for $i=1,2$, 
such that \begin{equation}\label{eq:N1} N (B) \leq \sum_{i=1,2} \sum_{\substack{(s,t ) \in \ZZp^2  \\  |s|,|t| \ll \sqrt{B} \\ \Delta^{(i)}(s,t)\neq 0} } \#\left\{\y\in \ZZp^3:  Q_{s,t}^{(i)}(\y)=0, ~\|\y\|_{s,t}\ll B\right\},\end{equation}where $ \|\y\|_{s,t} = \max\{|s|,|t|\}\max\{|y_1|, |y_2|\}$ and 
$$Q_{s,t}^{(i)}(\y)=q_{1,1}^{(i)}(s,t)y_1^2+q_{1,2}^{(i)}(s,t)y_1y_2+q_{2,2}^{(i)}(s,t)y_2^2+y_3^2.
$$ 
Moreover,  the discriminant $\Delta^{(i)}(s,t)$ of $Q^{(i)}_{s,t}$ is a separable quartic form.  The indices $i=1,2$ are related to the existence of the two complimentary conic bundle fibrations. The two cases $i=1,2$ are treated identically and we shall therefore find it convenient to suppress the index $i$ in the notation. 
It is now clear that we will need a good upper bound for the number of rational points of bounded height on a conic, which is uniform in the coefficients of the defining equation,  a topic that was   addressed in \S \ref{s:sausage}.

\subsection{Application of the bound for conics}

Returning to \eqref{eq:N1}, we 
 apply 
Lemma~\ref{detectors 2} to estimate the inner cardinality. 
For any $(s,t)\in \ZZp^2$, 
an argument of  Broberg \cite[Lemma 7]{broberg} shows that 
 $D_{Q_{s,t}}=O(1)$. 
In our work $W$ is given by \eqref{eq:WW}, with $\nu=1$ and 
$w$  a large parameter depending only on $X$,
  which we will need to 
enlarge at various stages of the argument. In the first instance, we assume that 
$2D_{Q_{s,t}}<w\ll 1$. 
We   deduce that 
\begin{align*} 
N (B) \ll
\sum_{\substack{(s,t ) \in \ZZp^2  \\  |s|,|t| \ll \sqrt{B} \\ \Delta(s,t)\neq 0} } 
C(Q_{s,t},w)
\left(
1+\frac{B}
{|\Delta(s,t)|^{\frac{1}{3}}
\max\{|s|,|t|\}^{\frac{2}{3}}}
\right),
\end{align*}
for any $w>0$, where
$$
C(Q_{s,t},w)
\ll 
\prod_{\substack{p^\xi \| \Delta(s,t) \\ p\leq w}} \tau(p^\xi)
\prod_{\substack{p^\xi \| \Delta(s,t) \\ p>w}}
\l(\sum_{k=0}^\xi
\chi_{Q_{s,t}}(p)^k\r).
$$
Since 
$s, t\ll \sqrt{B}$ and  $\deg(\Delta)=4$, we see that 
\[
|\Delta(s,t)|^{\frac{1}{3}}
\max\{|s|,|t|\}^{\frac{2}{3}}
\ll
\max\{|s|,|t|\}^2\ll B
,\]
whence
\[
1+\frac{B}
{|\Delta(s,t)|^{\frac{1}{3}}
\max\{|s|,|t|\}^{\frac{2}{3}}
}
\ll
\frac{B}
{|\Delta(s,t)|^{\frac{1}{3}}
\max\{|s|,|t|\}^{\frac{2}{3}}
}
.\]

Now let 
\begin{equation}\label{eq:ham}
\Delta(s,t)=\prod_{i=1}^n
\Delta_i(s,t)
\end{equation}
be the factorisation of $\Delta(s,t)$ into irreducible factors over $\Q$.
Each $\Delta_i$ is separable and 
$\res(\Delta_i,\Delta_j)\neq 0$, whenever $i\neq j$.
We suppose that $X$ has $\delta_0=m$ split degenerate fibres and we re-order the factorisation of $\Delta(s,t)$ in such a way that the split degenerate fibres correspond to the closed points
$\Delta_1(s,t),\dots,\Delta_m(s,t)$, with the non-split fibres corresponding to the closed points
$\Delta_{m+1}(s,t),\dots,\Delta_n(s,t)$.
We enlarge $w$ so that 
$$
w>\max_{i\neq j}|\res(\Delta_i,\Delta_j)|.
$$
Loughran, Frei and  Sofos \cite[Part (5) of Lemma~4.8]{FS} 
have  shown that 
 for each $i>m$
there exists a binary form 
$G_i(s,t)\in \ZZ[s,t]$ of even non-negative degree, with 
$\res(G_i,\Delta_i)$ non-zero,
such that 
\[
\chi_{Q_{s,t}}(p)=
\l(
\frac{G_i(s,t)}{p}\r),
\]
for all   $(s,t)\in \ZZp^2$
with $\Delta(s,t)\neq 0$,  
and all
primes $p>w$ with 
$p\mid  \Delta_i(s,t)$.

We proceed by introducing the arithmetic functions
\begin{equation}\label{eq:offal}
\tau_0(s,t)=
\sum_{\substack{d\mid \Delta(s,t)\\ d\mid W^\infty}}1, 
\qquad 
\tau_i(s,t)=
\sum_{\substack{d\mid \Delta_i(s,t) \\ \gcd(d,W)=1}}
1, \quad (1\leq i\leq m),
\end{equation}
and 
\begin{equation}\label{eq:tripe}
r_i(s,t)=
\sum_{\substack{d\mid \Delta_i(s,t) \\ \gcd(d,W)=1}}
\l(
\frac{G_i(s,t)}{d} 
\r), \quad
(m< i\leq n).
\end{equation}
We put
\begin{equation}\label{eq:bacon}
\fS(s,t)=
\tau_0(s,t)
\prod_{i=1}^m
\tau_i(s,t)
\prod_{i=m+1}^n
r_i(s,t), 
\end{equation}
for any $(s,t) \in \ZZp^2$. Note that $\fS(s,t)\geq 0$.  Our work so far shows that 
$$
N(B) \ll B
\sum_{\substack{(s,t ) \in \ZZp^2  \\  |s|, |t| \ll  \sqrt{B} \\ \Delta(s,t)\neq 0} } 
\frac{\fS(s,t)}
{|\Delta(s,t)|^{\frac{1}{3}}
\max\{|s|,|t|\}^{\frac{2}{3}}  
}.
$$ 
Since we are only interested in coprime integers $s,t$, 
there is a satisfactory  contribution of $O(B)$ to the right hand side from   
those vectors $(s,t)$ in which one of the components is zero.   
Hence, by symmetry, 
Theorem~\ref{th:upper} will follow from a bound of the shape 
\begin{equation}
\label{salberger}
\sum_{\substack{(s,t ) \in \ZZp^2\\ 1 \leq |s|\leq |t|\leq \sqrt{B}\\
\Delta(s,t)\neq 0}}
\frac{ 
\fS(s,t)
}
{|\Delta(s,t)|^{\frac{1}{3}}
|t|^{\frac{2}{3}}
}
\ll
(\log B)^{m+1},
\end{equation} 
since \eqref{eq:rank} implies that
$m+1=\rho-1$.

\subsection{Reduction to divisor sums}
For 
$\beta\in \CC$ and $x,y>0$ we let
\[
\c{V}=\left\{
(s,t) \in \R^2:
1\leq |s| \leq |t| \leq x,~
|s-\beta t | \leq y,~
\Delta(s,t) \neq 0
\right\}.
\] 
Consider the divisor function
\begin{equation}
\label{def:erdos}
D_\beta(x,y)=
\sum_{
(s,t) \in \c{V}\cap \ZZp^2
} 
\fS(s,t),
\end{equation}
where $\fS(s,t)$ is given by \eqref{eq:bacon}.
In this subsection we shall establish~\eqref{salberger} subject to the following bound for 
$D_\beta(x,y)$, whose proof will occupy the remainder of the paper. 

\begin{proposition}
\label{prop:nair 1}
Let $\beta \in \CC$, let $\eta\in (0,1)$
and  
assume that 
$x^\eta
\leq
y \leq x$.
Then 
$
D_\beta(x,y) 
\ll_{\beta,\eta}
xy
\l(\log x\r)^{m}.
$
\end{proposition}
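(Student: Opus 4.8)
The plan is to deduce Proposition~\ref{prop:nair 1} directly from the number-field Nair--Tenenbaum inequality of Lemma~\ref{t:NT}, after using Lemma~\ref{lem:reuss} to recast the $W$-coprime divisor sums $\tau_i$ and character sums $r_i$ that make up $\fS(s,t)$ as a product of pseudomultiplicative functions evaluated at the ideals $(b_is-\theta_it)\subset\fo_{K_i}$. First I would enlarge $w$ so that, for every $i$, each prime divisor of the integer $D_i=2b_iD_{L_i/K_i}\Delta_{\theta_i}\n_i\mathfrak{f}_i$ from \eqref{eq:D} (and of $\Delta_{\theta_i}$, of $b_i$, and of each resultant $\res(\Delta_i,\Delta_j)$) is at most $w$, and hence divides $W$; then parts (i)--(iii) of Lemma~\ref{lem:reuss} give, for all primitive $(s,t)$ with $\Delta(s,t)\ne0$,
\[
\prod_{i=1}^{m}\tau_i(s,t)\prod_{i=m+1}^{n}r_i(s,t)=\prod_{i=1}^{n}f_{i,W}\bigl((b_is-\theta_it)\bigr),
\]
where $f_i=\tau_{K_i}$ for the $m$ split indices $i\le m$ (these come from constant $G_i$, so $\psi_i$ is trivial) and $f_i(\fa)=\sum_{\fd\mid\fa,\ \fd\in\cP_i}\psi_i(\fd)$ for the $n-m$ non-split ones; as noted in \S\ref{s:lower}, each such $f_i$ is non-negative and lies in $\c{M}_{K_i}(2,B_i,\ve)$ for every $\ve>0$.

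It remains to dispose of the prefactor $\tau_0(s,t)$ of $\fS$, which is supported on the $W$-smooth part of $\Delta(s,t)$. Since $W$ involves only a bounded-in-$X$ set of primes and each $\Delta_i$ is separable, I expect $\tau_0$ to cost merely an implied constant. One writes $\tau_0(s,t)=\sum_{d\mid W^{\infty},\ d\mid\Delta(s,t)}1$, interchanges summation, and for each $d$ breaks the condition $\Delta(s,t)\equiv0\bmod d$ into the finitely many (per prime power) primitive residue classes it cuts out; running the argument below on each such class and summing the resulting convergent series of shape $\sum_{d\mid W^{\infty}}\tau(d)\rho_d/d^{2}$, where $\rho_d$ is the number of primitive roots of $\Delta$ modulo $d$, reduces everything to bounding
\[
\Sigma:=\sum_{(s,t)\in\c{V}\cap\ZZp^2}\prod_{i=1}^{n}f_{i,W}\bigl((b_is-\theta_it)\bigr).
\]

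Next I would confirm that $\c{V}$ is a legitimate input for Lemma~\ref{t:NT}: it has piecewise differentiable boundary, by construction it omits all zeros of $\Delta=\prod_i\Delta_i$, and elementary estimates give $\vol(\c{V})\le C_\beta xy$ (a strip of width $\asymp_\beta y$ inside $|t|\le x$ when $\beta\in\R$; an ellipse of area $\asymp_\beta y^{2}\le xy$ when $\beta\notin\R$), together with $\partial(\c{V})\ll_\beta x$ and $K_{\c{V}}\ll_\beta x\log x$. I then split into cases according to whether $\vol(\c{V})\ge K_{\c{V}}^{\eta/2}$. If not, then $\vol(\c{V})\ll_\beta x^{\eta/2+o(1)}$, and the trivial bounds $\fS(s,t)\ll_\ve x^{\ve}$ (from $|\Delta(s,t)|\ll x^{4}$) and $\#(\c{V}\cap\ZZ^2)\ll\vol(\c{V})+\partial(\c{V})+1\ll_\beta x$ already yield $D_\beta(x,y)\ll_{\beta,\ve}x^{1+\ve}\ll_{\beta,\eta}xy(\log x)^{m}$ once $\ve<\eta$ (using $y\ge x^{\eta}$). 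Otherwise $\c{V}$ is a regular region with $c_1=\eta/2$, and I apply Lemma~\ref{t:NT} to $\Sigma$ with $\c{R}=\c{V}$, $G=\ZZ^2$ (so $q_G=1$ and $\lambda_G=1$), $N=n$, and $\ve_i=\ve$ for each $i$; shrinking $\ve$ in terms of $\eta$ makes $\ve_0+\ve<\eta$, so that the secondary term contributes $\ll_\beta K_{\c{V}}^{1+\ve_0+\ve}\ll_\beta(x\log x)^{1+\ve_0+\ve}\ll_{\beta,\eta}xy(\log x)^{m}$ for large $x$.

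The substance is the main term, i.e.\ the Euler products $E_{f_i}(V;W)$ with $V=\vol(\c{V})$. For a split index, $\tau_{K_i}$ is submultiplicative with $\tau_{K_i}(\fp)=2$, so, since the degree-one prime ideals of $K_i$ carry logarithmic density $1$,
\[
E_{\tau_{K_i}}(V;W)=\exp\Bigl(2\sum_{\substack{\fp\in\cP_{K_i}^\circ\\ w<\n_i\fp\le V}}\tfrac{1}{\n_i\fp}\Bigr)\asymp(\log V)^{2}.
\]
For a non-split index, $f_i$ is submultiplicative with $f_i(\fp)=1+\psi_i(\fp)$, and the decisive point is that $\psi_i$ is a \emph{non-trivial} quadratic character --- this is exactly the hypothesis $G_i(\theta_i,1)\notin\Q(\theta_i)^2$ which holds for non-split fibres, making $L_i/K_i$ a genuine quadratic extension and hence $\sum_\fp\psi_i(\fp)/\n_i\fp$ convergent, so that $E_{f_i}(V;W)=\exp\bigl(\sum_{w<\n_i\fp\le V}(1+\psi_i(\fp))/\n_i\fp\bigr)\asymp\log V$: a single power of $\log V$, not two. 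Multiplying over $i$ gives $\prod_{i=1}^{n}E_{f_i}(V;W)\asymp(\log V)^{2m+(n-m)}=(\log V)^{m+n}$, so that the main term of Lemma~\ref{t:NT} is
\[
\ll\frac{V}{(\log V)^{n}}(\log V)^{m+n}=V(\log V)^{m}\ll_\beta xy(\log x)^{m},
\]
using $\vol(\c{V})\le C_\beta xy$ and $(\log\vol(\c{V}))^{m}\ll_\beta(\log x)^{m}$; this is the claimed bound for $\Sigma$, and hence for $D_\beta(x,y)$. The step I expect to be the main obstacle is precisely this accounting of Euler factors: it is the cancellation in the non-split symbols $(\frac{G_i}{\cdot})$, worth a full factor $\log V$ per non-split fibre, that replaces the naive exponent $n$ by the conjecturally correct $m=\rho-2$ (via \eqref{eq:rank}), and arranging each non-split $E_{f_i}$ to come out as $\asymp\log V$ rather than $(\log V)^2$ is the heart of the matter. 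The $\tau_0$ reduction and the treatment of degenerate shapes of $\c{V}$ are routine by comparison.
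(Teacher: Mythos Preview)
Your proposal is correct and follows essentially the same route as the paper: convert the $W$-coprime factors $\tau_i$ and $r_i$ into ideal divisor sums via Lemma~\ref{lem:reuss}, apply the Nair--Tenenbaum bound of Lemma~\ref{t:NT}, and read off the exponent $m$ from the Euler products (two logs per split index, one per non-split). The only difference is in packaging the $\tau_0$ factor: the paper organises the $W$-smooth divisors using Daniel's equivalence classes $\fA(q)$ and the associated lattices $G(\cA)$ of determinant $q$, then isolates this as the separate Proposition~\ref{prop:daniel} before summing $\sum_{q\mid W^\infty}\#\fA(q)/q\ll 1$; your ``convergent series of shape $\sum_{d\mid W^\infty}\tau(d)\rho_d/d^{2}$'' is aiming at the same thing but the exponent and the stray $\tau(d)$ are not quite right---with lattices of determinant $d$ one gets $\sum_{d\mid W^\infty}\#\fA(d)/d$, and with residue classes of determinant $d^2$ one gets $\sum_{d\mid W^\infty}\varrho^*(d)/d^2$, both of which collapse to the same convergent product over $p\le w$.
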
 

We proceed to show how \eqref{salberger} follows from Proposition \ref{prop:nair 1}.
Since  $\Delta(s,t)$ is separable,  it may  contain the polynomial factor $t$ at most once.
Therefore there exists
$c_0 \in \Q^*$
and pairwise unequal 
$\alpha_i,\alpha_j \in \overline{\Q}$
such that $\Delta(s,t)$ admits the factorisation
$
c_0 t
\prod_{i=1}^3
(s-\alpha_i t)$
or
$c_0 
\prod_{i=1}^4
(s-\alpha_i t)$
,
according to whether $t\mid \Delta(s,t)$ or not, respectively. 
Putting
\begin{equation}
\label{def:alpha}
\alpha
=
\frac{1}{2}
\min_{\substack{i,j,k\\ i\neq j}}\left\{
|\alpha_i-\alpha_j|, |\alpha_k|\right\}
,
\end{equation}
the set of integer pairs $(s,t)$ appearing in~\eqref{salberger}
can be partitioned according to whether or not $(s,t)$ belongs to the set
\[\c{A}=
\left\{
(s,t) \in \R^2:|s-\alpha_i t| \geq \alpha |t|,
\ \text{for all $i$}
\right\}.
\] 
If $(s,t)\in \cA$ then 
$\Delta(s,t)\gg |t|^4$ and it follows that
$$
\sum_{\substack{(s,t ) \in \cA\cap \ZZp^2\\ 1 \leq |s|\leq |t|\leq \sqrt{B}\\
\Delta(s,t)\neq 0}}
\frac{ 
\fS(s,t)
}
{|\Delta(s,t)|^{\frac{1}{3}}
|t|^{\frac{2}{3}}
}
\ll 
\sum_{\substack{(s,t ) \in \cA\cap \ZZp^2\\ 1 \leq |s|\leq |t|\leq \sqrt{B}\\
\Delta(s,t)\neq 0}}
\frac{ 
\fS(s,t)
}
{|t|^{2}
}.
$$
Breaking into dyadic intervals $T/2<|t|\leq T$ and applying Proposition \ref{prop:nair 1} with $x=y=T$ and $\beta=0$, we readily find that the right hand side is $O((\log B)^{m+1})$, which is satisfactory for \eqref{salberger}.

It remains to consider the contribution
to~\eqref{salberger} 
 from  $(s,t)\in \ZZp^2\setminus \cA$.
For each  $i$ we define
$$
S_i(B)=
\sum_{\substack{(s,t ) \in  \ZZp^2\\ 1 \leq |s|\leq |t|\leq \sqrt{B}\\
\Delta(s,t)\neq 0\\
|s-\alpha_i t|<\alpha |t|
}}
\frac{ 
\fS(s,t)
}
{|\Delta(s,t)|^{\frac{1}{3}}
|t|^{\frac{2}{3}}
}.
$$
It now suffices to prove $S_i(B)=O((\log B)^{m+1})$ for each $i$ and each $\alpha_i$.
If $(s,t)$ is counted by $S_i(B)$ then~\eqref{def:alpha} implies that 
for any $j\neq i$
we have 
\[
|s-\alpha_jt| \geq \frac{1}{2}
|\alpha_i-\alpha_j|
|t|,
\]
thus
$|\Delta(s,t)| \gg |t|^3 |s-\alpha_it|$ in $S_i(B)$.
Likewise, we obviously have the reverse inequality  
$|\Delta(s,t)| \ll |t|^3 |s-\alpha_it|$.

We begin by dealing with the contribution of pairs
$(s,t)$ with 
$|s-\alpha_i t|\geq 1$.
For given $S,T$ satisfying 
$1\leq S\ll T\ll \sqrt{B}$, 
the overall  contribution to $S_i(B)$ from 
elements $s,t$ such that 
$T/2<|t|\leq T$ and $S/2<|s-\alpha_i t|\leq S$ is seen to be
$$
\ll \frac{1}{S^{\frac{1}{3}} T^{\frac{5}{3}}} D_{\alpha_i} (T,S),
$$
in the notation of \eqref{def:erdos}. If $S\gg T^{\frac{1}{10}}$ then Proposition \ref{prop:nair 1} shows that this is
$$
\ll \frac{S^{\frac{2}{3}}(\log B)^{m}}{ T^{\frac{2}{3}}}.
$$
Summing over dyadic $S,T$ satisfying $T^{\frac{1}{10}}\ll S\ll T\ll \sqrt{B}$ gives an overall contribution 
$O((\log B)^{m+1})$.
On the other hand, if $S\ll T^{\frac{1}{10}}$, we take $\fS(s,t)\ll T^{\ve}$ for any $\ve>0$,  by the standard estimate for the divisor function, so that $D_{\alpha_i} (T,S)\ll ST^{1+\ve}$. Taking $\ve=\frac{1}{30}$, 
we therefore arrive at the contribution
$$
\ll \frac{S^{\frac{2}{3}}T^{\frac{1}{30}}}{ T^{\frac{2}{3}}}
\ll T^{-\frac{2}{3}+\frac{1}{10}},
$$
from this case.
Again, summing over 
dyadic $S,T$ satisfying $S\ll T^{\frac{1}{10}}$ and $1\ll T\ll \sqrt{B}$, this  shows that we have  an overall contribution 
$O(1)$, which is plainly satisfactory.

It remains to consider the contribution to $S_i(B)$ from integers $s,t$ for which 
$
|s-\alpha_i t| <1
$.
In fact for irrational $\alpha_i$ there are infinitely many 
pairs of coprime integers $s,t$ for which 
$|s-\alpha_i t|<|t|^{-1}$.
The  divisor bound
gives
$\fS(s,t)\ll |t|^{\frac{1}{10}}$,
which leads to 
the contribution 
\begin{equation}
\label{eq:towards1}
\ll 
\sum_{\substack{(s,t ) \in  \ZZp^2, ~\Delta(s,t)\neq 0\\ 1 \leq |s|\leq |t|\leq \sqrt{B}\\
|s-\alpha_i t|< 1
}}
\frac{ 
1
}
{|s-\alpha_i t|^{\frac{1}{3}}
|t|^{\frac{5}{3}-\frac{1}{10}}
}
\end{equation} 
to $S_i(B)$.
We now invoke a result of Davenport and Roth~\cite[Cor.~2]{wow-much-math},
which shows that $\#\mathcal{L}=O(1)$, where
$$
\c{L} =\left\{(s,t)\in \Zp^2:
\left|\alpha_i-\frac{s}{t}\right|<\frac{1}{ |t|^{2+\frac{1}{100}}}
\right\}.
$$
Moreover, the implied constant is effective and  only depends  on the coefficients 
of $\Delta(s,t)$. The contribution to 
\eqref{eq:towards1} from $\c{L}$ is therefore seen to be
$$
\sum_{\substack{(s,t ) \in  \c{L}, ~\Delta(s,t)\neq 0\\ 1 \leq |s|\leq |t|
}}
\frac{ 
1
}
{|s-\alpha_i t|^{\frac{1}{3}}
|t|^{\frac{5}{3}-\frac{1}{10}}
}\ll \c{L} \ll 1, 
$$
since $|s-\alpha_it|\gg |\Delta(s,t)| |t|^{-3}\gg |t|^{-3}$. On the other hand, the contribution to \eqref{eq:towards1}
 outside of $\c{L}$ is
$$
\ll 
\sum_{\substack{(s,t ) \in  \ZZp^2\setminus \c{L}\\ 1 \leq |s|\leq |t|\leq \sqrt{B}\\ |s-\alpha_i t|<1
}}
\frac{1}{|t|^{\frac{4}{3}-\frac{1}{10}-\frac{1}{300}}}
\ll 
\sum_{|t|\leq \sqrt{B}}
\frac{1}{|t|^{\frac{4}{3}-\frac{1}{10}-\frac{1}{300}}} \ll 1,
$$
since for given $t$ there are finitely many integers $s$ in the interval $|s-\alpha_i t|<1$.
This completes the deduction of \eqref{salberger} from Proposition~\ref{prop:nair 1}.

\subsection{Small divisors}

The function $\tau_0(s,t)$ in \eqref{eq:bacon} is concerned with the contribution to $\fS(s,t)$ from small primes 
 $p\leq w$. Our work in \S \ref{s:sausage} only applies to divisor sums supported away from small prime divisors. Hence we shall  begin by using the geometry of numbers to deal with the function $\tau_0(s,t)$, before handling the remaining factors  in $\fS(s,t)$.

Following Daniel \cite{daniel}, for any $a \in \N$
we call two vectors $\b{x} ,\b{y}\in \ZZ^2$ equivalent modulo $a$
if
\[
\gcd(\b{x},a)=
\gcd(\b{y},a)=1
\quad
\text{ and }
\quad
\Delta(\b{x})\equiv \Delta(\b{y})\equiv 0 \bmod{a},
\] 
and, moreover, there exists $\lambda \bmod{a}$ such that
$\b{x}\equiv \lambda \b{y} \bmod{a}$. 
The set of equivalence classes is denoted by $\fA(a)$
and the class elements as $\c{A}$.
Letting
\[
\varrho^*(a)=\#\left\{
(\sigma,\tau) \bmod{a}:
\gcd(\sigma,\tau,a)=1,~
\Delta(\sigma,\tau) \equiv 0 \bmod{a}
\right\},
\]
we find that 
$
\varrho^*(a)=
\phi(a)
\#\fA(a).
$
Moreover, we clearly have $$\varrho^*(a)\leq \phi(a)(
\rho_{\Delta(x,1)}(a)+
\rho_{\Delta(1,x)}(a)),$$
in the notation of \eqref{eq:def-rho}.
Since   $\Delta(s,t)$ is separable, 
it follows from  Huxley~\cite{huxley} that 
$\rho_{\Delta(x,1)}(a)\leq 4^{\omega(a)}|\disc(\Delta)|^{\frac{1}{2}}$, 
and similarly for 
$\rho_{\Delta(1,x)}(a)$. Hence
\begin{equation}
\label{eq:huxley}
\#\fA(a)=
\frac{\varrho^*(a)}{
\phi(a)}
\ll
4^{\omega(a)}.
\end{equation}

For each $(s,t)\in \c{V}\cap \ZZp^2$, write
\[
r(s,t)=
\prod_{i=1}^m
\tau_i(s,t)
\prod_{i=m+1}^n
r_i(s,t)
.\]
Then 
\begin{align*}
D_\beta(x,y)
&\leq 
\sum_{\substack{q \ll x^{4}\\ q \mid W^\infty }}
\sum_{\substack{(s,t) \in \c{V}\cap 
\ZZp^2\\ q \mid\Delta(s,t)}}
r(s,t)\\
&\leq 
\sum_{\substack{q \ll x^{4}\\ q \mid W^\infty }}
\sum_{\cA\in \mathfrak{A}(q)}
\sum_{\substack{(s,t) \in \c{V}\cap G(\cA)\cap\ZZp^2}}
r(s,t),
\end{align*}
where $G(\c{A})=\{\x\in \ZZ^2: \exists \lambda\in \ZZ~\exists\y\in \cA \text{ s.t. } \x\equiv \lambda \y\bmod{q}\}$ is the lattice  generated by the vectors in 
$\c{A}$. The determinant of this lattice is $q$.
We shall establish the  following result.

\begin{proposition}
\label{prop:daniel}
Let $\eta\in (0,1)$
and assume that 
$x^\eta \leq y \leq x$.
Then 
\[
\sum_{\substack{(s,t) \in \c{V}\cap G(\cA)\cap\ZZp^2}}
r(s,t) \ll_{\beta,\eta,N}
xy
\l(\frac{(\log x)^m}{q}+
\frac{1}{(\log x)^{N}}
\r)
,\]
for any $N>0$,
where the implied constant is independent of $q$.
\end{proposition}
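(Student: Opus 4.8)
The plan is to express $r(s,t)$ as a product of nonnegative multiplicative functions of ideals in the number fields $K_i=\Q(\theta_i)$ attached to the irreducible factors $\Delta_i$ of $\Delta$, and then to feed the resulting sum into the number-field Nair--Tenenbaum estimate of Lemma~\ref{t:NT}, with the lattice $G(\cA)$ playing the role of $G$. Throughout one takes $F_i=\Delta_i$ in the notation of \S\ref{s:sausage} and enlarges $w$ so that, for every $i$, the integer $D_i$ of \eqref{eq:D} divides $W$ and every prime factor of $b_i=\Delta_i(1,0)$ divides $W$. For $1\le i\le m$ the fibre above $\Delta_i$ is split, so one may take $G_i=1$, and the associated ray class character $\psi_i$ is trivial; for $m<i\le n$ the form $G_i$ satisfies the hypotheses of \S\ref{s:sausage} and yields a genuine quadratic character $\psi_i$ on ideals of $K_i$. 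Part~(iii) of Lemma~\ref{lem:reuss}, together with the remark following it in the split case, then gives, for $(s,t)\in\c{V}\cap\ZZp^2$,
$$
\tau_i(s,t)=\sum_{\substack{\fa\mid(b_is-\theta_it)\\ \gcd(\n_{K_i}\fa,W)=1}}\!\!1\ \ (i\le m),\qquad
r_i(s,t)=\sum_{\substack{\fa\mid(b_is-\theta_it)\\ \gcd(\n_{K_i}\fa,W)=1}}\!\!\psi_i(\fa)\ \ (m<i\le n),
$$
with $\tau_i,r_i$ as in \eqref{eq:offal}--\eqref{eq:tripe}. Setting $f_i(\fa)=\sum_{\fd\mid\fa}\psi_i(\fd)$ (so $\psi_i=\mathbf{1}$ and $f_i=\tau_{K_i}$ when $i\le m$), this reads $r(s,t)=\prod_{i=1}^{n}f_{i,W}\big((b_is-\theta_it)\big)$; each $f_i$ is nonnegative and multiplicative with $f_i\le\tau_{K_i}$, so $f_i\in\c{M}_{K_i}(2,B_i,\ve_i)$ for a suitable $B_i$ and any $\ve_i>0$, and $\tau_{K_i}$ is submultiplicative for $i\le m$.

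One then applies Lemma~\ref{t:NT} with the number of forms equal to $n$, with $G=G(\cA)$ (so $q_G=q$ and $\lambda_G\ge1$), and with $\c{R}=\c{V}$. A routine check confirms that $\c{V}$ is regular with $c_1=1+\tfrac{\eta}{2}$ and that $\vol(\c{V})\ll_\beta xy$ (for bounded $x$ the Proposition is trivial), and one may assume $q\ll x^4$ since otherwise $G(\cA)\cap\c{V}=\emptyset$, whence $q\le\vol(\c{V})^{c_2}$ for $x$ large with a suitable $c_2=c_2(\eta)$. Moreover, since $q\mid W^\infty$ and $W=\prod_{p\le w}p$, every prime dividing $q$ divides $W$, so $f_{i,Wq_G}=f_{i,W}$ and $h_W^*(q_G)=1$. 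With $V=\vol(\c{V})$, Lemma~\ref{t:NT} therefore gives
$$
\sum_{(s,t)\in\c{V}\cap G(\cA)\cap\ZZp^2}\!\!r(s,t)\ \ll\ \frac{V}{(\log V)^n}\cdot\frac1q\prod_{i=1}^{n}E_{f_i}(V;W)\ +\ K_{\c{V}}^{1+\ve_0+\ve}.
$$

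It remains to bound the two terms. For $i\le m$ the submultiplicative definition gives $E_{f_i}(V;W)=\exp\!\Big(2\sum_{\fp}(\n_{K_i}\fp)^{-1}\Big)$, the sum over degree-one primes $\fp$ of $K_i$ with $w<\n_{K_i}\fp\le V$; since $\sum_{\n_{K_i}\fp\le V}(\n_{K_i}\fp)^{-1}=\log\log V+O(1)$ by the simple pole of $\zeta_{K_i}$ at $s=1$, this is $\asymp(\log V)^2$. For $m<i\le n$ the non-submultiplicative definition, the nonnegativity of $1+\psi_i(\fp)$, and the inequalities
$$
E_{f_i}(V;W)\ \le\ \prod_{w<\n_{K_i}\fp\le V}\Big(1+\frac{1+\psi_i(\fp)}{\n_{K_i}\fp}\Big)\ \le\ \exp\Big(\sum_{\n_{K_i}\fp\le V}\frac{1+\psi_i(\fp)}{\n_{K_i}\fp}\Big)\ \ll\ \log V
$$
(again over degree-one primes, with $\psi_i$ non-trivial so that $\sum_{\n_{K_i}\fp\le V}\psi_i(\fp)(\n_{K_i}\fp)^{-1}=O(1)$, i.e.\ $L(1,\psi_i)\neq 0$) give $\prod_{i=1}^{n}E_{f_i}(V;W)\ll(\log V)^{2m}(\log V)^{n-m}=(\log V)^{m+n}$. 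Hence the first term above is $\ll V(\log V)^m/q\ll_\beta xy(\log x)^m/q$. For the second, one chooses the $\ve_i$ (hence $\ve_0$, a fixed multiple of $\sum_i d_i\ve_i$) and $\ve$ small enough that $\ve_0+\ve<\eta/3$; then $K_{\c{V}}^{1+\ve_0+\ve}\ll_\beta(x\log x)^{1+\eta/3}\ll x^{1+\eta/2}$, and since $xy\ge x^{1+\eta}$ this is $\ll xy(\log x)^{-N}$ for every $N>0$ once $x$ is large. Combining the two bounds, and noting that the constants furnished by Lemma~\ref{t:NT} depend only on $\beta$, $\eta$, $N$ and on data determined by $X$, never on $q$, completes the argument.

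The step I expect to be the main obstacle is not a single estimate but the orchestration of the hypotheses of Lemma~\ref{t:NT}: arranging, by enlarging $w$, that $D_i\mid W$ so that Lemma~\ref{lem:reuss} converts the divisor sums into ideal sums; checking the regularity of $\c{V}$ uniformly in $\beta$ (which needs one to notice that when $|\beta|>1$ the region $\c{V}$ is confined to $|s|,|t|\ll_\beta y$, so that $K_{\c{V}}$ and $\vol(\c{V})$ scale together); and, most delicately, the precise evaluation of the local densities $E_{f_i}$, where it is crucial that each split fibre contributes a factor $(\log V)^{2}$ while each non-split fibre contributes only $(\log V)^{1}$, so that the exponents combine to exactly $m+n$ and the main term has the asserted size $xy(\log x)^m/q$ rather than anything larger.
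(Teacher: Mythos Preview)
Your proposal is correct and follows essentially the same route as the paper's own proof in \S4.4: convert $r(s,t)$ to products of ideal divisor sums via Lemma~\ref{lem:reuss}, apply Lemma~\ref{t:NT} with $\c{R}=\c{V}$ and $G=G(\cA)$, use $h_W^*(q)=1$ since $q\mid W^\infty$, and then compute $E_{f_i}\asymp(\log V)^2$ for split fibres and $E_{f_i}\ll\log V$ for non-split fibres to obtain the main term $xy(\log x)^m/q$, with the secondary term $K_\c{V}^{1+\ve_0+\ve}$ absorbed into $xy(\log x)^{-N}$ by taking the $\ve_i$ small. Your treatment is in fact slightly more careful than the paper's in two places: you correctly use the non-submultiplicative form of $E_{f_i}$ for $i>m$ (the paper writes the submultiplicative expression there, though the resulting bound is the same), and you flag the geometry of $\c{V}$ when $|\beta|>1$ or $\Im\beta\neq0$, which the paper glosses over.
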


We now show how  Proposition \ref{prop:nair 1} follows from this result. 
Employing  \eqref{eq:huxley}, we deduce that 
\[
D_\beta(x,y)
\ll_{\beta, \eta,N}
xy
(\log x)^m 
\sum_{\substack{q \ll x^{4}\\ q \mid W^\infty }}
\frac{ 4^{\omega(q)}}{q}
+
\frac{xy}{(\log x)^{N}}
\sum_{\substack{q \ll x^{4}\\ q \mid W^\infty }}
4^{\omega(q)}.
\]
The first sum is $
\ll
(\log w)^{4}
\ll 1$.  On the other hand, the second sum is
\[
\leq
\prod_{p \leq w} \l(16\log x+O(1)\r)
\ll
(\log x)^{\pi(w)}.
\] Choosing $N=\pi(w)$, we therefore conclude the deduction of
Proposition~\ref{prop:nair 1} from  Proposition~\ref{prop:daniel}.

\subsection{The final push}

The aim of this subsection is to prove Proposition~\ref{prop:daniel}.
Recall from \eqref{eq:ham} that we have a factorisation 
\[
\Delta(s,t)=\prod_{i=1}^m \Delta_i(s,t)
\prod_{i=m+1}^n \Delta_i(s,t),
\]
where each $\Delta_i\in \ZZ[s,t]$ is irreducible  and 
the fibre above the closed point $\Delta_i$  is split if and only if $i\leq m$. 
We  now want to bring into play the work in \S \ref{s:sausage}, in order to transform the sum in Proposition \ref{prop:daniel} into one that can be handled by Lemma  \ref{t:NT}.

Let $i\in \{1,\dots,n\}$.  Recall from \eqref{eq:offal} and \eqref{eq:tripe} that we are interested in the divisor sum
$$
\sum_{\substack{d\mid \Delta_i(s,t) \\ \gcd(d,W)=1}}
\l(
\frac{G_i(s,t)}{d}
\r),
$$
where $G_i(s,t)\in \ZZ[s,t]$ is a  form  of even degree (and we allow $G_i(s,t)$ to be identically equal to $1$).
This is exactly of the form considered in \eqref{eq:def-hW}.
Let  $b_i=\Delta_i(1,0)\in \ZZ$ and  suppose for the moment that $b_i\neq 0$. 
As previously, let $\theta_i$ be a root of the polynomial $\tilde\Delta_i(x,1)$, in the notation of 
\eqref{eq:burger}, and write $K_i=\QQ(\theta_i)$.  Let $\fo_i$ denote the ring of integers of $K_i$. We enlarge $w$ to ensure that $w>2b_i D_{L_i/K_i} \Delta_{\theta_i}$, where 
$\Delta_{\theta_i}$ is given by \eqref{eq:tesco} and 
$L_i=K_i(\sqrt{G_i(b_i^{-1}\theta_i,1)})$.  
Thus 
$$
[L_i:K_i]=\begin{cases}
1 &\text{ if $i\leq m$,}\\
2 &\text{ if $i> m$.}
\end{cases}
$$
Next, let $\psi_i$ be the quadratic Dirichlet  character constructed in \S \ref{s:sausage} (taking $\psi_i=1$ when $G_i(s,t)$ is identically $1$).
Let $\n_i$ denote the ideal norm in $K_i$. 
Then it follows from part (iii) of Lemma \ref{lem:reuss} that 
for any $(s,t)\in \ZZp^2$ such that $\Delta_i(s,t) \neq 0$, we have 
\begin{equation}\label{eq:parsons nose}
\sum_{\substack{d\mid \Delta_i(s,t) \\ \gcd(d,W)=1}}
\l(
\frac{G_i(s,t)}{d}
\r)=
\sum_{\substack{\fa\mid (b_is-\theta_i t)\\ \gcd(\n_i\fa,W)=1}} \psi_i(\fa).
\end{equation}
Moreover, 
if  $\cP_i^\circ$, $
\cP_i$ are defined  
as in \eqref{eq:span-0} and \eqref{eq:span}, respectively,    
then part (i) of Lemma \ref{lem:reuss} implies that $\fa\in \cP_i$ for any 
$\fa\mid (b_is-\theta_i t)$ such that $\gcd(\n_i\fa,W)=1$.

Suppose now that $b_i=0$, so that  $\Delta_i(s,t)=c t$ for some non-zero $c \in \ZZ$. We enlarge 
 $w$ to ensure that $w>c$.  In this case we have 
$$
\sum_{\substack{d\mid \Delta_i(s,t) \\ \gcd(d,W)=1}}
\l(
\frac{G_i(s,t)}{d}
\r)
=
\sum_{\substack{d\mid  t\\ \gcd(d,W)=1}}
\l(
\frac{G_i(s,t)}{d}
\r)
=
\sum_{\substack{d\mid  t\\ \gcd(d,W)=1}}
\l(
\frac{G_i(1,0)}{d}
\r),
$$
since $G_i$ has even degree and $(s,t)\in \ZZp^2$.
But this is of the shape  \eqref{eq:parsons nose}, with 
$b_i=0$, $\theta_i=1$, $K_i=\Q$,  and 
$\psi_i(d)=(
\frac{G_i(1,0)}{d}
)$.

Let $i\in \{1,\dots,n\}$ and let  $\fc\subset \fo_i$ be an integral ideal. 
We define multiplicative functions $\ft_i,\fr_i\in \c{M}_{K_i}$, in the notation of \S \ref{s:Mk},
via 
$$
\ft_i(\fc)=
\sum_{
\substack{\fa\in \cP_i\\ 
 \fa\mid \fc}}
1, \quad (1\leq i\leq m),
$$
and 
$$
\fr_i(\fc)=
\sum_{
\substack{\fa\in \cP_i\\ 
 \fa\mid \fc}}
\psi_i(\fa),
\quad (m< i\leq n).
$$
It follows that 
$$
r(s,t) =
\prod_{i=1}^m
\ft_{i,W}(b_i s-\theta_i t)
\prod_{i=m+1}^n
\fr_{i,W}(b_i s-\theta_i t)
$$
in Proposition~\ref{prop:daniel}, for 
any $(s,t)\in \ZZp^2$.

We are now in a position to apply Lemma \ref{t:NT} with $\c{R}=\c{V}$,
$G=G(\c{A})$ and $q_G=q$.
In particular 
it follows that 
$$
xy\ll V=\vol(\c{R}) \ll xy \quad \text{ and } \quad 
x \log x\ll K_{\c{R}}\ll  x \log x.
$$
According to the statement of Proposition \ref{prop:daniel}, we are given $\eta\in (0,1)$ and 
$x,y$ such that $x^\eta\leq y\leq x$. Thus  $\c{R}$ is regular. Since $q\ll x^4$, it therefore follows that all the hypotheses of Lemma~\ref{t:NT} 
are met with each $\ve_i>0$ being arbitrarily small.
 On enlarging $w$ suitably, we deduce that 
\begin{align*}
\sum_{\substack{(s,t) \in \c{V}\cap G(\cA)\cap\ZZp^2}}
r(s,t) 
\ll_{\eta,  W}~&
\frac{xy}{(\log x)^n}
\frac{h_W^*(q)}{q}
\prod_{i=1}^m
E_{\ft_i}(x^2;1)
\prod_{i=m+1}^n
E_{\fr_i}(x^2;1)\\
&+x^{1+\frac{\eta}{2}},
\end{align*}
Note  that $h_W^*(q)=1$, since $q\mid W^\infty$.
Moreover, since $x^{1+\frac{\eta}{2}}\ll_N xy(\log x)^{-N}$, for any $N>0$, the second term here is plainly satisfactory for Proposition \ref{prop:daniel}.

Finally, we have 
$$
E_{\ft_i}(z;1)
=\exp\l(
\sum_{\substack{\n_i
\fp \leq z
\\
\fp\in \c{P}_i^\circ 
}}\frac{\ft_i(\fp)}{\n_i\fp}\r)= 
\exp\l(
\sum_{\substack{\n_i
\fp \leq z
\\
\fp\in \c{P}_i^\circ 
}}\frac{2}{\n_i\fp}\r)\ll (\log z)^{2},
$$
for $i\in \{1,\dots,m\}$, 
and
$$
E_{\fr_i}(z;1)
=\exp\l(
\sum_{\substack{\n_i
\fp \leq z
\\
\fp\in \c{P}_i^\circ
}}\frac{\fr_i(\fp)}{\n_i\fp}\r)= 
\exp\l(
\sum_{\substack{\n_i
\fp \leq z
\\
\fp\in \c{P}_i^\circ
}}\frac{1+\psi_i(\fp)}{\n_i\fp}\r)\ll \log z,
$$
for $i\in \{m+1,\dots,n\}$.
Thus the first term makes the overall contribution
$$
\ll
\frac{xy(\log x)^{m}}{q},
$$
which thereby completes the proof of Proposition \ref{prop:daniel}.

\end{document}